 \newtheorem{thm}{Theorem}[section]
 \newtheorem{cor}[thm]{Corollary}
 \newtheorem{lem}[thm]{Lemma}
 \newtheorem{prop}[thm]{Proposition}
 \newtheorem{defn}[thm]{Definition}
 \newtheorem{ex}[thm]{Example}
 \newtheorem{rem}[thm]{Remark}
 \newcommand{\Hom}{\mathrm{Hom}}
\title{Homologically smooth connected cochain DGAs}
\author{X.-F. Mao}
\address{Department of Mathematics, Shanghai University, Shanghai, China, 200444}
\address{Newtouch center for Mathematics of Shanghai University, Shanghai, China, 200444}
\email{xuefengmao@shu.edu.cn}
\date{}
\begin{document}
 \def\abstactname{abstract}
\begin{abstract}
Let $\mathscr{A}$ be a connected cochain DG algebra such that $H(\mathscr{A})$ is a Noetherian graded algebra. We give some criteria for $\mathscr{A}$ to be homologically smooth in terms of the singularity category, the cone length of the canonical module $k$ and the global dimension of $\mathscr{A}$. For any cohomologically finite DG $\mathscr{A}$-module $M$, we show that it is compact when $\mathscr{A}$ is homologically smooth. If $\mathscr{A}$ is in addition Gorenstein, we get
 $$\mathrm{CMreg}M = \mathrm{depth}_{\mathscr{A}}\mathscr{A} + \mathrm{Ext.reg}\, M<\infty,$$
where
 $\mathrm{CMreg}M$ is the Castelnuovo-Mumford regularity of $M$, $\mathrm{depth}_{\mathscr{A}}\mathscr{A}$ is the depth of $\mathscr{A}$ and $ \mathrm{Ext.reg}\, M$ is the Ext-regularity of $M$.
\end{abstract}

\subjclass[2010]{Primary 16E10,16E45,16W50,16E65}


\keywords{homologically smooth, DG algebra, cone length, global dimension, Castelnuovo-Mumford regularity}
\maketitle

\section*{introduction}
Over the past two decades, the introduction and application of DG
homological methods and techniques have been one of the main areas
in homological algebra. In DG homological algebra, the homologically smoothness of a DG algebra plays a similar important role as
the regularity of an algebra does in the homological ring theory.
The research on this fundamental property of DG algebras have attracted many people's interests.
 In \cite{HW1},  He-Wu introduced the concept of
Koszul DG algebras, and obtained a DG version of the Koszul duality for Koszul, homologically smooth and Gorenstein DG algebras.
The author and Wu \cite{MW2} proved that any homologically
smooth connected cochain DG algebra $\mathscr{A}$ is cohomologically unbounded
unless $\mathscr{A}$ is quasi-isomorphic to the simple algebra $k$. And it was
proved that the $\mathrm{Ext}$-algebra
 of a homologically smooth DG algebra $\mathscr{A}$
is Frobenius if and only if both $\mathscr{D}^b_{lf}(\mathscr{A})$ and
$\mathscr{D}^b_{lf}(\mathscr{A}\!^{op})$ admit Auslander-Reiten triangles.  In \cite{Sh}, Shklyarov developed
a Riemann-Roch Theorem for homologically smooth DG algebras.
Besides these, some important classes of DG algebras are homologically smooth. For example, Calabi-Yau DG algebras introduced by Ginzburg in \cite{Gin} are homologically smooth by definition. Especially, non-trivial Noetherian DG down-up algebras and DG free algebras generated by two degree $1$ elements are  Calabi-Yau DG algebras by \cite{MHLX} and \cite{MXYA}, respectively. Moreover, there is a construction called `Calabi-Yau completion' \cite{Kel2} which produces a canonical Calabi-Yau DG algebra from a homologically smooth DG algebra.

One sees from above that it is  meaningful to study homologically smooth DG algebras thoroughly.
A feasible way to study an algebra is via various homological
invariants of the modules on them.  There have been many
kinds of invariants on DG module since Appasov's PhD thesis \cite{Apa},
where he defined homological dimensions of DG
modules from both resolutional and functorial
points of view. Frankild and J$\o$rgensen \cite{FJ} introduced and studied
$k$-projective dimension and $k$-injective dimension for DG modules
over a local chain DG algebra. Later, Yekutieli-Zhang \cite{YZ}
introduced projective dimension $\mathrm{proj.dim}_{\mathscr{A}}M$ and flat
dimension $\mathrm{flat.dim}_{\mathscr{A}}M$ for a DG module $M$ over a
homologically bounded DG algebra $\mathscr{A}$. Any one of these
invariants for DG modules can be seen as a generalization of the
corresponding classical homological dimensions of modules over a
ring. However, it seems that none of them can be used to define a
finite global dimension of a DG algebra. Inspired from the
definition of free class for differential modules over a commutative
ring in \cite{ABI},  the invariant DG free class for semi-free DG
modules was introduced in \cite{MW3}. Recall that the DG free class a semi-free DG $\mathscr{A}$-module
is defined to be the shortest length of all its strictly increasing semi-free filtrations. For any DG $\mathscr{A}$-module,
the least DG free classes of all its semi-free
resolutions is called cone length.  This invariant of DG modules plays a similar role as
projective dimension of modules does in homological ring theory.
It is well known in homological ring theory that the projective dimension of a module over a local ring is equal to the length of its minimal projective resolution.  In this paper, we prove the following theorem (see Theorem \ref{mindggf}).
\\
\begin{bfseries}
Theorem \ A.
\end{bfseries}
Let $M$ be an object in $\mathscr{D}^{+}(\mathscr{A})$ such that $\mathrm{cl}_{\mathscr{A}}M<\infty$. Then there is a minimal semi-free resolution $G$ of $M$ such that $\mathrm{DGfree.class}_{\mathscr{A}}G=\mathrm{cl}_{\mathscr{A}}M$.

In
\cite{Jor1}, J$\o$rgensen put forward a question on how to define
global dimension of DG algebras. As explained in \cite{MW3}, it is
reasonable to some degree to define left (resp. right) global
dimension of a connected DG algebra $\mathscr{A}$ to be the supremum of the
set of the cone lengthes of all DG $\mathscr{A}$-modules (resp.
$\mathscr{A}^{op}$-modules).
In
classical theory of homological algebra, it is well known that the
regular property of a commutative noetherian local ring can be
characterized by the finiteness of its global dimension and projective dimensions for all
finitely generated modules. By \cite{DGI}, we know a commutative noetherian local ring is regular if and only if every homologically finite complex is small in the derived category.  It is natural to ask whether we can get analogous results in DG setting. The following theorem  (see Theorem \ref{mainres}) confirm this positively.
\\
\begin{bfseries}
Theorem \ B.
\end{bfseries}
Let $\mathscr{A}$ be a connected cochain DG algebra such that $H(\mathscr{A})$ is a Noetherian graded algebra. Then the following statements are equivalent:

$(a)\,\, \mathscr{A}$ is homologically smooth.

$(b)\,\, \mathrm{cl}_{\mathscr{A}^e}\mathscr{A}<\infty$.

$(c)\,\, l.\mathrm{Gl.dim}\,\mathscr{A}<\infty$.

$(d)\,\, \mathscr{D}^c(\mathscr{A})=\mathscr{D}_{fg}(\mathscr{A}) $.

$(e)\,\, \mathscr{D}_{sg}(\mathscr{A})=0$.

$(f)\,\, \mathrm{cl}_{\mathscr{A}}k<\infty$.

$(g)\,\, k\in \mathscr{D}^c(\mathscr{A})$.

 Here, $\mathscr{D}_{fg}(\mathscr{A})$ and $\mathscr{D}^c(\mathscr{A})$ are the
full triangulated subcategories of the derived category  of DG $\mathscr{A}$-modules consisting
  of cohomologically finite DG $\mathscr{A}$-modules and compact DG $\mathscr{A}$-modules, respectively.
  Note that compact DG $\mathscr{A}$-modules are just small objects in $\mathscr{D}(\mathscr{A})$, and $\mathscr{D}_{sg}(\mathscr{A})$ is the singularity category $\mathscr{D}_{fg}(\mathscr{A})/\mathscr{D}^c(\mathscr{A})$.

 In \cite{Jor2}, J$\o$rgensen introduced Dwyer-Greenlees theory to differential graded homological algebra and developed a duality between $\mathscr{D}_{fg}(\mathscr{A})$ and $\mathscr{D}_{fg}(\mathscr{A}^{op})$ under the hypothesis \cite[Setup $4.1$]{Jor2} and the
  the additional condition that $H(\mathscr{A})$ is Noetherian with a balanced dualizing complex. Applying Theorem B, one sees that
  $\mathscr{D}_{fg}(\mathscr{A})=\mathscr{D}^c(\mathscr{A})$ when $\mathscr{A}$ is homologically smooth and $H(\mathscr{A})$ is Noetherian.
  This leads straightforwardly to the following duality:
\begin{align*}
\xymatrix{&\mathscr{D}_{fg}(\mathscr{A})\quad\quad\ar@<1ex>[r]^{R\Hom_{\mathscr{A}}(-,
\mathscr{A})}&\quad\quad
\mathscr{D}_{fg}(\mathscr{A}\!^{op})\ar@<1ex>[l]^{R\Hom_{\mathscr{A}\!^{op}}(-,\mathscr{A})}}.
\end{align*}

The Ext-regularities and Castelnuovo-Mumford regularities for DG modules were introduced
by J$\o$rgensen in \cite{Jor2}. Under the assumptions mentioned above, he obtained some interesting results on these two invariants for DG modules  in $\mathscr{D}_{fg}(\mathscr{A})$ (see \cite[Theorem 5.7]{Jor2}). In this paper, we show the following theorem (See Theorem \ref{formula}).
\\
\begin{bfseries}
Theorem \ C.
\end{bfseries}
Let $\mathscr{A}$ be a Gorenstein and homologically smooth connected cochain DG algebra such that $H(\mathscr{A})$ is a Noetherian graded algebra. Then for any object $M$ in $\mathscr{D}_{fg}(\mathscr{A})$, we have
$$\mathrm{CMreg}M = \mathrm{depth}_{\mathscr{A}}\mathscr{A}+ \mathrm{Ext.reg}\, M<\infty.$$

\section{preliminaries}
In this section, we review some basics on differential graded (DG
for short) homological algebra, whose main main novelty is the study
of the internal structure of a category of DG modules from a point
of view inspired by classical homological algebra. There is some
overlap here with the papers \cite{MW1, MW2,FHT2}. It is assumed that
the reader is familiar with basics on the theory of triangulated
categories and derived categories. If this is not the case, we refer
to  \cite{Nee, Wei} for more details on them.

Throughout the paper, $k$ is a fixed field.  Let $\mathscr{A}$ be a $\Bbb{Z}$-graded
$k$-algebra.  If there is a $k$-linear map $\partial_{\mathscr{A}}: \mathscr{A}\to \mathscr{A}$  of
degree $1$ such that $\partial_{\mathscr{A}}^2 = 0$ and
\begin{align*}
\partial_{\mathscr{A}}(ab) = \partial_{\mathscr{A}}(a)b + (-1)^{n|a|}a\partial_{\mathscr{A}}(b)
\end{align*}
for all graded elements $a, b\in \mathscr{A}$, then  $\mathscr{A}$ is called a cochain
differential graded $k$-algebra. We write DG for differential graded.
For any cochain DG $k$-algebra $\mathscr{A}$, its
underlying graded algebra obtained by forgetting the differential of
$\mathscr{A}$ is denoted by $\mathscr{A}^{\#}$. If $\mathscr{A}^{\#}$ is a connected graded
algebra, then $\mathscr{A}$ is called a connected cochain DG algebra.

For the rest of this paper, we denote $\mathscr{A}$ as a
connected DG algebra over a field $k$ if no special assumption is
emphasized. The cohomology graded algebra of $\mathscr{A}$ is the graded algebra $$H(\mathscr{A})=\bigoplus_{i\in \Bbb{Z}}\frac{\mathrm{ker}(\partial_{\mathscr{A}}^i)}{\mathrm{im}(\partial_{\mathscr{A}}^{i-1})}.$$
 For any cocycle element $z\in \mathrm{ker}(\partial_{\mathscr{A}}^i)$, we write $\lceil z \rceil$ as the cohomology class in $H(\mathscr{A})$ represented by $z$. It is easy to
check that $H(\mathscr{A})$ is a connected graded algebra if $\mathscr{A}$ is a
connected DG algebra. We denote $\mathscr{A}\!^{op}$ as the opposite DG
algebra of $\mathscr{A}$, whose multiplication is defined as
 $a \cdot b = (-1)^{|a|\cdot|b|}ba$ for all
homogeneous elements $a$ and $b$ in $\mathscr{A}$. For any connected cochain DG algebra $\mathscr{A}$, it has the following maximal DG
ideal $$\frak{m}: \cdots\to 0\to \mathscr{A}^1\stackrel{\partial_1}{\to}
\mathscr{A}^2\stackrel{\partial_2}{\to} \cdots \stackrel{\partial_{n-1}}{\to}
\mathscr{A}^n\stackrel{\partial_n}{\to}
 \cdots .$$ Obviously, the enveloping DG algebra $\mathscr{A}^e = \mathscr{A}\otimes \mathscr{A}\!^{op}$ of $\mathscr{A}$
is also a connected DG algebra with $H(\mathscr{A}^e)\cong H(\mathscr{A})^e$, and its
maximal DG ideal is $\frak{m}\otimes \mathscr{A}^{op} + \mathscr{A}\otimes
\frak{m}^{op}$.

 A left DG module over $\mathscr{A}$  (DG $\mathscr{A}$-module for
short) is a graded
$\mathscr{A}^{\#}$-module together with a linear $k$-map $\partial_M: M\to M$
of degree $1$ satisfying the Leibniz rule:
\begin{align*}
\partial_{M}(am)=\partial_{\mathscr{A}}(a)m + (-1)^{|a|}a\partial_{M}(m),
\end{align*}
for all graded elements $a \in \mathscr{A}, \, m \in M$. For any  left DG $\mathscr{A}$-module, it is well known that $H(M)$ is a left graded $H(\mathscr{A})$-module. We say that a DG $\mathscr{A}$-module is acyclic if $H(M)=0$.
A right DG module
over $\mathscr{A}$ is defined similarly. It is easy to check
that any right DG modules over $\mathscr{A}$ can be identified with DG
$\mathscr{A}^{op}$-modules. For any DG $\mathscr{A}$-module $M$ and $i\in \Bbb{Z}$, the $i$-th
suspension of $M$ is the DG $\mathscr{A}$-module $\Sigma^i M$ defined by
$(\Sigma^iM)^j = M^{j+i}$. If $m \in M^l,$ the corresponding element
in $(\Sigma^i M)^{l-i}$ is denoted by $\Sigma^i m$. We have
$a\Sigma^i m = (-1)^{|a|i}\Sigma^i (am) $ and $\partial_{\Sigma^i
M}(\Sigma^i m) = (-1)^{i}\Sigma^i \partial_M(m)$, for any graded
elements $a\in \mathscr{A}, m\in M$.

An $\mathscr{A}$-homomorphism $f:M \to N$
of degree $i$ between  DG $\mathscr{A}$-modules $M$ and $N$ is a $k$-linear map of degree $i$ such that
\begin{align*}
f(am) = (-1)^{i\cdot|a|}af(m),\,\,\, \textrm{for all}\,\,\, a\in
\mathscr{A}, m\in M.
\end{align*}
Denote $\Hom_{\mathscr{A}}(M,N)$ as the graded vector space of all graded
$\mathscr{A}$-homomorphisms from $M$ to $N$. This is a complex with them
differential $\partial_{\Hom}$ defined by
\begin{align*}
\partial_{\Hom}(f)=\partial_{N}\circ f -
(-1)^{|f|}f\circ\partial_{M}
\end{align*}
for all $f\in \Hom_{\mathscr{A}}(M, N)$.
 A morphism of DG $\mathscr{A}$-modules
from $M$ to $N$ is an $\mathscr{A}$-homomorphism $f$ of degree $0$ such that
$\partial_{N}\circ f = f\circ \partial_{M}$. The induced map $H(f)$
of $f$ on the cohomologies is then a morphism of left graded $H(\mathscr{A})$-modules. If
$H(f)$ is an isomorphism, then $f$ is called a
quasi-isomorphism, which is denoted as $f: M
\stackrel{\simeq}{\to}N.$ Let $f$ and $g$ be two morphisms of DG $\mathscr{A}$-modules between $M$ and $N$. If there is an $\mathscr{A}$-homomorphism $\sigma: M\to N$ of degree $-1$ such that $f-g=\partial_N\circ \sigma + \sigma \circ \partial_M$, then we say that $f$ and $g$ are homotopic to each other and we write $f\sim g$.
  A DG $\mathscr{A}$-module $M$ is called homotopically trivial if $\mathrm{id}_M\sim 0$.
  A morphism $f: M\to N$ of DG $\mathscr{A}$-modules is called a homotopy equivalence if there is a morphism $h:N\to M$ such that $f\circ h \sim \mathrm{id}_N$ and $h\circ f\sim \mathrm{id}_M$. And $h$ is called a homotopy inverse of $f$. One sees easily that any homotopy equivalence is a quasi-isomorphism.

  A DG $\mathscr{A}$-module $P$ (resp. $I$) is called $K$-projective (resp. $K$-injective) if  the functor $\Hom_{\mathscr{A}}(P, -)$ (resp. $\Hom_{\mathscr{A}}(-,I)$)
preserves quasi-isomorphisms.  And a DG $\mathscr{A}$-module $F$ is called  $K$-flat if the functor $-\otimes_{\mathscr{A}} F$ preserves quasi-isomorphisms. A $K$-projective resolution (resp. $K$-flat resolution)
 of a DG $\mathscr{A}$-module $M$ is a
quasi-isomorphism $\theta: P \to M$, where $P$ is a $K$-projective (resp. $K$-flat) DG $\mathscr{A}$-module. Similarly, a $K$-injective resolution of $M$ is defined as a quasi-isomorphism $\eta: M\stackrel{\simeq}{\to} I$, where $I$ is a $K$-injective DG $\mathscr{A}$-module.
A DG $\mathscr{A}$-module is called DG free, if it is isomorphic to a
direct sum of suspensions of $\mathscr{A}$ (note it is not a free object in
the category of DG modules). Let $Y$ be a graded set, we denote
$\mathscr{A}^{(Y)}$ as the free DG module $\oplus_{y\in Y}\mathscr{A} e_y$, where
$|e_y|=|y|$ and $\partial(e_y)=0$. Let $M$ be a DG $\mathscr{A}$-module. A
subset $E$ of $M$ is called a semi-basis if it is a free basis of
$M^{\#}$ over $\mathscr{A}^{\#}$ and has a decomposition $E =
\bigsqcup_{i\ge0}E_i$ as a union of disjoint graded subsets $E_i$
such that
\begin{align*}
\partial(E_0)=0 \,\,\, \textrm {and} \,\,\,\partial(E_u)\subseteq A
(\bigsqcup_{i<u}E_i)\, \,\,\textrm{for all}\,\, \,u >0.
\end{align*} A DG $\mathscr{A}$-module $F$ is called semi-free if there is a
sequence of DG submodules
\begin{align*}
0=F_{-1}\subset F_{0}\subseteq\cdots\subseteq F_{n}\subset\cdots
\end{align*}
such that $F = \cup_{n \ge 0} \,F_{n}$ and that each
$F_{n}/F_{n-1}=\mathscr{A}\otimes V(n)$ is a DG free $\mathscr{A}$-module. The
differential of $F$ can be decomposed as $\partial_F = d_0 + d_1 +
\cdots$, where $d_0 =
\partial_{\mathscr{A}}\otimes \mathrm{id}$ and each $d_i, i\ge 1$ is an $\mathscr{A}$-linear map
satisfying $d_i(V(l))\subseteq \mathscr{A}^{\#}\otimes V(l-i)$. It is easy to
check that a DG $\mathscr{A}$-module is semi-free if and only if it admits a
semi-basis.
A semi-free resolution of a DG $\mathscr{A}$-module $M$ is a
quasi-isomorphism $\varepsilon:F \to M,$ where $F$ is a semi-free
DG $\mathscr{A}$-module. Sometimes, we just say that $F$ is a semi-free
resolution of $M$. Semi-free resolutions play a similar important
role in DG homological algebra as ordinary free resolutions do in
homological ring theory.

Let $\mathscr{C}(\mathscr{A})$ be the category of DG $\mathscr{A}$-modules and morphisms of DG $\mathscr{A}$-modules.  The
derived category of $\mathscr{C}(\mathscr{A})$ is
denoted by $\mathscr{D}(\mathscr{A})$, which is constructed from
$\mathscr{C}(\mathscr{A})$ by inverting quasi-isomorphisms.  The right derived
functor of $\Hom$, is denoted by $R\Hom$, and the
 left derived functor of $\otimes$, is denoted by $\,{}^L\otimes$.
They can be computed via  K-projective, K-injective and K-flat resolutions of DG modules.
It is easy to check that $\Hom_{\mathscr{D}(\mathscr{A})}(M,N) = H^0(R\Hom_{\mathscr{A}}(M,N))$, for any objects $M, N$ in $\mathscr{D}(\mathscr{A})$.  A DG $\mathscr{A}$-module
is called compact, if the functor
$\Hom_{\mathscr{D}(\mathscr{A})}(M,-)$ preserves all coproducts in
$\mathscr{D}(\mathscr{A})$. By \cite[Theorem 5.3]{Kel1}, a DG $\mathscr{A}$-module $M$ is
compact, if and only if it is in the smallest triangulated thick
subcategory of $\mathscr{D}(\mathscr{A})$ containing ${}_{\mathscr{A}}\mathscr{A}$. To use the
language of topologists, a DG $\mathscr{A}$-module is compact if it can be
built finitely from ${}_{\mathscr{A}}\mathscr{A}$, using suspensions and distinguished
triangles.

 For any  DG $\mathscr{A}$-module $M$, it is called cohomologically finite if  $H(M)$ is a finitely generated $H(\mathscr{A})$-module.
 We say that $M$ is cohomologically locally finite if each $\dim_kH^i(M)<\infty$.
Let $\mathscr{D}_{fg}(\mathscr{A})$ and $\mathscr{D}_{lf}(\mathscr{A})$ be the
full triangulated subcategories of $\mathscr{D}(\mathscr{A})$ consisting
  of cohomologically finite DG $\mathscr{A}$-modules and cohomologically
locally finite DG $\mathscr{A}$-modules, respectively.
If the graded $H(\mathscr{A})$-module $H(M)$ is bounded below (resp. bounded above), we say that $M$ is cohomologically bounded below (resp. cohomologically bounded above).
Let $\mathscr{D}^{+}(\mathscr{A})$ (resp. $\mathscr{D}^{-}(\mathscr{A})$) be the
full triangulated subcategory of $\mathscr{D}(\mathscr{A})$ consisting of cohomologically bounded above (resp. cohomologically bounded below)
DG $\mathscr{A}$-modules. One sees easily that $\mathscr{D}^{-}(\mathscr{A})\cap \mathscr{D}^{+}(\mathscr{A})$ consists of DG $\mathscr{A}$-modules with bounded cohomology. It is natural to write
 $\mathscr{D}^b(\mathscr{A})=\mathscr{D}^{-}(\mathscr{A})\cap \mathscr{D}^{+}(\mathscr{A})$.
Obviously, we have inclusions $\mathscr{D}^c(\mathscr{A})\subseteq \mathscr{D}_{fg}(\mathscr{A})\subseteq \mathscr{D}^+(\mathscr{A}).$  Following \cite{Buc,Orl,Chen, Kel3},
the singularity category of $\mathscr{A}$ is defined as the Verdier quotient $\mathscr{D}_{sg}(\mathscr{A})=\mathscr{D}_{fg}(\mathscr{A})/\mathscr{D}^c(\mathscr{A})$.
It is confirmed by the results of \cite{Orl} that that singularity category of an algebra measures the degree to which the algebra is `singular'. One of the motivations of this paper is to seek a similar result for DG version.
\section{some basic lemmas}
In this section, we will give some fundamental lemmas on semi-free resolutions,  isomorphisms and compact DG modules.
Semi-free resolutions play an important role in DG homological
algebra as ordinary free resolutions do in homological ring theory. The following lemma indicates that any DG $\mathscr{A}$-module has a semi-free resolution.
\begin{lem}\label{homotopycom}\cite[Proposition 6.6]{FHT2}
For any DG algebra $\mathscr{A}$, each DG $\mathscr{A}$-module $M$ admits a semi-free resolution $f:
F\stackrel{\simeq}{\to} M$. If $g: G\stackrel{\simeq}{\to} M$ is
a second semi-free resolution, then there is a homotopy equivalence
$h: G\to F$ such that $g\sim f\circ h$.
\end{lem}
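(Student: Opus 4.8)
The plan is to prove the two assertions separately, the first by an explicit inductive construction and the second by exploiting the lifting property of semi-free modules.

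For existence, I would build $F$ as the union of a strictly increasing semi-free filtration $0=F_{-1}\subseteq F_0\subseteq\cdots$ together with compatible morphisms $f_n:F_n\to M$, proceeding by a standard ``surject-then-kill'' induction. At stage $0$, choose cocycles $\{z_\lambda\}$ of $M$ whose classes generate $H(M)$ as a graded $H(\mathscr{A})$-module and set $F_0=\mathscr{A}\otimes V(0)$ with $V(0)$ the graded space on symbols $e_\lambda$, $|e_\lambda|=|z_\lambda|$, $\partial(e_\lambda)=0$, and $f_0(e_\lambda)=z_\lambda$; then $H(f_0)$ is surjective. Given $f_n$ with $H(f_n)$ surjective, pick cocycles $w_\mu\in F_n$ representing a generating set of $\ker H(f_n)$; since $\lceil f_n(w_\mu)\rceil=0$, write $f_n(w_\mu)=\partial_M(c_\mu)$, adjoin free generators $e_\mu$ of degree $|w_\mu|-1$ with $\partial(e_\mu)=w_\mu$ and $f_{n+1}(e_\mu)=c_\mu$, and let $F_{n+1}/F_n=\mathscr{A}\otimes V(n+1)$ be DG free on the $e_\mu$. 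Passing to $F=\bigcup_n F_n$ and $f=\varinjlim f_n$, surjectivity of $H(f)$ is inherited, and injectivity follows because any cocycle in $\ker H(f)$ already lies in some $F_n$ and is made a coboundary in $F_{n+1}$ by construction; hence $f$ is a semi-free resolution.

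For the comparison statement, the central input is that a semi-free DG module is $K$-projective, i.e. $\Hom_{\mathscr{A}}(F,-)$ preserves quasi-isomorphisms. Granting this, given resolutions $f:F\to M$ and $g:G\to M$, applying the lifting property of the semi-free module $G$ along the quasi-isomorphism $f$ produces $h:G\to F$ with $f\circ h\sim g$, and symmetrically a map $h':F\to G$ with $g\circ h'\sim f$. Then $f\circ(h\circ h')\sim g\circ h'\sim f=f\circ\mathrm{id}_F$; since $H^0(\Hom_{\mathscr{A}}(F,f))$ is bijective (again by $K$-projectivity of $F$), this forces $h\circ h'\sim\mathrm{id}_F$, and likewise $h'\circ h\sim\mathrm{id}_G$. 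Thus $h$ is a homotopy equivalence with $g\sim f\circ h$, as required.

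The main obstacle is the technical heart behind the comparison, namely proving that semi-free implies $K$-projective, equivalently that $\Hom_{\mathscr{A}}(F,N)$ is acyclic whenever $N$ is acyclic. I would reduce this to the filtration: for a DG free module $\mathscr{A}^{(Y)}$ one has $\Hom_{\mathscr{A}}(\mathscr{A}^{(Y)},N)\cong\prod_{y\in Y}\Sigma^{-|y|}N$, which is acyclic when $N$ is; each subquotient $F_n/F_{n-1}$ is DG free, so an induction over $n$ using the short exact sequences $0\to F_{n-1}\to F_n\to F_n/F_{n-1}\to 0$ and their associated $\Hom$ long exact sequences shows each $\Hom_{\mathscr{A}}(F_n,N)$ is acyclic. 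The passage to the union $F=\bigcup_n F_n$ is the delicate point, requiring a $\varprojlim^{1}$ (Milnor sequence) argument to control $H(\Hom_{\mathscr{A}}(F,N))=H(\varprojlim_n\Hom_{\mathscr{A}}(F_n,N))$ and conclude that it vanishes. Once this vanishing is in hand, the lifting and uniqueness-up-to-homotopy arguments above are purely formal.
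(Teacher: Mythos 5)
Your proof is correct: the paper itself gives no argument for this lemma, quoting it directly from [FHT2, Proposition 6.6], and your reconstruction (existence via the inductive killing-of-cycles construction, comparison via $K$-projectivity of semi-free modules, which is the paper's Lemma \ref{homotopyproj}, likewise cited from FHT) is exactly the standard argument of that source. The only detail you leave implicit is that the Milnor $\varprojlim^1$ sequence requires the restriction maps $\Hom_{\mathscr{A}}(F_n,N)\to\Hom_{\mathscr{A}}(F_{n-1},N)$ to be surjective, which holds because each inclusion $F_{n-1}\subseteq F_n$ splits as graded $\mathscr{A}^{\#}$-modules, so that system is Mittag-Leffler and the two outer terms of the Milnor sequence vanish.
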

\begin{lem}\label{homotopyproj}\cite[Proposition 6.4]{FHT2}
For any DG algebra $\mathscr{A}$, if $F$ is a semi-free DG $\mathscr{A}$-module and $\eta: M\to N$ is
a quasi-isomorphism,  then $\Hom_{\mathscr{A}}(F,\eta)$ is a quasi-isomorphism.
Equivalently, the functor $\Hom_{\mathscr{A}}(F,-)$ maps quasi-trivial DG $\mathscr{A}$-modules to acyclic complexes.
Hence any semi-free DG $\mathscr{A}$-module is K-projective.
\end{lem}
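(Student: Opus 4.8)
The plan is to reduce both assertions to a single acyclicity statement and then prove it by climbing the semi-free filtration of $F$. First I would record why the two displayed formulations are equivalent. Since $F$ is semi-free, its underlying module $F^{\#}$ is free, hence projective, over $\mathscr{A}^{\#}$, so the additive functor $\Hom_{\mathscr{A}}(F,-)$ is exact on underlying graded modules and therefore commutes with the formation of mapping cones. A morphism $\eta\colon M\to N$ is a quasi-isomorphism exactly when its cone $C(\eta)$ is acyclic, and $\Hom_{\mathscr{A}}(F,C(\eta))\cong C(\Hom_{\mathscr{A}}(F,\eta))$; thus $\Hom_{\mathscr{A}}(F,\eta)$ is a quasi-isomorphism if and only if $\Hom_{\mathscr{A}}(F,C(\eta))$ is acyclic. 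It therefore suffices to prove that $\Hom_{\mathscr{A}}(F,L)$ is acyclic whenever $L$ is an acyclic (quasi-trivial) DG $\mathscr{A}$-module and $F$ is semi-free. The final sentence of the lemma is then immediate, as this is precisely the defining property of a $K$-projective module.

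Second, I would settle the DG free case. If $F$ is DG free, say $F\cong\bigoplus_{i}\Sigma^{n_i}\mathscr{A}$, then turning the coproduct into a product gives an isomorphism of complexes $\Hom_{\mathscr{A}}(F,L)\cong\prod_{i}\Sigma^{-n_i}L$. Each factor is a shift of the acyclic complex $L$, and since arbitrary products are exact in the category of $k$-vector spaces, cohomology commutes with products; hence $\Hom_{\mathscr{A}}(F,L)$ is acyclic.

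Third, I would run the induction along the filtration $0=F_{-1}\subseteq F_0\subseteq F_1\subseteq\cdots$ with $F=\bigcup_{n}F_n$ and each $F_n/F_{n-1}=\mathscr{A}\otimes V(n)$ DG free. For each $n$ the short exact sequence $0\to F_{n-1}\to F_n\to F_n/F_{n-1}\to 0$ of DG modules is degreewise split, because $F_n/F_{n-1}$ is free over $\mathscr{A}^{\#}$; applying $\Hom_{\mathscr{A}}(-,L)$ therefore yields a short exact sequence of complexes
$$0\to\Hom_{\mathscr{A}}(F_n/F_{n-1},L)\to\Hom_{\mathscr{A}}(F_n,L)\to\Hom_{\mathscr{A}}(F_{n-1},L)\to 0.$$
By the DG free case the left-hand term is acyclic, so the associated long exact cohomology sequence shows, by induction on $n$ starting from $F_{-1}=0$, that $\Hom_{\mathscr{A}}(F_n,L)$ is acyclic for every $n$, the surjection on the right being degreewise surjective at each stage.

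Finally I would pass to the colimit, which is where the genuine difficulty lies. Since $\Hom$ turns the colimit $F=\varinjlim_n F_n$ in its first variable into a limit, we have $\Hom_{\mathscr{A}}(F,L)\cong\varprojlim_n\Hom_{\mathscr{A}}(F_n,L)$, the inverse limit of the tower built above. Its transition maps are the surjections coming from the short exact sequences, so the tower is Mittag-Leffler and its ${\varprojlim}^1$ vanishes. The Milnor exact sequence
$$0\to{\varprojlim_n}^{1} H^{i-1}\Hom_{\mathscr{A}}(F_n,L)\to H^i\Hom_{\mathscr{A}}(F,L)\to\varprojlim_n H^i\Hom_{\mathscr{A}}(F_n,L)\to 0$$
then has both outer terms zero, forcing $H^i\Hom_{\mathscr{A}}(F,L)=0$ for all $i$. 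The delicate point throughout is exactly this control of the limit: the argument succeeds only because the semi-free structure guarantees degreewise surjectivity of the restriction maps $\Hom_{\mathscr{A}}(F_n,L)\to\Hom_{\mathscr{A}}(F_{n-1},L)$, and it is this Mittag-Leffler condition, rather than any single finite stage, that excludes spurious cohomology appearing in the passage to the infinite union.
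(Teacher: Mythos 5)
Your proof is correct, and it is worth noting at the outset that the paper itself gives no argument for this lemma: it simply cites \cite[Proposition 6.4]{FHT2}, so the honest comparison is with the classical proof found there. Your four steps are all sound: the reduction via mapping cones (which only needs additivity of $\Hom_{\mathscr{A}}(F,-)$, not exactness, though the stronger claim is also true here); the DG free case via $\Hom_{\mathscr{A}}\bigl(\bigoplus_i\Sigma^{n_i}\mathscr{A},L\bigr)\cong\prod_i\Sigma^{-n_i}L$ and exactness of products; the induction along the filtration, where the graded splitting of $0\to F_{n-1}\to F_n\to F_n/F_{n-1}\to 0$ (the quotient being $\mathscr{A}^{\#}$-free) is exactly what makes the $\Hom$ sequence short exact and the restriction maps degreewise surjective; and the Milnor sequence, whose ${\varprojlim}^1$ term vanishes because it is the ${\varprojlim}^1$ of a tower of zero groups, the degreewise surjectivity being what entitles you to the sequence in the first place. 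The classical argument runs differently at the last two steps: instead of filtering $\Hom_{\mathscr{A}}(F,L)$ and controlling an inverse limit, one fixes a semi-basis $E=\bigsqcup_u E_u$ of $F$ and, given a cocycle $f\in\Hom_{\mathscr{A}}(F,L)$ with $L$ acyclic, constructs a preimage $g$ directly: for $e\in E_0$, $f(e)$ is a cocycle in $L$, hence a coboundary, and one chooses $g(e)$ with $\partial_L g(e)=\pm f(e)$; for $e\in E_u$ one checks that $f(e)\mp g(\partial_F e)$ is again a cocycle (it involves only basis elements of lower filtration, where $g$ is already defined) and kills it likewise; finally $g$ extends $\mathscr{A}$-linearly because $F^{\#}$ is free on $E$. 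That construction makes the passage to the infinite union automatic---$g$ is defined once and for all on a basis---so no ${\varprojlim}^1$ control is needed, and the proof stays entirely elementary. Your route buys modularity (each finite stage is disposed of by a clean long-exact-sequence argument, and the only global input is standard tower calculus) and it never chooses a basis, using only the filtration; its cost is invoking the Milnor sequence, which is genuinely needed once you phrase the problem as computing cohomology of an inverse limit. Both proofs are complete, and your identification of the limit as the one delicate point is exactly right.
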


Let $\mathscr{A}$ be a connected cochain DG algebra. A semi-free DG $\mathscr{A}$-module $F$ is {\sl minimal} if
$\partial_{F}(F)\subseteq \mathrm{m}F$. The minimality of $F$
implies that both $\Hom_{\mathscr{A}}(F,k)$ and $k\otimes_{\mathscr{A}}F$ have vanishing
differentials. As to the existence of the minimal semi-free
resolution of a DG $\mathscr{A}$-module, we have the following lemma.

\begin{lem}\label{exist}\cite[Proposition 2.4]{MW1}
Let $\mathscr{A}$ be a connected cochain DG algebra.
If $M$ is a DG $\mathscr{A}$-module in $\mathscr{D}^+(\mathscr{A})$ with
$b=\inf\{j|H^j(M)\neq 0\}$,  then there exists a minimal semi-free
resolution $F_M$ of $M$ with $F_M^{\#}\cong \coprod\limits_{i\ge
b}\Sigma^{-i}(\mathscr{A}^{\#})^{(\Lambda^i)}$, where each $\Lambda^i$ is an
index set.
\end{lem}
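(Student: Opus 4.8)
The plan is to build $F_M$ directly by adjoining free generators one internal degree at a time, starting in degree $b$, and to read off minimality from the non-negative grading of $\mathscr{A}$. Since $M\in\mathscr{D}^{+}(\mathscr{A})$, the integer $b=\inf\{j\mid H^j(M)\neq 0\}$ is finite, which anchors the induction. As $H^{<b}(M)=0$, no positive-degree element of $H(\mathscr{A})$ can produce a class in the bottom piece $H^{b}(M)$, so a minimal $H(\mathscr{A})$-generating set of $H^{b}(M)$ is simply a $k$-basis; choosing cocycle representatives $\{z_\lambda\}_{\lambda\in\Lambda^{b}}\subseteq Z^{b}(M)$ I set $F_{(b)}=\bigoplus_{\lambda\in\Lambda^{b}}\Sigma^{-b}\mathscr{A}e_\lambda$ with $\partial e_\lambda=0$ and define $\varepsilon_{b}\colon F_{(b)}\to M$ by $e_\lambda\mapsto z_\lambda$. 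One checks directly that $H^{j}(\varepsilon_{b})$ is bijective for all $j\le b$.

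For the inductive step I maintain, after stage $n$, a DG free module $F_{(n)}$ with semibasis in internal degrees $b,\dots,n$ and a chain map $\varepsilon_n$ for which $H^{j}(\varepsilon_n)$ is bijective for every $j\le n$. To pass to stage $n+1$ I adjoin generators of internal degree $n+1$ of two kinds. The first kind are cocycle generators ($\partial=0$) sent to representatives of a $k$-basis of $\mathrm{coker}\,H^{n+1}(\varepsilon_n)$; these repair surjectivity in degree $n+1$ and create no new kernel, since their images are chosen to be nonzero classes of $H^{n+1}(M)$. The second kind are generators $w$ whose differentials $\partial w$ are cocycles representing a minimal $H(\mathscr{A})$-generating set of $\ker H^{n+2}(\varepsilon)$ (computed after the first kind have been adjoined); these prepare injectivity one degree higher. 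Injectivity in degree $n+1$ itself is already secured by the second-kind generators adjoined at stage $n$, whose differentials turn the offending degree-$(n+1)$ cocycles into coboundaries. Because degree-$(n+1)$ generators enlarge $F$ only in degrees $\ge n+1$, the groups $H^{j}$ for $j\le n$ and the bijectivity already obtained there are untouched, so the invariant propagates.

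Two features make the construction succeed. Minimality is automatic: at the moment a degree-$(n+1)$ generator is adjoined the module has generators only in degrees $\le n+1$, and since $\mathscr{A}$ vanishes in negative degrees, its differential lands in $F^{n+2}$ with all coefficients of positive degree, hence in $\mathfrak{m}F$; this propagates to $\partial_{F_M}(F_M)\subseteq\mathfrak{m}F_M$. Setting $F_M=\bigcup_{n\ge b}F_{(n)}$ and $\varepsilon=\varinjlim\varepsilon_n$, the internal-degree filtration exhibits a semibasis, so $F_M$ is semi-free with $F_M^{\#}\cong\coprod_{i\ge b}\Sigma^{-i}(\mathscr{A}^{\#})^{(\Lambda^{i})}$; and $\varepsilon$ is a quasi-isomorphism because in each fixed degree $j$ the module $F_{(n)}$ and the map $\varepsilon_n$ stabilise by stage $n=j$, where $H^{j}(\varepsilon_j)$ is already bijective. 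Throughout, the graded Nakayama lemma over the connected graded algebra $H(\mathscr{A})$ is what lets me choose the adjoined families minimally, so that each $\Lambda^{i}$ is a genuine (possibly infinite) index set.

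The step I expect to be the main obstacle is the inductive one: arranging the simultaneous, interleaved control of surjectivity in the current degree and injectivity in the next, and verifying that adjoining one kind of generator does not disturb the bijectivity already achieved in lower degrees. The non-negative grading of $\mathscr{A}$ is doing double duty here, both forcing minimality for free and guaranteeing that degree-$(n+1)$ generators cannot affect cohomology below degree $n+1$; and the hypothesis that $M$ is cohomologically bounded below, so that $b$ is finite, is precisely what makes the induction start and confines all generators to internal degrees $\ge b$.
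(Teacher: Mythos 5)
Your construction has a genuine gap, located exactly at the step you single out as the main obstacle: the claim that ``injectivity in degree $n+1$ is already secured by the second-kind generators adjoined at stage $n$.'' Adjoining a degree-$n$ generator $w$ with $\partial w=\zeta$ does kill the offending class $[\zeta]\in\ker H^{n+1}(\varepsilon)$, but it can simultaneously \emph{create} new kernel classes in the same degree $n+1$, namely classes of the form $[aw+x]$ with $a\in Z^1(\mathscr{A})$ and $a\zeta=\pm\partial x$. These classes are born only after $w$ is adjoined; your induction never revisits degree $n$, and no generator of degree $\ge n+1$ can kill a class in $H^{n+1}$ (its differential lives in degree $\ge n+2$, so it adds no boundaries in degree $n+1$), so such classes survive to the colimit. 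A concrete failure: take $\mathscr{A}=k[y]/(y^2)$ with $|y|=1$ and zero differential, and $M=k$, so $b=0$. Stage $0$ produces $e_0\mapsto 1$ and one second-kind generator $u_1$ of degree $0$ with $\partial u_1=ye_0$. Since $y^2=0$, the element $yu_1$ is a cocycle; it is not a boundary (the only degree-$1$ boundaries are multiples of $ye_0$), and it maps to $0$ because $M^1=0$. Hence $[yu_1]$ is a nonzero element of $\ker H^1(\varepsilon_0)$. At stage $1$ your recipe adds nothing (the cokernel in degree $1$ and the kernel in degree $2$ both vanish, as $F^{\ge 2}=0$), and all later stages are empty, so your output is $F_M=\mathscr{A}e_0\oplus\mathscr{A}u_1$ with $H^1(F_M)=k\neq 0=H^1(M)$: not a resolution. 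The actual minimal semi-free resolution here requires infinitely many generators $u_0,u_1,u_2,\dots$, \emph{all} of internal degree $0$, with $\partial u_i=yu_{i-1}$ --- something no ``one degree at a time, two waves per degree'' induction can produce.

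The missing idea is that kernel-killing must be iterated, in general infinitely often within a single internal degree, with the resolution obtained as a filtered colimit. This is how the proof the paper relies on (it cites \cite[Proposition 2.4]{MW1} rather than reproving the lemma) is organized: take $F(0)$ to be DG free on cocycles representing a minimal generating set of all of $H(M)$ (a $k$-basis of $H(M)/H^{\ge 1}(\mathscr{A})H(M)$, lifted), build $F(m+1)$ from $F(m)$ by adjoining generators whose differentials represent a minimal generating set of the whole kernel $\ker H(\varepsilon_m)$, and set $F_M=\bigcup_m F(m)$; the semi-free filtration indexes kernel-killing stages, not internal degrees. Surjectivity of $H(\varepsilon_m)$ is preserved at each stage, and injectivity holds in the colimit because cohomology commutes with filtered colimits: any kernel class is represented at some finite stage and dies at the next one. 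Minimality and the degree bound come from the observation that, with minimal (Nakayama-guided) choices, $\ker H(\varepsilon_m)$ lies in $H^{\ge 1}(\mathscr{A})\cdot H(F(m))$, so its classes have representatives in $\mathfrak{m}F(m)$ and sit in degrees $\ge b+1$, forcing all new generators into degrees $\ge b$. Your Nakayama and minimality observations, and your remark that generators of degree $\le n+1$ automatically have differentials in $\mathfrak{m}F$, are correct and reusable inside that framework; it is only the outer induction over internal degree that has to be abandoned.
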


\begin{lem}\label{comp}\cite[Proposition 3.3]{MW2}
Let $\mathscr{A}$ be a connected cochain DG algebra.
If $M$ is a DG $\mathscr{A}$-module in $\mathscr{D}^+(\mathscr{A})$,  then $M$ is compact if and only if $\dim_kH(k\otimes_{\mathscr{A}}M)=\dim_kH(R\Hom_{\mathscr{A}}(M,k))<\infty$
\end{lem}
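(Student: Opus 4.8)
My plan is to prove the biconditional by characterizing compactness through the minimal semi-free resolution furnished by Lemma \ref{exist}, and then translating the finiteness of the two $k$-vector spaces into the finiteness condition that makes such a resolution stop. Since $M\in\mathscr{D}^+(\mathscr{A})$, Lemma \ref{exist} gives a minimal semi-free resolution $F_M$ with $F_M^{\#}\cong\coprod_{i\ge b}\Sigma^{-i}(\mathscr{A}^{\#})^{(\Lambda^i)}$. Because $F_M$ is minimal, both $\Hom_{\mathscr{A}}(F_M,k)$ and $k\otimes_{\mathscr{A}}F_M$ have \emph{zero} differential, so their cohomologies are just the underlying graded spaces: concretely, $H^{i}(k\otimes_{\mathscr{A}}F_M)\cong k^{(\Lambda^i)}$ and dually for $\Hom_{\mathscr{A}}(F_M,k)$. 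This immediately identifies $\dim_k H(k\otimes_{\mathscr{A}}F_M)=\dim_k H(R\Hom_{\mathscr{A}}(M,k))=\sum_{i\ge b}\#\Lambda^i$, and the two quantities are simultaneously finite precisely when the total number of basis elements $\sum_i\#\Lambda^i$ is finite.

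Granting this, I would prove the two implications as follows. For the direction asserting that a finite-dimensional value forces compactness: if $\sum_i\#\Lambda^i<\infty$, then $F_M$ has a \emph{finite} semi-basis, hence its semi-free filtration $0=F_{-1}\subset F_0\subseteq\cdots$ stabilizes after finitely many steps, with each quotient $F_n/F_{n-1}$ a finite direct sum of suspensions of $\mathscr{A}$. Each step is a distinguished triangle building $F_n$ from $F_{n-1}$ and finitely many copies of $\Sigma^{j}\mathscr{A}$, so $F_M$ is built finitely from ${}_{\mathscr{A}}\mathscr{A}$ using suspensions and distinguished triangles. By the Keller criterion (\cite[Theorem 5.3]{Kel1}) quoted in the preliminaries, $F_M$—and hence $M\simeq F_M$ in $\mathscr{D}(\mathscr{A})$—is compact. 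For the converse: if $M$ is compact, it lies in the thick subcategory generated by ${}_{\mathscr{A}}\mathscr{A}$; since the functors $k\otimes_{\mathscr{A}}^{L}(-)$ and $R\Hom_{\mathscr{A}}(-,k)$ are triangulated and send ${}_{\mathscr{A}}\mathscr{A}$ to $k$ (a one-dimensional cohomology), a standard dévissage over the triangles and direct summands defining the thick closure shows that $\dim_k H(k\otimes_{\mathscr{A}}M)$ and $\dim_k H(R\Hom_{\mathscr{A}}(M,k))$ are finite. The equality of the two dimensions then follows either from this dévissage run in parallel, or more cleanly by computing both through the minimal resolution as above.

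The step I expect to be the main obstacle is the careful passage between \emph{compactness} (a categorical, thick-subcategory condition) and the \emph{finiteness of the semi-basis} of the minimal resolution. The subtle point is that compactness a priori only says $M$ lies in $\mathscr{D}^c(\mathscr{A})$, whereas I want to conclude that its \emph{specific} minimal resolution $F_M$ has finitely many generators; these need not be obviously equivalent, since an infinitely generated semi-free module could in principle be compact. The way around this is exactly minimality: because the differential of $F_M$ lands in $\mathfrak{m}F_M$, the graded space $H(k\otimes_{\mathscr{A}}F_M)$ records the semi-basis \emph{exactly}, with no cancellation, so finiteness of this cohomology is genuinely equivalent to finiteness of $\sum_i\#\Lambda^i$. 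I would therefore route both implications through the equalities $\dim_kH(k\otimes_{\mathscr{A}}M)=\dim_kH(R\Hom_{\mathscr{A}}(M,k))=\sum_{i\ge b}\#\Lambda^i$ established in the first paragraph, which makes the biconditional and the asserted equality of dimensions fall out together. The remaining bookkeeping—verifying that each filtration step is a distinguished triangle and that the derived functors behave correctly on the minimal resolution—is routine given Lemmas \ref{homotopycom}, \ref{homotopyproj}, and \ref{exist}.
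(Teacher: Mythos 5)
Your proposal is correct, and it is essentially the standard argument: the paper itself does not prove this lemma (it is quoted from \cite[Proposition 3.3]{MW2}), but your route --- minimality of $F_M$ forces the differentials of $k\otimes_{\mathscr{A}}F_M$ and $\Hom_{\mathscr{A}}(F_M,k)$ to vanish so that both dimensions count the semi-basis, finiteness of the semi-basis gives compactness via Keller's criterion, and the converse follows by d\'evissage over the thick subcategory generated by ${}_{\mathscr{A}}\mathscr{A}$ --- is exactly the characterization the paper records in Remark \ref{charcomp}. The only nitpick is that $\Hom_{\mathscr{A}}(-,k)$ turns the coproduct $(\mathscr{A}^{\#})^{(\Lambda^i)}$ into a product, so the literal equality $\dim_kH(R\Hom_{\mathscr{A}}(M,k))=\sum_i\#\Lambda^i$ holds only in the finite case; this does not affect the biconditional, since the two dimensions are still simultaneously finite and agree whenever finite.
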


\begin{rem}\label{charcomp}
Let $M$ be an object in $\mathscr{D}^+(\mathscr{A})$. By Lemma \ref{exist} and Lemma \ref{comp}, one sees easily that
$M$ is compact if and only if
it admits a minimal
semi-free resolution $F_M$ which has
a finite semi-basis. To use the language of topologists, a DG $\mathscr{A}$-module is compact if it
can be  built finitely from ${}_{\mathscr{A}}\mathscr{A}$, using suspensions and distinguished
triangles. Compact DG modules play the same role as finitely
presented modules of finite projective dimension do in ring theory. One sees that compact DG modules are just small objects in $\mathscr{D}(\mathscr{A})$.
\end{rem}

\begin{lem}\cite[Remark 20.1]{FHT2}\label{projfree}
Any bounded below projective graded module over a connected graded
algebra is a free graded module.
\end{lem}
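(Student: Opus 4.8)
The plan is to reduce the statement to the graded Nakayama lemma, imitating the classical argument that projective modules over a local ring are free. Write $R=\bigoplus_{n\ge 0}R_n$ for the connected graded algebra, so that $R_0=k$ and the augmentation ideal $\mathfrak{m}=\bigoplus_{n\ge 1}R_n$ is the unique maximal graded ideal with $R/\mathfrak{m}\cong k$. Let $P$ be a bounded below projective graded $R$-module, say $P_n=0$ for $n<n_0$. The first step I would carry out is to record the graded Nakayama lemma in the form actually needed: if $M$ is a bounded below graded $R$-module with $\mathfrak{m}M=M$, then $M=0$. This is a degreewise argument — if $M_n=0$ for $n<n_0$ but $M\neq 0$, then the lowest nonzero component satisfies $M_{n_0}=(\mathfrak{m}M)_{n_0}=\sum_{i\ge 1}R_iM_{n_0-i}=0$, since every $M_{n_0-i}$ with $i\ge 1$ vanishes, a contradiction.

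Next I would construct a free cover. The quotient $\bar P=P/\mathfrak{m}P\cong k\otimes_R P$ is a bounded below graded $k$-vector space; choosing a homogeneous $k$-basis of $\bar P$ and lifting it to homogeneous elements of $P$ produces a homomorphism $\varepsilon\colon F\to P$ from a graded free module $F$, which by construction induces an isomorphism $k\otimes_R F\stackrel{\cong}{\to}\bar P$. Since the chosen generators sit in degrees $\ge n_0$ and $R$ is non-negatively graded, $F$ is bounded below. Applying the right exact functor $k\otimes_R-$ to the cokernel $C$ of $\varepsilon$ then gives $k\otimes_R C=0$, i.e. $\mathfrak{m}C=C$; as $C$ is a quotient of $P$ it is bounded below, so the Nakayama lemma forces $C=0$ and hence $\varepsilon$ is surjective.

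Finally, because $P$ is projective the short exact sequence $0\to K\to F\stackrel{\varepsilon}{\to}P\to 0$ splits, yielding $F\cong P\oplus K$ with $K$ a bounded below direct summand of $F$. Applying $k\otimes_R-$ to this split sequence and using that $k\otimes_R\varepsilon$ is an isomorphism shows that $k\otimes_R K=0$, so $\mathfrak{m}K=K$ and a second application of Nakayama gives $K=0$. Therefore $\varepsilon$ is an isomorphism and $P\cong F$ is free. The only point demanding care is the verification of the boundedness hypothesis at each invocation of Nakayama; this is exactly where the bounded below assumption enters, and I do not expect any further obstacle, the result being essentially a formal consequence of connectedness.
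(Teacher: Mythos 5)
Your proof is correct and complete: the graded Nakayama lemma in the bounded-below form, the construction of a free cover inducing an isomorphism after applying $k\otimes_R-$, surjectivity of the cover, and the splitting argument killing the kernel are all carried out accurately, and you correctly note that bounded-belowness substitutes for finite generation at each invocation of Nakayama. The paper itself gives no proof of this lemma (it is quoted from F\'elix--Halperin--Thomas, Remark 20.1), and your argument is exactly the standard one underlying that reference, so there is nothing to add.
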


 Since any DG $\mathscr{A}$-module is a graded
$\mathscr{A}^{\#}$-module by forgetting its differential, we can easily get
the following lemma by the graded version of Nakayama Lemma.
\begin{lem}\label{naklem}\emph{(DG Nakayama Lemma)}
Let $\mathscr{A}$ be a connected cochain DG algebra.
If $M$ is a bounded below DG $\mathscr{A}$-module and $L$ is a DG
$\mathscr{A}$-submodule of $M$ such that $L+\frak{m}M=M$, then $L=M$.
\end{lem}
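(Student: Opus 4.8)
The plan is to forget the differential and deduce the statement from the ordinary graded Nakayama lemma for the connected graded algebra $\mathscr{A}^{\#}$. Since $\mathscr{A}$ is a connected cochain DG algebra, its underlying graded algebra $\mathscr{A}^{\#}$ is connected, so $\mathscr{A}^0=k$, $\mathscr{A}^{<0}=0$, and the underlying graded ideal $\frak{m}^{\#}$ of the maximal DG ideal $\frak{m}$ is precisely the augmentation ideal $\mathscr{A}^{\geq 1}=\bigoplus_{i\geq 1}\mathscr{A}^i$. Forgetting differentials turns $M$ into a bounded below graded $\mathscr{A}^{\#}$-module $M^{\#}$ and $L$ into a graded $\mathscr{A}^{\#}$-submodule $L^{\#}$, and the hypothesis $L+\frak{m}M=M$ becomes the identity $L^{\#}+\frak{m}^{\#}M^{\#}=M^{\#}$ of graded modules. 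Thus it suffices to prove the purely graded statement, after which $L^{\#}=M^{\#}$ gives $L=M$ as DG modules.

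First I would pass to the quotient. Set $N=M^{\#}/L^{\#}$, a graded $\mathscr{A}^{\#}$-module. The identity $L^{\#}+\frak{m}^{\#}M^{\#}=M^{\#}$ is equivalent to $\frak{m}^{\#}N=N$, so the claim reduces to showing that a bounded below graded $\mathscr{A}^{\#}$-module $N$ with $\frak{m}^{\#}N=N$ must vanish. Here $N$ is bounded below because it is a quotient of the bounded below module $M^{\#}$.

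The key step is a minimal degree argument. Suppose for contradiction that $N\neq 0$, and let $b=\inf\{\,i\mid N^i\neq 0\,\}$, which is a finite integer since $N$ is nonzero and bounded below. Taking the degree $b$ component of $\frak{m}^{\#}N=N$ gives
\begin{align*}
N^b=(\frak{m}^{\#}N)^b=\sum_{j\geq 1}\mathscr{A}^j N^{b-j}.
\end{align*}
But $N^{b-j}=0$ for every $j\geq 1$ by minimality of $b$, so the right hand side is zero, forcing $N^b=0$ and contradicting the choice of $b$. Hence $N=0$ and $L=M$.

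I do not expect any serious obstacle here; the whole content lies in the reduction to the graded case and the observation that boundedness below is exactly what powers the degree argument. The only point deserving care is the role of this hypothesis: without it the lowest nonzero degree need not exist and the lemma genuinely fails, just as the classical Nakayama lemma requires a finiteness or completeness condition. It is also worth recording that the same argument applies verbatim to right DG modules, that is, to DG $\mathscr{A}^{op}$-modules, so the lemma holds on both sides.
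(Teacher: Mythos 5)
Your proposal is correct and follows exactly the route the paper indicates: forget the differential and invoke the graded Nakayama lemma for bounded below modules over the connected graded algebra $\mathscr{A}^{\#}$ (the paper leaves the lemma without a written proof, remarking only that it follows this way). Your minimal-degree argument for the graded statement is the standard one and is sound.
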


\begin{lem}\label{homiso}
Let $\mathscr{A}$ be a connected cochain DG algebra.
Suppose that $F$ is a bounded below DG $\mathscr{A}$-module such that
$\partial_F(F)\subseteq mF$ and $F^{\#}$ is a projective
$\mathscr{A}^{\#}$-module. If a DG morphism $\alpha: F\to F$ is homotopic to
the identity morphism $\mathrm{id}_F$,  then $\alpha$ is an
isomorphism.
\end{lem}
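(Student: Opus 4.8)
The plan is to prove that $\alpha$ is bijective; since a bijective degree-zero morphism of DG modules automatically has a DG-module inverse (its set-theoretic inverse is again $\mathscr{A}$-linear and commutes with the differential), this immediately gives the asserted isomorphism. The starting point is to extract the one consequence of the homotopy that we actually need. Choose an $\mathscr{A}$-homomorphism $\sigma\colon F\to F$ of degree $-1$ with $\alpha-\mathrm{id}_F=\partial_F\circ\sigma+\sigma\circ\partial_F$. Because $\sigma$ is $\mathscr{A}$-linear it carries $\mathfrak{m}F$ into $\mathfrak{m}F$, and because $\partial_F(F)\subseteq\mathfrak{m}F$ we get both $\partial_F\circ\sigma(F)\subseteq\partial_F(F)\subseteq\mathfrak{m}F$ and $\sigma\circ\partial_F(F)\subseteq\sigma(\mathfrak{m}F)\subseteq\mathfrak{m}F$. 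Hence $(\alpha-\mathrm{id}_F)(F)\subseteq\mathfrak{m}F$, which says precisely that the induced map $\bar\alpha$ on $k\otimes_{\mathscr{A}}F=F/\mathfrak{m}F$ equals the identity.

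For surjectivity I would set $L=\alpha(F)$, which is a DG $\mathscr{A}$-submodule of $F$ because $\alpha$ is a morphism of DG modules. Every $x\in F$ can be written $x=\alpha(x)-(\alpha-\mathrm{id}_F)(x)$, and the second term lies in $\mathfrak{m}F$ by the previous paragraph, so $L+\mathfrak{m}F=F$. As $F$ is bounded below, the DG Nakayama Lemma (Lemma \ref{naklem}) forces $L=F$, i.e.\ $\alpha$ is surjective.

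Injectivity is where the projectivity hypothesis enters, and I expect it to be the main obstacle: a surjective endomorphism of a projective module need not be injective in general, so one must exploit the extra information $\bar\alpha=\mathrm{id}$. Let $K=\ker\alpha$; since $\alpha$ commutes with $\partial_F$, $K$ is a DG $\mathscr{A}$-submodule, and it is bounded below because $F$ is. Surjectivity of $\alpha$ yields a short exact sequence $0\to K\to F^{\#}\xrightarrow{\alpha}F^{\#}\to0$ of graded $\mathscr{A}^{\#}$-modules, and this splits because the target $F^{\#}$ is projective. Applying the additive functor $k\otimes_{\mathscr{A}}-$ to a split exact sequence keeps it split exact, so $0\to k\otimes_{\mathscr{A}}K\to k\otimes_{\mathscr{A}}F\xrightarrow{\bar\alpha}k\otimes_{\mathscr{A}}F\to0$ is exact. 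But $\bar\alpha=\mathrm{id}$ is injective, whence $k\otimes_{\mathscr{A}}K=0$, that is $\mathfrak{m}K=K$. A second application of Lemma \ref{naklem} to the bounded below module $K$ (with the zero submodule) gives $K=0$, so $\alpha$ is injective.

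Combining the two steps, $\alpha$ is a bijective morphism of DG $\mathscr{A}$-modules, hence an isomorphism. As an alternative to the splitting argument one could avoid projectivity altogether by observing that $\beta:=\mathrm{id}_F-\alpha$ satisfies $\beta^n(F)\subseteq\mathfrak{m}^nF$ by $\mathscr{A}$-linearity and induction; since $F$ is bounded below while $\mathfrak{m}^n$ raises internal degree by at least $n$, the operator $\beta$ is locally nilpotent, and then $\sum_{n\ge0}\beta^n$ is a well-defined DG endomorphism that inverts $\alpha$ directly. I would keep the Nakayama–splitting route as the primary argument, since it matches the stated hypotheses, and flag the local-nilpotence variant as a conceptual cross-check.
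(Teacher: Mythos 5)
Your proposal is correct and follows essentially the same route as the paper: reduce modulo $\mathfrak{m}$ to see $\bar\alpha=\mathrm{id}$, get surjectivity from the DG Nakayama Lemma, then use projectivity of $F^{\#}$ to split the kernel sequence and kill $\ker\alpha$ by a second Nakayama-type argument (the paper spells out the degree-infimum contradiction where you invoke Lemma \ref{naklem} with $L=0$, but these are the same argument). Your local-nilpotence cross-check via $\beta^n(F)\subseteq\mathfrak{m}^nF$ is also valid and is a nice observation that the projectivity hypothesis is not actually needed, though the paper does not take that route.
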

\begin{proof}
Since $\alpha\simeq \mathrm{id}_F$, there is a homotopy map $h: F\to
F$ such that $$\alpha -\mathrm{id}_F=h\circ \partial_F +
\partial_F\circ h.$$ Let $\overline{F}=k\otimes_{\mathscr{A}}F,
\overline{\alpha}=k\otimes_{\mathscr{A}} \alpha$ and $\overline{h}=k\otimes_{\mathscr{A}}h$.
Since $\partial_F(F)=\frak{m}F$, we have
$\overline{\alpha}=\mathrm{id}_{\overline{F}} + \overline{h}\circ
\partial_{\overline{F}}+\partial_{\overline{F}}\circ
\overline{h}=\mathrm{id}_{\overline{F}}$. Hence
$F=\mathrm{im}(\alpha)+\frak{m}F$. By Lemma \ref{naklem}, we have
$\mathrm{im}(\alpha) = F$. Since $F^{\#}$ is a projective
$\mathscr{A}^{\#}$-module, the short exact sequence
$$0\to \mathrm{ker}(\alpha)\to F\stackrel{\alpha}{\to}F\to 0$$ is linearly split.
Note that a short exact sequence of DG $\mathscr{A}$-modules is called linearly split if it is split as a short exact sequence of graded $\mathscr{A}^{\#}$-modules.
Acting $k\otimes_{\mathscr{A}}-$ on this linearly split short exact sequence, gives a short exact sequence $$0\to k\otimes_{\mathscr{A}}\mathrm{ker}(\alpha)\to \overline{F}\stackrel{\overline{\alpha}}{\to}\overline{F}\to 0$$ of graded $k$-verctor spaces. Since $\overline{\alpha}$ is a monomorphism, we have $$\mathrm{ker}(\alpha)/\frak{m}\mathrm{ker}(\alpha) = k\otimes_{\mathscr{A}} \mathrm{ker}(\alpha) =0.$$ Suppose that $\mathrm{ker}(\alpha)\neq 0$, then $\mathrm{ker}(\alpha)$ is a bounded below DG $\mathscr{A}$-module since it is a DG $\mathscr{A}$-submodule of $F$. This implies that $\mathrm{ker}(\alpha)\neq \frak{m}\mathrm{ker}(\alpha)$. It contradicts with $\mathrm{ker}(\alpha)/\frak{m}\mathrm{ker}(\alpha) = 0$. Hence $\mathrm{ker}(\alpha)=0$.
\end{proof}

\begin{lem}\label{homotopyinverse}

For any DG algebra $\mathscr{A}$ and morphism of DG $\mathscr{A}$-modules $f:M\to N$,  if there are DG morphisms $g: N\to M$ and $g':N\to M$ such that
$g'\circ f\sim \mathrm{id}_M$ and $f\circ g\sim \mathrm{id}_N$, then $f$ is a homotopy equivalence and $g$ is a homotopy inverse of $f$.
\end{lem}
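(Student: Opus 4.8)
The plan is to reduce the whole statement to showing that $g$ is a two-sided homotopy inverse of $f$. Since $f\circ g\sim \mathrm{id}_N$ is already among the hypotheses, the only thing that remains is to prove $g\circ f\sim \mathrm{id}_M$, and from this the conclusion that $f$ is a homotopy equivalence with homotopy inverse $g$ follows immediately from the definition. The guiding principle is the elementary categorical fact that, once an arrow possesses both a left inverse and a right inverse, these two inverses coincide; I intend to transport this to the setting where morphisms are identified up to the homotopy relation $\sim$.

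The one auxiliary fact I would first record is that $\sim$ is compatible with composition of morphisms of DG $\mathscr{A}$-modules: if $\alpha\sim\beta$ via a homotopy $\sigma$, meaning $\alpha-\beta=\partial_N\circ\sigma+\sigma\circ\partial_M$, then for a morphism $\gamma$ on the appropriate side one has $\gamma\circ\alpha\sim\gamma\circ\beta$ with homotopy $\gamma\circ\sigma$, and $\alpha\circ\gamma\sim\beta\circ\gamma$ with homotopy $\sigma\circ\gamma$. This is a direct computation: substituting the expression for $\alpha-\beta$ and using that a morphism $\gamma$ has degree $0$ and commutes with the differentials (so $\gamma\circ\partial_N=\partial_P\circ\gamma$ and $\partial_M\circ\gamma=\gamma\circ\partial_L$) exhibits $\gamma\circ\sigma$, respectively $\sigma\circ\gamma$, as the required homotopy, with no sign subtleties precisely because $\gamma$ sits in degree $0$.

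With this compatibility in hand the argument proceeds exactly as in the purely algebraic case. From $f\circ g\sim \mathrm{id}_N$ I would compose on the left with $g'$ to obtain $g'\circ f\circ g\sim g'$, and from $g'\circ f\sim \mathrm{id}_M$ I would compose on the right with $g$ to obtain $g'\circ f\circ g\sim g$; chaining these two relations by transitivity of $\sim$ yields $g\sim g'$. Consequently $g\circ f\sim g'\circ f\sim \mathrm{id}_M$, where the first homotopy uses compatibility of $\sim$ with right composition by $f$ and the second is one of the hypotheses. Combined with the assumed $f\circ g\sim \mathrm{id}_N$, this is exactly the statement that $g$ is a homotopy inverse of $f$, so $f$ is a homotopy equivalence.

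I do not expect a genuine obstacle here: the only content beyond formal manipulation is the compatibility of $\sim$ with composition, which is a routine verification rather than a hard step, and everything else is the standard observation that a morphism admitting both a one-sided left homotopy inverse and a one-sided right homotopy inverse becomes invertible in the homotopy category $K(\mathscr{A})$. The lemma is thus best viewed as the DG analogue of the fact that an arrow with a left inverse and a right inverse in any category is an isomorphism.
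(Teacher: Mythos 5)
Your proposal is correct and follows essentially the same route as the paper: both establish $g\sim g'$ by composing the two hypotheses appropriately and then conclude $g\circ f\sim g'\circ f\sim \mathrm{id}_M$. The only difference is that you explicitly verify the compatibility of $\sim$ with composition, which the paper uses silently.
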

\begin{proof}
By assumptions, we have $g'\sim g'\circ \mathrm{id}_N\sim g'\circ (f\circ g)=(g'\circ f)\circ g\sim \mathrm{id}_M\circ g=g$.
So $g\circ f \sim g'\circ f\sim \mathrm{id}_M$. Hence $f$ is a homotopy equivalence and $g$ is its homotopy inverse.

\end{proof}

\begin{lem}\label{homotopytrivial}
For any DG algebra $\mathscr{A}$ and any DG $\mathscr{A}$-module $M$,
the DG module $M$ is homotopically trivial if
and only if $H(\Hom_{\mathscr{A}}(M,M))=0$.
\end{lem}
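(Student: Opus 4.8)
The plan is to exploit the fact that the endomorphism complex $\Hom_{\mathscr{A}}(M,M)$ is not merely a complex but a DG algebra, with multiplication given by composition and unit $\mathrm{id}_M$. First I would verify the Leibniz rule for $\partial_{\Hom}$ with respect to composition: a direct computation gives $\partial_{\Hom}(f\circ g)=\partial_{\Hom}(f)\circ g+(-1)^{|f|}f\circ\partial_{\Hom}(g)$, so $\Hom_{\mathscr{A}}(M,M)$ is a DG algebra and hence $H(\Hom_{\mathscr{A}}(M,M))$ is a unital graded $k$-algebra. Since $\mathrm{id}_M$ is a degree $0$ cocycle (it commutes with $\partial_M$) and is the multiplicative identity of this DG algebra, its cohomology class $\lceil\mathrm{id}_M\rceil\in H^0(\Hom_{\mathscr{A}}(M,M))$ is the unit of the graded ring $H(\Hom_{\mathscr{A}}(M,M))$.

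Next I would translate homotopy triviality into the language of this complex. Unwinding the definitions, $M$ is homotopically trivial exactly when $\mathrm{id}_M\sim 0$, i.e.\ when there is an $\mathscr{A}$-homomorphism $\sigma\colon M\to M$ of degree $-1$ with $\mathrm{id}_M=\partial_M\circ\sigma+\sigma\circ\partial_M$. Because $|\sigma|=-1$, the differential of the $\Hom$-complex gives $\partial_{\Hom}(\sigma)=\partial_M\circ\sigma-(-1)^{-1}\sigma\circ\partial_M=\partial_M\circ\sigma+\sigma\circ\partial_M$. Hence $\mathrm{id}_M\sim 0$ holds if and only if $\mathrm{id}_M$ is a coboundary in $\Hom_{\mathscr{A}}(M,M)$, equivalently $\lceil\mathrm{id}_M\rceil=0$ in $H^0(\Hom_{\mathscr{A}}(M,M))$.

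It then remains to connect the vanishing of this single class to the vanishing of the whole cohomology algebra. One direction is immediate: if $H(\Hom_{\mathscr{A}}(M,M))=0$, then in particular $H^0(\Hom_{\mathscr{A}}(M,M))=0$, so $\lceil\mathrm{id}_M\rceil=0$ and $M$ is homotopically trivial. For the converse, suppose $M$ is homotopically trivial, so $\lceil\mathrm{id}_M\rceil=0$. Since $\lceil\mathrm{id}_M\rceil$ is the unit of the graded ring $H(\Hom_{\mathscr{A}}(M,M))$, for every homogeneous class $\lceil\varphi\rceil$ we have $\lceil\varphi\rceil=\lceil\varphi\rceil\cdot\lceil\mathrm{id}_M\rceil=\lceil\varphi\rceil\cdot 0=0$; thus $H(\Hom_{\mathscr{A}}(M,M))=0$.

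The conceptual heart of the argument, and the step most likely to be overlooked, is this last observation that a unital graded ring whose identity vanishes must be the zero ring. This is what promotes the purely degree $0$ statement that $\mathrm{id}_M$ is null-homotopic to the statement that the entire graded endomorphism cohomology vanishes. Everything else is bookkeeping: checking the Leibniz rule and matching the homotopy operator $\sigma$ with a coboundary. I expect no serious obstacle, the only care needed being the sign in $\partial_{\Hom}$ on degree $-1$ elements, which works out to reproduce the homotopy formula exactly.
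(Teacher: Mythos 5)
Your proof is correct. The direction ``$H(\Hom_{\mathscr{A}}(M,M))=0$ implies $M$ homotopically trivial'' is exactly the paper's argument: $\mathrm{id}_M$ is a degree-$0$ cocycle, so acyclicity forces it to be a coboundary $\partial_{\Hom}(\sigma)=\partial_M\circ\sigma+\sigma\circ\partial_M$, which is precisely a contracting homotopy (your sign check for $|\sigma|=-1$ is right). For the other direction the two arguments diverge in mechanism though not in substance. The paper transports the contraction functorially: since $\Hom_{\mathscr{A}}(M,-)$ is additive, a homotopy between $\mathrm{id}_M$ and $0$ induces a homotopy between the identity and zero on the complex $\Hom_{\mathscr{A}}(M,M)$, which is therefore acyclic. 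You instead use the DG-algebra structure of the endomorphism complex: the Leibniz rule for composition makes $H(\Hom_{\mathscr{A}}(M,M))$ a unital graded ring with unit $\lceil\mathrm{id}_M\rceil$, and a unital ring whose identity is zero is the zero ring. Both are complete two-line arguments; yours has the small extra cost of verifying the Leibniz rule but the benefit of making visible \emph{why} the single class $\lceil\mathrm{id}_M\rceil$ controls the entire graded cohomology, whereas the paper's version generalizes more readily (the same additivity argument shows $\Hom_{\mathscr{A}}(M,N)$ and $\Hom_{\mathscr{A}}(N,M)$ are acyclic for every $N$, a fact the ring-theoretic argument does not directly give).
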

\begin{proof}
If $M$ is homotopically trivial, then $\Hom_{\mathscr{A}}(M,M)$ is homotopically trivial since the functor $\Hom_{\mathscr{A}}(M,-)$ is additive. So $H(\Hom_{\mathscr{A}}(M,M))=0$.

Conversely, suppose that $H(\Hom_{\mathscr{A}}(M,M))=0$, we need to prove that
$M$ is homotopically trivial. Since
$\partial_{\Hom}(\mathrm{id}_M)=\partial_M\circ \mathrm{id}_M -
\mathrm{id}_M\circ \partial_M =0$, there is $\sigma\in \Hom_{\mathscr{A}}(M,M)$
of degree $-1$ such that
$\mathrm{id}_M=\partial_{\Hom}(\sigma)=\partial_M\circ \sigma +
\sigma\circ
\partial_M$. Therefore, $M$ is homotopically trivial.
\end{proof}

\begin{lem}\label{dgfree}Let $\mathscr{A}$ be a connected cochain DG algebra.
Assume that $\mathscr{A}$ is a DG free $\mathscr{A}$-module with a direct summand $P$ such that
$H(P)$ is bounded below.  Then $P$ is also a
 DG free $\mathscr{A}$-module.
\end{lem}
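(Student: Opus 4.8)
The plan is to realise $P$ as its own minimal model and then to recognise that model as a DG free module, upgrading a natural quasi-isomorphism to an isomorphism via Lemma \ref{homiso}.

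First I would record the structural consequences of $P$ being a direct summand of a DG free module $F$. Writing $F=P\oplus Q$ as DG $\mathscr{A}$-modules, the graded module $P^{\#}$ is a direct summand of the free graded module $F^{\#}$ and is therefore a projective $\mathscr{A}^{\#}$-module; since $F$ is semi-free and hence K-projective by Lemma \ref{homotopyproj}, and K-projectivity passes to direct summands, $P$ is K-projective. Moreover $F$ is minimal, that is $\partial_F(F)\subseteq \frak{m}F$, because the differential of a DG free module acts only through $\partial_{\mathscr{A}}$ on coefficients and $\partial_{\mathscr{A}}(\mathscr{A})\subseteq \frak{m}$; as $\frak{m}F=\frak{m}P\oplus \frak{m}Q$ with $\frak{m}P\subseteq P$, this minimality is inherited by the summand, so $\partial_P(P)\subseteq \frak{m}P$.

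Next I would produce a DG free module quasi-isomorphic to $P$. Applying $H(-)$ to $F=P\oplus Q$ and using $H(F)\cong H(\mathscr{A})\otimes W$ shows that $H(P)$ is a direct summand of a free $H(\mathscr{A})$-module, hence a projective graded $H(\mathscr{A})$-module; since $H(P)$ is bounded below and $H(\mathscr{A})$ is connected, Lemma \ref{projfree} makes $H(P)$ a free $H(\mathscr{A})$-module. Choosing cocycles $z_\lambda\in P$ whose classes form a homogeneous $H(\mathscr{A})$-basis of $H(P)$, the assignment $e_\lambda\mapsto z_\lambda$ defines a morphism $\phi\colon D=\bigoplus_\lambda \Sigma^{-|z_\lambda|}\mathscr{A}\to P$ from a DG free module $D$ whose generating degrees are bounded below by $\inf H(P)$, and $H(\phi)$ sends a basis to a basis, so $\phi$ is a quasi-isomorphism.

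Finally I would upgrade $\phi$ to an isomorphism. Both $D$ and $P$ are K-projective, so $\phi$ is a homotopy equivalence with some homotopy inverse $\psi$; then $\psi\circ\phi\sim \mathrm{id}_D$, and since $D$ is bounded below, minimal, and has $D^{\#}$ projective, Lemma \ref{homiso} forces $\psi\circ\phi$ to be an isomorphism. Consequently $\phi$ is a split monomorphism and $P\cong D\oplus N$, where $N$ is an acyclic summand which is again K-projective and minimal. I expect the main obstacle to be the vanishing $N=0$: an acyclic K-projective module is contractible, and minimality then gives $k\otimes_{\mathscr{A}}N=0$, i.e. $N=\frak{m}N$, whence $N=0$ by the DG Nakayama Lemma \ref{naklem} \emph{provided $N$ is bounded below}. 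Converting the hypothesis that $H(P)$ is bounded below into genuine boundedness of the module is therefore the crux; here I would invoke the bounded below minimal semi-free resolution of $P\in \mathscr{D}^{+}(\mathscr{A})$ from Lemma \ref{exist} together with its uniqueness up to homotopy equivalence (Lemma \ref{homotopycom}), which identifies $P$ with its minimal model and rules out an unbounded acyclic summand. With $N=0$ we obtain $P\cong D$, so $P$ is DG free.
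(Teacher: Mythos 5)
Your proposal follows the paper's proof essentially step for step: $H(P)$ is a bounded-below projective, hence free, graded $H(\mathscr{A})$-module by Lemma \ref{projfree}; a DG free module on cocycle representatives of a homogeneous basis maps quasi-isomorphically to $P$; $K$-projectivity of both sides upgrades this to a homotopy equivalence; and minimality (via Lemma \ref{homiso}) upgrades that to an isomorphism. The only divergence is in the endgame --- the paper concludes directly that a homotopy equivalence between the two minimal modules is an isomorphism, whereas you detour through splitting off an acyclic summand and killing it with the DG Nakayama Lemma --- and the bounded-below subtlety you rightly flag there is present, and equally unaddressed, in the paper's own one-line conclusion.
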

\begin{proof}
Let $F = \bigoplus\limits_{i\in I}\mathscr{A}e^i$. By the assumption, $H(P)$
is a direct summand of $H(F)$, which is a free graded
$H(\mathscr{A})$-module. Hence $H(P)$ is a projective $H(\mathscr{A})$-module. Since
$H(P)$ is bounded below and $H(\mathscr{A})$ is
connected, $H(P)$ is a free $H(\mathscr{A})$-module. Let $H(P) =
\bigoplus\limits_{j\in J}H(\mathscr{A})[f^j]$, where each $f^j$ is a cocycle
in $P$.

Let $L = \bigoplus\limits_{j\in J}\mathscr{A}x_j$ be the DG free
$\mathscr{A}$-module with a cocyle basis $\{x_j|j\in J\}$. We define a
morphism of DG $\mathscr{A}$-modules: $\varepsilon: L \to P$ by $\varepsilon
(x^j) = f^j$, for any $j\in J$. It is easy to check that
$\varepsilon$ is a quasi-isomorphism. Since both $L$ and $P$ are
$K$-projective, $\varepsilon$ is a homotopy equivalence. Hence
$\varepsilon$ is an isomorphism as both $L$ and $P$ are minimal.
\end{proof}

\begin{lem}\label{semidecomp}Let $\mathscr{A}$ be a connected cochain DG algebra.
If $F$ is a semi-free DG $\mathscr{A}$-module such that $H(F)$ is bounded
below, then there is a minimal semi-free resolution $G$ of $F$ such that $F \cong G\oplus
Q$ as a DG $\mathscr{A}$-module, where $Q$ is
homotopically trivial DG $\mathscr{A}$-submodule of $F$.
\end{lem}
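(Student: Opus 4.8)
The plan is to produce $G$ directly as a DG free submodule of $F$ built from a carefully chosen semi-basis, and then to split off the remainder $Q$ as a homotopically trivial complement. Since $F$ is semi-free and $H(F)$ is bounded below, I first apply Lemma \ref{exist} to obtain a \emph{minimal} semi-free resolution $G \xrightarrow{\simeq} F$; concretely, set $b=\inf\{j\mid H^j(F)\neq 0\}$ and take $G$ with $G^{\#}\cong\coprod_{i\ge b}\Sigma^{-i}(\mathscr{A}^{\#})^{(\Lambda^i)}$ and $\partial_G(G)\subseteq\mathfrak{m}G$. Because $G$ is semi-free it is $K$-projective by Lemma \ref{homotopyproj}, so the quasi-isomorphism $\varepsilon:G\xrightarrow{\simeq}F$ lifts to a genuine morphism, and since $F$ is also semi-free (hence $K$-projective) the map $\varepsilon$ is in fact a homotopy equivalence by Lemma \ref{homotopycom}.

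Next I would upgrade this homotopy equivalence to a \emph{split monomorphism} of DG modules whose cokernel is homotopically trivial. Let $h:F\to G$ be a homotopy inverse of $\varepsilon$, so that $h\circ\varepsilon\sim\mathrm{id}_G$ and $\varepsilon\circ h\sim\mathrm{id}_F$. The key point is that $h\circ\varepsilon:G\to G$ is homotopic to $\mathrm{id}_G$, and $G$ is bounded below with $\partial_G(G)\subseteq\mathfrak{m}G$ and $G^{\#}$ projective (indeed free); hence Lemma \ref{homiso} applies and shows that $h\circ\varepsilon$ is an \emph{isomorphism}. Writing $\phi=(h\circ\varepsilon)^{-1}$, the morphism $\phi\circ h:F\to G$ satisfies $(\phi\circ h)\circ\varepsilon=\mathrm{id}_G$, so $\varepsilon$ is a split monomorphism in $\mathscr{C}(\mathscr{A})$. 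Setting $Q=\ker(\phi\circ h)$, I obtain a direct sum decomposition $F\cong \varepsilon(G)\oplus Q\cong G\oplus Q$ of DG $\mathscr{A}$-modules, where the isomorphism $\varepsilon(G)\cong G$ uses that $\varepsilon$ is a (split) mono.

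It remains to verify that the complementary summand $Q$ is homotopically trivial, for which I would invoke Lemma \ref{homotopytrivial}: it suffices to show $H(\Hom_{\mathscr{A}}(Q,Q))=0$, equivalently that $Q$ is acyclic and the identity on $Q$ is null-homotopic. Since $\varepsilon$ is a quasi-isomorphism and a split injection onto $G$, the complement $Q$ has $H(Q)=0$; being a summand of the semi-free module $F$, $Q$ is itself $K$-projective (a direct summand of a $K$-projective is $K$-projective), so the acyclicity of $Q$ forces $\mathrm{id}_Q$ to be null-homotopic, giving homotopical triviality. The main obstacle, and the step deserving the most care, is the passage from a mere homotopy equivalence to the honest splitting $F\cong G\oplus Q$: this is exactly where the minimality hypothesis on $G$ is indispensable, since it is what allows Lemma \ref{homiso} to turn the self-homotopy-equivalence $h\circ\varepsilon$ into an actual automorphism of $G$ rather than just a homotopy automorphism. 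Once that invertibility is secured, the decomposition and the triviality of $Q$ follow formally.
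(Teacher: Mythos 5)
Your proposal is correct and follows essentially the same route as the paper: take a minimal semi-free resolution $G\to F$, use the homotopy inverse to get a self-map of $G$ homotopic to $\mathrm{id}_G$, upgrade it to an actual automorphism, split off $G$, and show the acyclic complement is homotopically trivial via Lemma \ref{homotopytrivial}. The only difference is cosmetic: where you cite Lemma \ref{homiso} to invert $h\circ\varepsilon$, the paper reruns the same Nakayama argument inline, and where you argue that $Q$ is an acyclic $K$-projective summand, the paper extracts $\Hom_{\mathscr{A}}(\mathrm{coker}(h),\mathrm{coker}(h))$ as a summand of $\Hom_{\mathscr{A}}(F,\mathrm{coker}(h))$ --- both are valid.
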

\begin{proof}
By Lemma \ref{exist}, $F$ admits a minimal semi-free resolution
$g:G\to F$ with $\inf\{i|G^i\neq 0\} =\inf\{i|H^i(F)\neq 0\}$. Since
$F$ can be considered as a semi-free resolution of itself, there is a
homotopy equivalence $h:G\to F$ such that $\mathrm{id}_F\circ h\sim
g$ by Lemma \ref{homotopycom}. Let $f: F\to G $ be the homotopy
inverse of $h$. Then $f\circ h \sim \mathrm{id}_G$. Hence there is
an $\mathscr{A}$-linear homomorphism $\sigma: G\to G$ of degree $-1$ such that
$f\circ h -\mathrm{id}_G =
\partial_G\circ \sigma + \sigma \circ \partial_G $. Since
$\partial_G(G)\subseteq \frak{m}G$ and $\sigma$ is $\mathscr{A}$-linear, we
have $f\circ h-\mathrm{id}_G\subseteq \frak{m}G$. Hence
$\overline{f\circ h}=k\otimes_{\mathscr{A}}(f\circ h)$ is the identity map of
$$\overline{G} =G/\frak{m}G= k\otimes_{\mathscr{A}}G.$$ Acting on the exact
sequence $$G\stackrel{f\circ h}{\longrightarrow} G\longrightarrow
\mathrm{coker} (f\circ h)\longrightarrow 0$$ by $k\otimes_{\mathscr{A}}-$ gives
a new exact sequence
$$\overline{G}\stackrel{\overline{f\circ h}}{\longrightarrow}\overline{G}\longrightarrow
\overline{\mathrm{coker}(f\circ h)}\longrightarrow 0.$$ This implies
that $\overline{\mathrm{coker}(f\circ h)}=0$. Hence
$\mathrm{coker}(f\circ h) =\frak{m}\cdot\mathrm{coker}(f\circ h)$.
If $\mathrm{coker}(f\circ h) = G/\mathrm{im}(f\circ h)$ is not zero,
then it is bounded below since $G$ is bounded below. Let $v =
\inf\{i|(\mathrm{coker}(f\circ h))^i\neq 0\}$. Since $\frak{m}$ is
concentrated in degrees $\ge 1$, $\frak{m}\cdot
\mathrm{coker}(f\circ h)$ is concentrated in degrees $\ge v+1$. This
contradicts with $\mathrm{coker}(f\circ h)
=\frak{m}\cdot\mathrm{coker}(f\circ h)$. Therefore,
$\mathrm{coker}(f\circ h) =0$ and $f\circ h$ is surjective. We have
the following linearly split short exact sequence
\begin{equation*}\label{shortexact}
0\longrightarrow \mathrm{ker}(f\circ h)\longrightarrow
G\stackrel{f\circ h}{\longrightarrow} G\longrightarrow 0 \eqno{(1)}.
\end{equation*}
Note that a short exact sequence of DG $\mathscr{A}$-modules is called linearly split if it is split as a short exact sequence of graded $\mathscr{A}^{\#}$-modules.
Acting on $(1)$ by $k\otimes_{\mathscr{A}}-$ gives a new short exact sequence
$$0\longrightarrow \overline{\mathrm{ker}(f\circ h)}\longrightarrow
\overline{G}\stackrel{\overline{f\circ h}}{\longrightarrow}
\overline{G}\longrightarrow 0.$$ This implies that
$\overline{\mathrm{ker}(f\circ h)}=0$ since $\overline{f\circ h}$ is
the identity map. Hence $\mathrm{ker}(f\circ
h)=\frak{m}\cdot\mathrm{ker}(f\circ h)$. If $\mathrm{ker}(f\circ h)$
is not zero, then it is bounded below since it is a DG $\mathscr{A}$-submodule
of $G$. Let $u=\inf\{i|(\mathrm{ker}(f\circ h))^i\neq 0\}$. Then
$\frak{m}\cdot \mathrm{ker}(f\circ h)$ is concentrated in degrees
$\ge u+1$. This contradicts with $\mathrm{ker}(f\circ
h)=\frak{m}\cdot\mathrm{ker}(f\circ h)$. Thus $\mathrm{ker}(f\circ
h) =0$ and $f\circ h$ is an isomorphism. Let $\theta:G\to G$ be the
inverse of $f\circ h$. Then $\theta\circ f\circ h = \mathrm{id}_G$.
This implies that $h$ is a monomorphism and the short exact sequence
$$0\longrightarrow G\stackrel{h}{\longrightarrow} F\longrightarrow
\mathrm{coker}(h)\longrightarrow 0$$ is split. Hence $F\cong G\oplus
\mathrm{coker}(h)$ as a DG $\mathscr{A}$-module. One sees that $\mathrm{coker}(h)$ is quasi-trivial
since $H(h)$ is an
isomorphism. By Lemma
\ref{homotopyproj}, both $\Hom_{\mathscr{A}}(F,\mathrm{coker}(h))$ and
$\Hom_{\mathscr{A}}(G,\mathrm{coker}(h))$ are acyclic. Since
$$\Hom_{\mathscr{A}}(F,\mathrm{coker}(h))\cong \Hom_{\mathscr{A}}(G,\mathrm{coker}(h))\oplus
\Hom_{\mathscr{A}}(\mathrm{coker}(h),\mathrm{coker}(h)),$$
we have
$H(\Hom_{\mathscr{A}}(\mathrm{coker}(h),\mathrm{coker}(h)))=0$. By Lemma
\ref{homotopytrivial}, the DG $\mathscr{A}$-module $\mathrm{coker}(h)$ is homotopically trivial.

\end{proof}

\begin{lem}\label{imporcong}
Let $\mathscr{A}$ be a connected cochain DG algebra.
Suppose that $M$, $N$ are two DG $\mathscr{A}$-modules and $X$ is a DG $\mathscr{A}^e$-module.
Then the chain map
\begin{align*}
\phi: \Hom_{\mathscr{A}}(X\otimes_{\mathscr{A}}M,& N) \to \Hom_{\mathscr{A}^e}(X, \Hom_k(M,N))\\
&f\mapsto \phi(f):x\mapsto f_x:m\to f(x\otimes m)
\end{align*}
is an isomorphism.
\end{lem}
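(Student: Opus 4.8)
The plan is to recognize $\phi$ as the differential graded incarnation of the classical tensor--hom adjunction and to produce an explicit two-sided inverse; the entire content then reduces to checking that the relevant module structures and differentials are respected, with the Koszul signs being the only delicate point. First I would pin down the DG $\mathscr{A}^e$-module structure on $\Hom_k(M,N)$: for homogeneous $g\in \Hom_k(M,N)$ and homogeneous $a,b\in \mathscr{A}$, the left action is $(a\cdot g)(m)=a\,g(m)$ (using the left $\mathscr{A}$-structure of $N$) and the right action is $(g\cdot b)(m)=(-1)^{|b||g|}g(bm)$ (using the left $\mathscr{A}$-structure of $M$, with the sign fixed by moving $b$ past $g$); together these make $\Hom_k(M,N)$ a left $\mathscr{A}^e$-module. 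Its differential is the commutator $\partial(g)=\partial_N\circ g-(-1)^{|g|}g\circ \partial_M$ from the definition of the Hom-complex recalled earlier. With this structure $\Hom_{\mathscr{A}^e}(X,\Hom_k(M,N))$ is a genuine Hom-complex and $\phi$ is a candidate morphism of complexes of the correct degree.

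Next I would verify that $\phi$ is well defined, that is, that for $f\in \Hom_{\mathscr{A}}(X\otimes_{\mathscr{A}}M,N)$ the assignment $x\mapsto f_x$ with $f_x(m)=f(x\otimes m)$ is $\mathscr{A}^e$-linear. One checks separately that $x\mapsto f_x$ respects the left $\mathscr{A}$-action (using the $\mathscr{A}$-linearity of $f$ on the left tensor factor of $X\otimes_{\mathscr{A}}M$) and the right $\mathscr{A}$-action (using the balanced relation $xb\otimes m=x\otimes bm$ holding in the tensor product over $\mathscr{A}$). In parallel I would write down the candidate inverse $\psi$, sending $g\in \Hom_{\mathscr{A}^e}(X,\Hom_k(M,N))$ to the map determined by $x\otimes m\mapsto g(x)(m)$. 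Here one must check that $\psi(g)$ is well defined on $X\otimes_{\mathscr{A}}M$ (again via the balanced relation, now using that $g$ is right $\mathscr{A}$-linear) and that it is left $\mathscr{A}$-linear (using that $g$ is left $\mathscr{A}$-linear). That $\phi$ and $\psi$ are mutually inverse is then immediate from the formulas, since both composites are the identity on the nose.

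Finally I would confirm that $\phi$ intertwines the two Hom-differentials, i.e. $\phi\circ \partial_{\Hom}=\partial_{\Hom}\circ \phi$, by expanding both sides through the definitions of the differentials on $X\otimes_{\mathscr{A}}M$, on $N$, on $\Hom_k(M,N)$, and on the two Hom-complexes, and matching terms. The main obstacle is precisely this sign bookkeeping: each time the degree-$1$ differential is commuted past a homogeneous element of $X$ or $M$, or past $f$ itself, a Koszul sign appears, and the statement hinges on these signs cancelling so that $\phi(\partial_{\Hom}f)$ and $\partial_{\Hom}(\phi(f))$ agree term by term. Once the conventions for the actions and differentials are fixed as above, this is a routine but careful verification. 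Since $\phi$ is then a bijective chain map with explicit inverse $\psi$, it follows automatically that $\psi$ is also a chain map, so $\phi$ is an isomorphism in $\mathscr{C}(k)$, as claimed.
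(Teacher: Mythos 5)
Your proposal is correct and follows essentially the same route as the paper: write down the explicit inverse $\psi(g)(x\otimes m)=g(x)(m)$, verify $\mathscr{A}^e$-linearity of $\phi(f)$ and $\mathscr{A}$-linearity of $\psi(g)$, and check compatibility with the Hom-differentials by direct Koszul-sign bookkeeping. Your added observations — that $\psi(g)$ must be checked to be well defined on the balanced tensor product $X\otimes_{\mathscr{A}}M$, and that a bijective chain map automatically has a chain-map inverse — are sound minor refinements of the same argument.
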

\begin{proof}
It suffices to prove the following two statements:

(1) For any $f\in \Hom_{\mathscr{A}}(X\otimes_{\mathscr{A}}M, N)$,
we need to show $\phi(f)$ is $\mathscr{A}^e$-linear.

(2) The map $\phi$ is a chain map and has an inverse chain map.

For any $a\otimes b\in \mathscr{A}^e$, $x\in X$ and $m\in M$, we have
\begin{align*}
\phi(f)[(a\otimes b)x](m)&=f_{(a\otimes b)x}(m)=f[(a\otimes b)x\otimes m]\\
                        &=(-1)^{|b|\cdot |x|}f[axb\otimes m]=(-1)^{|b|\cdot |x|+|f|\cdot |a|}af[xb\otimes m]
\end{align*}
and
\begin{align*}
[(a\otimes b)\phi(f)(x)](m)&=[(a\otimes b)f_{x}](m)=(-1)^{|f|\cdot |b|+|x|\cdot |b|}af_x(bm) \\
&=(-1)^{|f|\cdot |b|+|x|\cdot |b|}af(x\otimes bm).
\end{align*}
Hence $\phi(f)[(a\otimes b)x]=(-1)^{|f|\cdot (|b|+|a|)}(a\otimes b)\phi(f)(x)$. We prove statement (1).

For any $f\in \Hom_{\mathscr{A}}(X\otimes_{\mathscr{A}}M, N)$, $x\in X$ and $m\in M$, we have
\begin{align*}
&\quad [\partial_{\Hom}\circ \phi(f)](x)(m)= [\partial_{\Hom_{k}(M,N)}\circ \phi(f)-(-1)^{|f|}\phi(f)\circ \partial_X](x)(m)\\
&=[\partial_N\circ f_x-(-1)^{|f|+|x|}f_x\circ \partial_M](m)-(-1)^{|f|}f_{\partial_X(x)}(m)\\
&=\partial_N[f(x\otimes m)]-(-1)^{|f|+|x|}f[x\otimes \partial_M(m)]-(-1)^{|f|}f[\partial_X(x)\otimes m]\\
\end{align*}
and
\begin{align*}
&\quad [\phi\circ \partial_{\Hom}(f)](x)(m)=[\phi(\partial_N\circ f-(-1)^{|f|}f\circ \partial_{\otimes})](x)(m) \\
&=(\partial_N\circ f-(-1)^{|f|}f\circ \partial_{\otimes})_x(m)=(\partial_N\circ f-(-1)^{|f|}f\circ \partial_{\otimes})(x\otimes m)\\
&=\partial_N[f(x\otimes m)]-(-1)^{|f|}f[\partial_X(x)\otimes m]-(-1)^{|f|+|x|}f[x\otimes \partial_M(m)].
\end{align*}
Thus $\phi$ is a chain map. It remains to show that $\phi$ has an inverse chain map. We
define
\begin{align*}
\psi: \Hom_{\mathscr{A}^e}(X, \Hom_k(M,N))& \to  \Hom_{\mathscr{A}}(X\otimes_{\mathscr{A}}M, N)\\
&g\mapsto \psi(g):x\otimes m \mapsto  g(x)(m).\\
\end{align*}
We need to show the following statements:

(3)For any $g\in\Hom_{\mathscr{A}^e}(X, \Hom_k(M,N))$,  $\psi(g)$ is $\mathscr{A}$-linear.

(4)$\psi$ is a chain map and $\psi$ is the inverse of $\phi$.

For any $a\in \mathscr{A}$ and $x\otimes m\in X\otimes_{\mathscr{A}}M$, we have
\begin{align*}
\psi(g)[a(x\otimes m)]&=\psi(g)(ax\otimes m)=g(ax)(m)\\
&=a(g(x))(m)=a[g(x)(m)]=a[\psi(g)(x\otimes m)].
\end{align*}
Hence $\psi(g)$ is $\mathscr{A}$-linear and we get $(3)$. For any $g\in \Hom_{\mathscr{A}^e}(X, \Hom_k(M,N))$ and $x\otimes m\in X\otimes_{\mathscr{A}}M$, we have
\begin{align*}
[\partial_{\Hom}\circ \psi(g)]&(x\otimes m)=[\partial_N\circ \psi(g)-(-1)^{|g|}\psi(g)\circ \partial_{\otimes}](x\otimes m)\\
                                        &=\partial_N[g(x)(m)]-(-1)^{|g|}\psi(g)[\partial_X(x)\otimes m+(-1)^{|x|}x\otimes \partial_M(m)]\\
                                        &=\partial_N[g(x)(m)]-(-1)^{|g|}g(\partial_X(x))(m)-(-1)^{|g|+|x|}g(x)[\partial_M(m)]
\end{align*}
and
\begin{align*}
&\quad [\psi\circ \partial_{\Hom}(g)](x\otimes m)= [\psi(\partial_{\Hom_k(M,N)}\circ g-(-1)^{|g|}g\circ \partial_X)](x\otimes m)\\
&=(\partial_{\Hom_k(M,N)}\circ g-(-1)^{|g|}g\circ \partial_X)(x)(m)\\
&=[\partial_N\circ g(x)-(-1)^{|g|+|x|}g(x)\circ \partial_M](m)-(-1)^{|g|}g(\partial_X(x))(m)\\
&=\partial_N[g(x)(m)]-(-1)^{|g|+|x|}g(x)[\partial_M(m)]-(-1)^{|g|}g(\partial_X(x))(m).
\end{align*}
So $\phi$ is a chain map. Furthermore,
\begin{align*}
[\phi\circ\psi(g)(x)](m)=\psi(g)_x(m)=\psi(g)(x\otimes m)=g(x)(m)
\end{align*}
and
\begin{align*}
[\psi\circ \phi(f)](x\otimes m)=[\phi(f)(x)](m)=f_x(m)=f(x\otimes m).
\end{align*}
Hence $\psi$ is the inverse of $\phi$.
\end{proof}

\begin{lem}\label{imporquasi}
Let $\mathscr{A}$ be a connected cochain DG algebra. If
$M$ is a DG $\mathscr{A}$-module such that $\dim_kH(M)<\infty$, then
for any DG $\mathscr{A}$-module $N$, the DG $\mathscr{A}^e$-module $\Hom_k(M, N)$ is quasi-isomorphic to $N\otimes
M^*$.
\end{lem}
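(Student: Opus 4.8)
The plan is to exhibit an explicit natural morphism of DG $\mathscr{A}^e$-modules
$$\rho: N\otimes_k M^*\To \Hom_k(M,N),\qquad \rho(n\otimes\xi)(m)=\pm\,\xi(m)\,n,$$
where $M^*=\Hom_k(M,k)$ and the sign is the appropriate Koszul sign in the degrees of $\xi,n,m$, and then to prove that $\rho$ is a quasi-isomorphism. First I would record the DG $\mathscr{A}^e$-module (that is, DG bimodule) structures on both sides: on $N\otimes_k M^*$ via the left action on $N$ and the right action on $M^*$, and on $\Hom_k(M,N)$ via $(a\cdot f\cdot b)(m)=af(bm)$ up to Koszul signs. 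Verifying that $\rho$ is $\mathscr{A}^e$-linear and commutes with the differentials is a routine but sign-sensitive computation, entirely parallel to the verifications carried out in Lemma \ref{imporcong}; I would relegate it to a direct check.

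Since a morphism of DG $\mathscr{A}^e$-modules is a quasi-isomorphism precisely when it is a quasi-isomorphism of the underlying complexes of $k$-vector spaces, the $\mathscr{A}^e$-structure can now be forgotten, and it suffices to show $\rho$ is a quasi-isomorphism of $k$-complexes. Here I would exploit that $k$ is a field: every complex of $k$-vector spaces splits degreewise as the direct sum of its cohomology (equipped with zero differential) and a contractible complex, so there is a homotopy equivalence $\iota:H(M)\To M$ of $k$-complexes, where $H(M)$ is regarded as a complex with zero differential.

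The key step is a naturality/reduction argument. Both functors $\Hom_k(-,N)$ and $N\otimes_k(-)^*$ are additive and assembled from $\Hom_k$ and $\otimes_k$, so they carry homotopy equivalences of $k$-complexes to homotopy equivalences; moreover $\rho$ is natural in $M$. Applying $\rho$ to $\iota$ yields a commutative square whose vertical arrows $N\otimes_k\iota^*:N\otimes_k M^*\To N\otimes_k H(M)^*$ and $\Hom_k(\iota,N):\Hom_k(M,N)\To\Hom_k(H(M),N)$ are homotopy equivalences. Consequently $\rho_M$ is a quasi-isomorphism if and only if $\rho_{H(M)}$ is, and the problem reduces to the case in which $M$ has zero differential, whence $M=H(M)$ is a \emph{finite-dimensional} graded $k$-vector space by the hypothesis $\dim_kH(M)<\infty$.

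Finally, for such an $M$ the map $\rho$ is the classical evaluation isomorphism available when $M$ is finite-dimensional: choosing a homogeneous basis of $M$ one checks it is bijective in each degree, and it is a chain map by the first step, hence a chain isomorphism; this completes the argument. The main obstacle is not conceptual but bookkeeping: pinning down the Koszul signs so that $\rho$ is \emph{simultaneously} $\mathscr{A}^e$-linear and a chain map, and ensuring the hypothesis $\dim_kH(M)<\infty$ is invoked exactly where it is indispensable, namely at the last step, where finite-dimensionality of $M$ is what turns the always-injective evaluation map into an isomorphism (for infinite-dimensional $M$ it fails to be surjective).
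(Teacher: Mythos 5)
Your proposal is correct and follows essentially the same route as the paper: the paper's proof simply observes that $\dim_kH(M)<\infty$ makes $M$ a compact DG $k$-module and then asserts that the evaluation map $n\otimes f\mapsto(m\mapsto nf(m))$ is a quasi-isomorphism, which is exactly what your reduction (split $M\simeq H(M)\oplus(\text{contractible})$ over the field $k$, use naturality, and check the finite-dimensional zero-differential case directly) establishes in detail. Your version spells out the justification that the paper leaves implicit, but the underlying idea is identical.
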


\begin{proof}
Since $\dim_kH(M)<\infty$, $M$ is a compact DG $k$-module. Then we have
$$\Hom_k(M,N)=\Hom_k(M,k\otimes N)\cong \Hom_k(M,k)\otimes N =  M^*\otimes N\cong N\otimes M^*$$ in $\mathscr{D}(\mathcal{A}^e)$. More precisely,
the morphism \begin{align*}
\theta:\quad & N\otimes
M^* \to  \Hom_k(M,N) \\
&n\otimes f \mapsto (m\mapsto nf(m))
\end{align*}
is a quasi-isomorphism.
\end{proof}

\begin{lem}\label{aelem}
Let $\mathscr{A}$ be a connected cochain DG algebra. If
 $M$ is a DG $\mathscr{A}$-module such that $\dim_kH(M)<\infty$,  then
for any DG $\mathscr{A}$-module $N$, we have $$H(R\Hom_{\mathscr{A}^e}(\mathscr{A}, N\otimes_k
M^*))\cong H(R\Hom_{\mathscr{A}}(M,N)).$$
\end{lem}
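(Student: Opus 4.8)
The plan is to compute both sides through a single semi-free resolution of $\mathscr{A}$ over the enveloping algebra $\mathscr{A}^e$, transported across the adjunction of Lemma \ref{imporcong}. First I would use Lemma \ref{homotopycom} to fix a semi-free resolution $\varepsilon: P\stackrel{\simeq}{\to}\mathscr{A}$ of $\mathscr{A}$ in $\mathscr{C}(\mathscr{A}^e)$. Since $P$ is semi-free it is $K$-projective over $\mathscr{A}^e$ (Lemma \ref{homotopyproj}), so
$$R\Hom_{\mathscr{A}^e}(\mathscr{A}, N\otimes_k M^*)\cong \Hom_{\mathscr{A}^e}(P, N\otimes_k M^*)$$
in $\mathscr{D}(k)$. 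By Lemma \ref{imporquasi} the morphism $\theta: N\otimes_k M^*\to \Hom_k(M,N)$ is a quasi-isomorphism of DG $\mathscr{A}^e$-modules, and applying the $K$-projective functor $\Hom_{\mathscr{A}^e}(P,-)$ (Lemma \ref{homotopyproj}) shows that $\Hom_{\mathscr{A}^e}(P, N\otimes_k M^*)$ is quasi-isomorphic to $\Hom_{\mathscr{A}^e}(P, \Hom_k(M,N))$.

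Next I would apply Lemma \ref{imporcong} with $X=P$ to obtain an honest isomorphism of complexes
$$\Hom_{\mathscr{A}^e}(P, \Hom_k(M,N))\cong \Hom_{\mathscr{A}}(P\otimes_{\mathscr{A}}M, N).$$
It then remains to identify the right-hand side with $R\Hom_{\mathscr{A}}(M,N)$, for which I would show that $P\otimes_{\mathscr{A}}M$ is a semi-free resolution of $M$ over $\mathscr{A}$. Restricting the semi-basis of $P$ along $\mathscr{A}^{op}\hookrightarrow \mathscr{A}^e$ exhibits $P$ as a semi-free, hence $K$-flat, right DG $\mathscr{A}$-module, while tensoring the semi-free filtration of $P$ with $M$ shows that $P\otimes_{\mathscr{A}}M$ is a semi-free \emph{left} DG $\mathscr{A}$-module. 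Because $\varepsilon$ is a quasi-isomorphism of right DG $\mathscr{A}$-modules between the $K$-flat modules $P$ and $\mathscr{A}$, the induced map $\varepsilon\otimes_{\mathscr{A}}\mathrm{id}_M: P\otimes_{\mathscr{A}}M\to \mathscr{A}\otimes_{\mathscr{A}}M\cong M$ is a quasi-isomorphism; thus $P\otimes_{\mathscr{A}}M$ is a semi-free ($K$-projective) resolution of $M$, and $\Hom_{\mathscr{A}}(P\otimes_{\mathscr{A}}M, N)$ computes $R\Hom_{\mathscr{A}}(M,N)$.

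Combining the displayed (quasi-)isomorphisms and passing to cohomology then yields
$$H(R\Hom_{\mathscr{A}^e}(\mathscr{A}, N\otimes_k M^*))\cong H(R\Hom_{\mathscr{A}}(M,N)),$$
as desired. The step I expect to be the main obstacle is the verification in the previous paragraph that $P\otimes_{\mathscr{A}}M$ really is a semi-free resolution of $M$: one must check both that restriction of the $\mathscr{A}^e$-semi-free structure yields $K$-flatness over $\mathscr{A}^{op}$ (so that $\varepsilon\otimes_{\mathscr{A}}\mathrm{id}_M$ remains a quasi-isomorphism for the \emph{arbitrary} module $M$, using that the cone of $\varepsilon$ is an acyclic $K$-flat right module) and that the bimodule semi-free filtration of $P$ descends to a genuine semi-free filtration of $P\otimes_{\mathscr{A}}M$ over $\mathscr{A}$. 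Everything else is a formal chaining of the adjunction isomorphism with the resolution-independence of $R\Hom$.
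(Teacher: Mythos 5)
Your proposal is correct, but it takes a genuinely different route from the paper's. The paper resolves all three objects at once: a semi-free resolution $F_M\to M$, a $K$-injective resolution $N\to I_N$, and the semi-free resolution $X\to\mathscr{A}$ over $\mathscr{A}^e$; it then applies Lemma \ref{imporcong} to the triple $(X,F_M,I_N)$ and Lemma \ref{imporquasi} to $F_M$, computing $R\Hom_{\mathscr{A}}(M,N)$ as $\Hom_{\mathscr{A}}(X\otimes_{\mathscr{A}}F_M,I_N)$ via the quasi-isomorphism $X\otimes_{\mathscr{A}}F_M\to F_M$ (which needs only the $K$-flatness of the semi-free left module $F_M$) and the $K$-injectivity of $I_N$. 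In that arrangement every step is either an honest isomorphism or a routine preservation of quasi-isomorphisms by a standard resolution. You resolve only $\mathscr{A}$ and apply the two lemmas directly to $M$ and $N$, which shortens the chain but shifts the burden onto the base-change facts you yourself flag, and there the details need care. First, ``restricting the semi-basis'' of $P$ is not literally enough to see that $P$ is semi-free over $\mathscr{A}^{op}$: for a semi-basis element $e$ and $a$ in a basis of $\mathscr{A}^{\#}$, the differential $\partial((a\otimes 1)e)$ contains the term $(\partial_{\mathscr{A}}(a)\otimes 1)e$, which sits at the \emph{same} filtration level, so the inherited basis is not a semi-basis; one should instead split the extra tensor factor $\mathscr{A}$, as a complex of $k$-vector spaces, into its cohomology plus a contractible summand (this does yield $K$-flatness, which is all you use). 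Second, the induced filtration of $P\otimes_{\mathscr{A}}M$ has subquotients $\mathscr{A}\otimes V(i)\otimes M$, which are semi-free but not DG free on cocycles, so you need closure of semi-freeness (or at least of $K$-projectivity) under filtrations with semi-free subquotients. Alternatively, the $K$-projectivity of $P\otimes_{\mathscr{A}}M$ --- which is all your argument requires --- follows immediately from Lemma \ref{imporcong} itself: $\Hom_{\mathscr{A}}(P\otimes_{\mathscr{A}}M,-)\cong\Hom_{\mathscr{A}^e}(P,\Hom_k(M,-))$, the functor $\Hom_k(M,-)$ preserves quasi-isomorphisms since $k$ is a field, and $P$ is $K$-projective over $\mathscr{A}^e$; this is exactly the trick the paper uses to show that its resolution $X$ is $K$-projective as a left DG $\mathscr{A}$-module, and adopting it would make your shorter route fully rigorous.
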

\begin{proof}
Let $F_M$  be a semi-free resolution of $M$ and let $I_N$ be a
$K$-injective resolution of $N$. The DG $\mathscr{A}^e$-module $\mathscr{A}$ has a minimal semi-free resolution $X
\stackrel{\simeq}{\to} \mathscr{A}$. As a DG $\mathscr{A}$-module, $X$ is $K$-projective
since $$\Hom_{\mathscr{A}}(X,-)\cong \Hom_{\mathscr{A}}(\mathscr{A}^e\otimes_{\mathscr{A}^e}X,-) \cong
\Hom_{\mathscr{A}^e}(X,\Hom_{\mathscr{A}}(\mathscr{A}^e,-)) $$ and $\mathscr{A}^e=\mathscr{A}\otimes \mathscr{A}^{op}$ is a $K$-projective DG
$\mathscr{A}$-module.  We have
\begin{align*}
H(R\Hom_{\mathscr{A}}(M,N)) & \cong H(\Hom_{\mathscr{A}}(F_M,N))\\
&\cong H(\Hom_{\mathscr{A}}(F_M,I_N)) \\
&\cong H(\Hom_A(X\otimes_AF_M,I_N))\\
&\stackrel{(a)}{\cong}H(\Hom_{{\mathscr{A}}^e}(X, \Hom_k(F_M,I_N)))\\
&\stackrel{(b)}{\cong} H(\Hom_{{\mathscr{A}}^e}(X,I_N\otimes_kF_M^*))\\
&\cong H(\Hom_{{\mathscr{A}}^e}(X,N\otimes_kM^*))\\
&\cong H(R\Hom_{{\mathscr{A}}^e}({\mathscr{A}},N\otimes_kM^*)),
\end{align*}
where $(a)$ and $(b)$ are by Lemma \ref{imporcong} and Lemma
\ref{imporquasi} respectively.

\end{proof}

\begin{lem}\label{homsmooth}
Let $\mathscr{A}$ be a connected cochain DG algebra. Then ${}_{\mathscr{A}}k$ is compact if
and only if ${\mathscr{A}}$ is homologically smooth, which is also equivalent to
the condition that $k_{\mathscr{A}}$ is compact.
\end{lem}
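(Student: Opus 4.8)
The plan is to funnel every one of the three conditions through a single finiteness statement, namely $\dim_k H(R\Hom_{\mathscr{A}}(k,k))<\infty$, the total dimension of the cohomology of the Ext-algebra of the trivial module. Recall that $\mathscr{A}$ is homologically smooth exactly when $\mathscr{A}\in\mathscr{D}^c(\mathscr{A}^e)$, i.e. the diagonal bimodule is compact over the enveloping algebra $\mathscr{A}^e=\mathscr{A}\otimes\mathscr{A}^{op}$. I would first prove the equivalence ``${}_{\mathscr{A}}k$ compact $\iff \mathscr{A}$ homologically smooth'', and then recover the statement for $k_{\mathscr{A}}$ by transporting this equivalence to the opposite algebra $\mathscr{A}^{op}$.

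For the core equivalence, I first observe that $\mathscr{A}^e$ is again a connected cochain DG algebra with residue field $k$, and that $\mathscr{A}\in\mathscr{D}^{+}(\mathscr{A}^e)$, since $H(\mathscr{A})$ is connected and hence bounded below over $H(\mathscr{A}^e)\cong H(\mathscr{A})^e$. Therefore Lemma \ref{comp}, applied to the DG algebra $\mathscr{A}^e$ and the module $\mathscr{A}$, says that $\mathscr{A}$ is homologically smooth if and only if $\dim_k H(R\Hom_{\mathscr{A}^e}(\mathscr{A},k))<\infty$. I then invoke Lemma \ref{aelem} with $M=N=k$: here $\dim_k H(k)=1<\infty$ and $k\otimes_k k^{*}\cong k$ as a DG $\mathscr{A}^e$-module with its standard bimodule structure, so the lemma gives $H(R\Hom_{\mathscr{A}^e}(\mathscr{A},k))\cong H(R\Hom_{\mathscr{A}}(k,k))$. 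On the other side, Lemma \ref{comp} applied to ${}_{\mathscr{A}}k\in\mathscr{D}^{+}(\mathscr{A})$ says that ${}_{\mathscr{A}}k$ is compact if and only if $\dim_k H(R\Hom_{\mathscr{A}}(k,k))<\infty$. Chaining these three facts yields ``${}_{\mathscr{A}}k$ compact $\iff \mathscr{A}$ homologically smooth''.

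To finish, I use that $\mathscr{A}^{op}$ is also a connected cochain DG algebra, so the core equivalence applies to it verbatim: ${}_{\mathscr{A}^{op}}k$ is compact if and only if $\mathscr{A}^{op}$ is homologically smooth. Since a right DG $\mathscr{A}$-module is precisely a left DG $\mathscr{A}^{op}$-module, we have $k_{\mathscr{A}}={}_{\mathscr{A}^{op}}k$, whence $k_{\mathscr{A}}$ is compact if and only if $\mathscr{A}^{op}$ is homologically smooth. It then remains to note the symmetry $(\mathscr{A}^{op})^e\cong\mathscr{A}^e$, realized by swapping the two tensor factors (with the Koszul sign), under which the diagonal bimodule $\mathscr{A}^{op}$ is carried to $\mathscr{A}$; hence $\mathscr{A}^{op}$ is homologically smooth exactly when $\mathscr{A}$ is. Combining, $k_{\mathscr{A}}$ is compact if and only if $\mathscr{A}$ is homologically smooth, which closes all the equivalences.

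The formal chain is short, and the work lies in hypothesis-checking rather than in any estimate. The points that need care are: verifying $\mathscr{A}\in\mathscr{D}^{+}(\mathscr{A}^e)$ and that the residue field of $\mathscr{A}^e$ really is $k$, so that Lemma \ref{comp} is legitimately applicable to the enveloping algebra; confirming that the $\mathscr{A}^e$-module $k\otimes_k k^{*}$ occurring in Lemma \ref{aelem} is genuinely the trivial bimodule $k$ with the correct left–right action; and the mild bookkeeping of the identification $(\mathscr{A}^{op})^e\cong\mathscr{A}^e$ together with the matching of diagonal bimodules. Once these are in place, every implication is a direct substitution into Lemma \ref{comp} and Lemma \ref{aelem}.
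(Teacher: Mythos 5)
Your proof is correct and follows essentially the same route as the paper: both set $M=N=k$ in Lemma \ref{aelem} to identify $H(R\Hom_{\mathscr{A}^e}(\mathscr{A},k))$ with $H(R\Hom_{\mathscr{A}}(k,k))$ and then apply Lemma \ref{comp} on both sides. The only minor divergence is the final left/right symmetry, which the paper obtains by noting that Lemma \ref{comp} detects compactness of ${}_{\mathscr{A}}k$ and of $k_{\mathscr{A}}$ by the same quantity $\dim_kH(k\,{}^L\!\otimes_{\mathscr{A}}k)$, whereas you pass to $\mathscr{A}^{op}$ and use $(\mathscr{A}^{op})^e\cong\mathscr{A}^e$; both are valid.
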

\begin{proof}
Let $M = N = k$ in Lemma \ref{aelem}. Then we have
$$H(R\Hom_{{\mathscr{A}}^e}({\mathscr{A}},k))\cong H(R\Hom_{\mathscr{A}}(k,k)).$$
Hence $\dim_kH(R\Hom_{{\mathscr{A}}^e}({\mathscr{A}},k))<\infty$ if and only if $\dim_kH(R\Hom_{\mathscr{A}}(k,k))<\infty$.
By Lemma \ref{comp},  $\mathscr{A}$ is a
compact DG $\mathscr{A}^e$-module if and only if ${}_{\mathscr{A}}k$ is compact.
The DG module ${}_{\mathcal{A}}k$ is compact if and only if $k_{\mathscr{A}}$ is compact by considering the dimension of $H(k{}^L\otimes_{\mathscr{A}}k)$.
\end{proof}


\section {two invariants of dg modules}
The terminology `class'
in group theory is used to measure the shortest length of a
filtration with sub-quotients of certain type. Carlsson \cite{Car}
introduced `free class' for solvable free differential graded
modules over a graded polynomial ring. In \cite{ABI},
 Avramov, Buchweitz and Iyengar  introduced free class,
projective class and flat class for differential modules over a
commutative ring.
Inspired from them, the notion of DG free class for
semi-free DG modules was introduced in \cite{MW3}.
\begin{defn}\label{DGfclass}{\rm
 Let $F$ be a semi-free DG
$\mathscr{A}$-module. A semi-free filtration of $F$
$$0=F(-1)\subseteq F(0)\subseteq\cdots \subseteq F(n)\subseteq\cdots$$
is called strictly increasing, if $F(i-1) \neq F(i)$ when
$F(i-1)\neq F, i\ge 0$.  If there is some $n$ such that $F(n)=F$ and
$F(n-1)\neq F$, then we say that this strictly increasing semi-free
filtration has length $n$. If no such integer exists, then we say
the length is $+\infty$. The DG free class of $F$ is the shortest length of all strictly
increasing semi-free filtrations of $F$. We denote it
by $\mathrm{DG free\,\, class}_{\mathscr{A}}F$.}
\end{defn}

In general, it is hard to determine the DG free class of a semi-free
DG $\mathscr{A}$-module. For this, lets consider a special kind of semi-free
filtration. Let $F$ be a semi-free $A$-module with a semi-basis $E =
\{e_i|i\in I\}.$ Then $F^{\#}= \mathscr{A}^{\#}\otimes V$ is a free
$\mathscr{A}^{\#}$-module, where $V=\bigoplus\limits_{e\in E}ke$ is a
graded $k$-vector space spanned by $E$.
Let $V_0 = \{v\in V|\partial_F(v) = 0\}$ and define $F(0)$ as a DG
$\mathscr{A}$-submodule of $F$ with $F(0)^{\#}= \mathscr{A}^{\#}\otimes V_0$. Similarly,
let $V_{\leq 1} = \{v\in V|\partial_F(v)\in F(0) \}$, we define
$F(1)$ as a DG $\mathscr{A}$-submodule of $F$ such that $F(1)^{\#} = \mathscr{A}^{\#}
\otimes V_{\leq 1}$. It is easy to see that $F(1)$ is a semi-free
$\mathscr{A}$-submodule of $F$. Inductively, we suppose that $F(n)$ has been
defined. Let $V_{\leq n+1} = \{v \in V|
\partial_F(v) \in F(n) \}$ and define $F(n+1)$ as a DG $\mathscr{A}$-submodule of $F$
such that $F(n+1)^{\#}= \mathscr{A} \otimes V_{\leq n+1}$. In general, we let
$V(i)$ be a subspace of $V_{\leq i}$ such that $V_{\leq i}=V(i)
\oplus V_{\leq i-1}, i \ge 0.$ In this way, we define a strictly
increasing semi-free filtration of $F$:
$$0= F(-1)\subset F(0) \subset F(1) \subset\cdots \subset F(n) \subset \cdots,$$
such that each $F(i)/F(i-1)=\mathscr{A} \otimes V(i), i\ge 0$ is DG free on a
cocycle basis, which is also a basis of bi-graded $k$-vector space
$V(i)$.  Note that $\partial_F(v)\in F(i-1)$ but $\partial_F(v)\not\in F(i-2)$,
for any $v \in V(i)$. We call this semi-free filtration a standard
semi-free filtration of $F$ associated with the semi-basis $E$.
Obviously, the DG free class of $F$ must be equal to the length of
some standard semi-free filtration. In general, the lengths of standard semi-free filtrations of a
minimal semi-free DG $A$-module associated with different semi-basis
is generally not equal to each other. Lets see the following
example.

\begin{ex}\label{secex}
Let $\mathscr{A}$ be a connected cochain DG algebra such that there is a
graded element $a\in \mathscr{A}^1,$ $\partial_{\mathscr{A}}(a)=x \neq 0.$  Let $F$
be a semi-free $\mathscr{A}$-module such that  $$F^{\#}\cong
\bigoplus_{i=0}^n\mathscr{A}^{\#}e_i,$$ where the degree of $e_i$ is
$i$, and the differential is defined by
$$\partial_F(e_0) =0 \quad \text{and} \quad \partial_F(e_i)=\partial_{\mathscr{A}}(a)e_{i-1}-
a\partial_F(e_{i-1}), \,\, i\geq 1.$$ It is easy to check that $F$
has a standard semi-free filtration
$$0\subset F(0)\subset F(1) \subset\cdots \subset F(n-1) \subset F(n) =F,$$
such that $F(i)/F(i-1) = \mathscr{A}e_i, 1\le i\le n$. The length of this
filtration is $n$. On the other hand,
 $F = \mathscr{A}e_0\oplus \mathscr{A}(ae_0-e_1)\oplus \cdots \oplus
 \mathscr{A}(ae_{n-1}-e_n)$ is a DG free $\mathscr{A}$-module on a cocycle basis.
 Hence $F$ has a standard semi-free filtration of length $0$.
\end{ex}

In rational homotopy
theory, cone length of a topological space $X$ is defined to be the
least $m$ such that $X$ has the homotopy type of an $m$-cone. It is
a useful invariant in the evaluation of Lusternik-Schnirelmann
category, which is an important invariant of homotopy type. In \cite{MW3},
this invariant was introduced to DG homological algebra.
\begin{defn}\cite{MW3}{\rm
Let $M$ be a non-acyclic DG $\mathscr{A}$-module. The cone length of $M$ is defined to be
the number
$$\mathrm{cl}_AM =
\inf\{\mathrm{\,DGfree\,\,class}_{\mathscr{A}}F\,|\,F \stackrel{\simeq}\to M
 \ \text{is a semi-free resolution of}\  M\}.$$ And we define $\mathrm{cl}_{\mathscr{A}} N=-1$ if $H(N)=0$. }
\end{defn}

Note that $\mathrm{cl}_{\mathscr{A}}M$ may be $+\infty$. Cone length of a DG $\mathscr{A}$-module plays a similar role in DG
homological algebra as projective dimension of a module over a ring
does in classic homological ring theory. This invariant is called
`cone length' because any DG $\mathscr{A}$-module admits semi-free resolutions and the following lemma indicates that
semi-free DG $\mathscr{A}$-modules can be constructed by
iterative cone constructions from DG free $\mathscr{A}$-modules.

\begin{lem}\label{semicone}
Let $F$ be a semi-free DG $\mathscr{A}$-module and let $F'$ be a semi-free DG
submodule of $F$ such that $F/F'=\mathscr{A}\otimes V$ is DG free on a set of
cocycles. Then there exists a DG morphism $f: \mathscr{A}\otimes
\Sigma^{-1}V \to F'$ such that $F = \mathrm{cone}(f)$.
\end{lem}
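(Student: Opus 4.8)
The plan is to realize $F$ explicitly as a mapping cone, by combining a graded splitting of the quotient $F/F'$ with the boundary map that this splitting produces. Throughout I will use the suspension conventions of the excerpt, namely $(\Sigma^i M)^j = M^{j+i}$ and $\partial_{\Sigma^i M}(\Sigma^i m) = (-1)^i\Sigma^i\partial_M(m)$.

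First I would fix a basis $\{v_\alpha\}_\alpha$ of $V$ for which each $1\otimes v_\alpha$ is a cocycle in $F/F'\cong\mathscr{A}\otimes V$, so that the differential of $F/F'$ is $\partial_{\mathscr{A}}\otimes\mathrm{id}$. Since $F/F'$ is DG free, hence graded free, the canonical surjection $\pi\colon F\to F/F'$ admits a splitting of graded $\mathscr{A}^{\#}$-modules: concretely, lift each $v_\alpha$ to an element $\tilde v_\alpha\in F$ with $\pi(\tilde v_\alpha)=v_\alpha$ and $|\tilde v_\alpha|=|v_\alpha|$. This gives $F^{\#}=(F')^{\#}\oplus\bigl(\bigoplus_\alpha\mathscr{A}^{\#}\tilde v_\alpha\bigr)$. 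Because each $1\otimes v_\alpha$ is a cocycle, $\pi(\partial_F(\tilde v_\alpha))=\partial_{F/F'}(v_\alpha)=0$, so $\partial_F(\tilde v_\alpha)\in\ker\pi=F'$.

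Next I would use these boundaries to define the connecting morphism. The DG free module $\mathscr{A}\otimes\Sigma^{-1}V$ has the cocycle semi-basis $\{\Sigma^{-1}v_\alpha\}$ with $|\Sigma^{-1}v_\alpha|=|v_\alpha|+1$. Define $f\colon\mathscr{A}\otimes\Sigma^{-1}V\to F'$ to be the $\mathscr{A}$-linear map determined by $f(\Sigma^{-1}v_\alpha)=\partial_F(\tilde v_\alpha)$; the degrees match since both sides have degree $|v_\alpha|+1$. To check that $f$ is a morphism of DG modules it suffices to verify compatibility with differentials on generators: each $\Sigma^{-1}v_\alpha$ is a cocycle, so $f(\partial(\Sigma^{-1}v_\alpha))=0$, while $\partial_{F'}(f(\Sigma^{-1}v_\alpha))=\partial_F^2(\tilde v_\alpha)=0$. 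The Leibniz rule together with $\mathscr{A}$-linearity then propagates this identity to all of $\mathscr{A}\otimes\Sigma^{-1}V$.

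Finally I would exhibit the isomorphism $F\cong\mathrm{cone}(f)$. As a graded module, $\mathrm{cone}(f)^{\#}=(F')^{\#}\oplus\Sigma(\mathscr{A}\otimes\Sigma^{-1}V)^{\#}$, and the degree count $\Sigma(\mathscr{A}\otimes\Sigma^{-1}V)\cong\mathscr{A}\otimes V$ identifies the second summand with $\mathscr{A}^{\#}\otimes V$; its differential is $\partial_{F'}$ on the first summand and carries the generator $1\otimes v_\alpha$ (the image of $\Sigma(1\otimes\Sigma^{-1}v_\alpha)$) to $f(\Sigma^{-1}v_\alpha)=\partial_F(\tilde v_\alpha)\in F'$. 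I then define $\Phi\colon\mathrm{cone}(f)\to F$ to be the inclusion on $F'$ and $1\otimes v_\alpha\mapsto\tilde v_\alpha$, extended $\mathscr{A}$-linearly. By the graded splitting of the first paragraph, $\Phi$ is an isomorphism of graded $\mathscr{A}^{\#}$-modules, and the differentials agree by the two computations $\Phi(\partial_{\mathrm{cone}}(1\otimes v_\alpha))=\partial_F(\tilde v_\alpha)=\partial_F(\Phi(1\otimes v_\alpha))$ and $\Phi\circ\partial_{\mathrm{cone}}|_{F'}=\partial_F\circ\Phi|_{F'}$, again extended by $\mathscr{A}$-linearity and Leibniz. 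Hence $\Phi$ is an isomorphism of DG modules and $F=\mathrm{cone}(f)$. The part demanding the most care is purely bookkeeping: pinning down the suspension and sign conventions so that the off-diagonal term of $\partial_{\mathrm{cone}}$ is literally $\partial_F(\tilde v_\alpha)$ rather than its negative, and confirming that this choice is consistent with $f$ being a chain map. This is where a careless computation could introduce a spurious sign, but it presents no conceptual difficulty.
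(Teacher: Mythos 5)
Your proposal is correct and follows the same route as the paper's (much terser) proof: choose a basis of $V$ consisting of cocycles, lift it to $F$, define $f$ on $\Sigma^{-1}$ of the generators by the boundaries of the lifts (which land in $F'$), and identify $F$ with $\mathrm{cone}(f)$. The only difference is that you make explicit the lifting step and the verification that the resulting map is an isomorphism of DG modules, both of which the paper leaves implicit.
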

\begin{proof}
Let $\{e_i|i\in I\}$ be a basis of $V$. We define DG morphism $f:
\mathscr{A}\otimes \Sigma^{-1}V \to F'$ by $f(\Sigma^{-1}e_i) =
\partial_F(e_i)$. It is easy to check that
$\partial_{\mathrm{cone}(f)}(e_i) = f(\Sigma^{-1}e_i) =
\partial_F(e_i)$. Hence $F = \mathrm{cone}(f).$
\end{proof}

\begin{prop}
Let $M$ be a DG $\mathscr{A}$-module with $\mathrm{cl}_{\mathscr{A}}M = 0$. If $M'$ is a
direct summand of $M$ such that $H(M')$ is bounded below, then $\mathrm{cl}_{\mathscr{A}}M' =0$.
\end{prop}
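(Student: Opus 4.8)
The plan is to convert the derived direct‑sum decomposition of $M$ into a genuine decomposition of a \emph{minimal} semi-free resolution, and then to apply Lemma \ref{dgfree}. First I would unwind the hypothesis. Since $\mathrm{cl}_{\mathscr{A}}M=0$, there is a semi-free resolution $\varepsilon:F\stackrel{\simeq}{\to}M$ with $\mathrm{DGfree.class}_{\mathscr{A}}F=0$; that is, $F$ is DG free on a cocycle semi-basis, so $F\cong\mathscr{A}^{(Y)}$ for some graded set $Y$ and $\partial_F=\partial_{\mathscr{A}}\otimes\mathrm{id}$. Because $\partial_{\mathscr{A}}(\mathscr{A})\subseteq\frak{m}$, this gives $\partial_F(F)\subseteq\frak{m}F$, so $F$ is in fact a minimal semi-free DG $\mathscr{A}$-module (its own minimal semi-free resolution), and $H(M)\cong H(F)\cong H(\mathscr{A})^{(Y)}$ is bounded below since $H(\mathscr{A})$ is connected.

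Next I would resolve the two summands separately. Writing $M\cong M'\oplus M''$ in $\mathscr{D}(\mathscr{A})$ and applying the additive functor $H(-)$ yields $H(M)\cong H(M')\oplus H(M'')$, so both $H(M')$ and $H(M'')$ are direct summands of the bounded-below module $H(M)$ and hence are themselves bounded below. Thus $M',M''\in\mathscr{D}^{+}(\mathscr{A})$, and by Lemma \ref{exist} they admit minimal semi-free resolutions $g':G'\stackrel{\simeq}{\to}M'$ and $g'':G''\stackrel{\simeq}{\to}M''$. The direct sum $G'\oplus G''$ is again semi-free (concatenate the semi-bases) and minimal, since $\partial_{G'\oplus G''}(G'\oplus G'')\subseteq\frak{m}G'\oplus\frak{m}G''=\frak{m}(G'\oplus G'')$; so $g'\oplus g''$ is a minimal semi-free resolution of $N:=M'\oplus M''\cong M$.

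Then I would identify the two minimal resolutions. Since $F$ is $K$-projective and $F\cong M\cong N$ in $\mathscr{D}(\mathscr{A})$, this derived isomorphism is represented by a genuine quasi-isomorphism $F\to N$, so $F$ is also a semi-free resolution of $N$. By Lemma \ref{homotopycom} there is a homotopy equivalence $h:G'\oplus G''\to F$; let $h'$ be a homotopy inverse, so that $h'\circ h\sim\mathrm{id}$ and $h\circ h'\sim\mathrm{id}$. As both DG modules are minimal semi-free, Lemma \ref{homiso} forces $h'\circ h$ and $h\circ h'$ to be isomorphisms, whence $h$ is an isomorphism and $F\cong G'\oplus G''$ as DG $\mathscr{A}$-modules. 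Now $G'$ is a DG direct summand of the DG free module $F$ and $H(G')\cong H(M')$ is bounded below, so Lemma \ref{dgfree} shows that $G'$ is DG free. Since $G'$ is then a semi-free resolution of $M'$ of DG free class $0$, we conclude $\mathrm{cl}_{\mathscr{A}}M'=0$.

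The step I expect to be the main obstacle is precisely the passage from the derived splitting $M\cong M'\oplus M''$ to the honest DG-module isomorphism $F\cong G'\oplus G''$; this is the uniqueness of minimal semi-free resolutions, which I would extract from Lemma \ref{homotopycom} together with Lemma \ref{homiso}. One edge case should be recorded: if $H(M')=0$ then $G'=0$ and $\mathrm{cl}_{\mathscr{A}}M'=-1$, so the statement is to be read for a non-acyclic summand $M'$ (for which $G'\neq0$ and the DG free class is genuinely $0$).
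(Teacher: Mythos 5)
Your argument breaks down at its very first step, where you assert that $H(M)\cong H(\mathscr{A})^{(Y)}$ is ``bounded below since $H(\mathscr{A})$ is connected.'' This is false: a DG free module is a direct sum of \emph{arbitrary} suspensions of $\mathscr{A}$, and the graded set $Y$ may contain elements of arbitrarily negative degree. For instance, $M=\bigoplus_{i\ge 0}\Sigma^{i}\mathscr{A}$ is DG free on a cocycle basis, so $\mathrm{cl}_{\mathscr{A}}M=0$, yet $H(M)=\bigoplus_{i\ge 0}\Sigma^{i}H(\mathscr{A})$ is unbounded below whenever $H(\mathscr{A})\neq 0$. This is not a cosmetic slip: the proposition deliberately assumes bounded-below cohomology only for the summand $M'$, not for $M$, so the nontrivial case is exactly the one your claim excludes. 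Two essential steps of your proof then fail. First, the complementary summand $M''$ need not lie in $\mathscr{D}^{+}(\mathscr{A})$, so Lemma \ref{exist} does not provide the minimal semi-free resolution $G''$ on which your whole comparison rests. Second, Lemma \ref{homiso}, which you invoke to turn the homotopy equivalence $h\colon G'\oplus G''\to F$ into an isomorphism, requires the DG module in question to be bounded below, and neither $F$ nor $G'\oplus G''$ need be bounded below here.

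The paper's proof shares your endgame (exhibit a minimal semi-free resolution of $M'$ as a DG direct summand of the DG free module $F$, then apply Lemma \ref{dgfree}), but it reaches that point without ever resolving $M''$: it lifts the projection $p\colon M\to M'$ and the inclusion $i\colon M'\to M$ through the resolutions to DG morphisms $g\colon F\to F'$ and $g'\colon F'\to F$, checks that $g\circ g'$ is a quasi-isomorphism, upgrades it to a homotopy equivalence using Lemma \ref{homotopyproj} and Lemma \ref{homotopyinverse}, and then applies Lemma \ref{homiso} only to the endomorphism $g\circ g'$ of $F'$ --- and $F'$ \emph{is} bounded below, precisely because $H(M')$ is assumed bounded below. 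This yields $F'$ as a direct summand of $F$, and Lemma \ref{dgfree} finishes. To salvage your argument you would have to either add the hypothesis that $H(M)$ is bounded below (which genuinely weakens the proposition), or replace the two-sided resolution comparison by the paper's one-sided lifting argument.
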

\begin{proof}
Since $\mathrm{cl}_{\mathscr{A}}M = 0$, $M$ admits a semi-free resolution $f:
F\stackrel{\simeq}{\to} M$ such that $\mathrm{DGfree\,\,class}_{\mathscr{A}}F
=0$. Clearly, $F$ is a DG free $\mathscr{A}$-module and is therefore minimal.

Since $H(M')$ is bounded below, the DG $\mathscr{A}$-module $M'$ has a minimal semi-free resolution $f':F'\to M'$.
Let $p: M \to M'$ and $i:M' \to M$ be the natural projection map and
the inclusion map respectively.

Since both $F$ and $F'$ are semi-free, there are DG morphisms $g:
F\to F'$ and $g': F' \to F$ such that $f'\circ g \sim p\circ f$ and
$f\circ g' \sim i \circ f'$. We have $f'\circ g\circ g' \sim p\circ
f\circ g' \sim p\circ i\circ f' = f'$. Since $f'$ is a
quasi-isomorphism, it is easy to check that $g\circ g'$ is a
quasi-isomorphism. By Lemma \ref{homotopyproj},
$$\Hom_{\mathscr{A}}(F',g\circ g'):\Hom_{\mathscr{A}}(F',F')\to \Hom_{\mathscr{A}}(F',F')$$
is a quasi-isomorphism. There exists $h\in Z^0(\Hom_{\mathscr{A}}(F',F'))$ such that $$\lceil \mathrm{id}_{F'}\rceil=H^0(\Hom_{\mathscr{A}}(F',g\circ g'))(\lceil h\rceil)= \lceil (g\circ g')\circ h\rceil.$$
Hence $(g\circ g')\circ h\simeq \mathrm{id}_{F'}$. Thus $h$ is also a quasi-isomorphism. By Lemma \ref{homotopyproj} again,
$$\Hom_{\mathscr{A}}(F',h):\Hom_{\mathscr{A}}(F',F')\to \Hom_{\mathscr{A}}(F',F')$$ is a quasi-isomorphism. There exists $q\in Z^0(\Hom_{\mathscr{A}}(F',F'))$ such that $$\lceil \mathrm{id}_{F'}\rceil=H^0(\Hom_{\mathscr{A}}(F',h))(\lceil q\rceil)= \lceil h\circ q\rceil.$$
So $h\circ q\sim \mathrm{id}_{F'}$. By Lemma \ref{homotopyinverse}, $h$ is a homotopy equivalence and $g\circ g'$ is a homotopy inverse of $h$.
Hence $g\circ g'$ is also a homotopy equivalence.
By Lemma \ref{homiso},
$g\circ g'$ is an isomorphism since $F'$ is minimal. This implies that $F'$ is a direct
summand of $F$. By Lemma \ref{dgfree},
$\mathrm{DGfree\,\,class}_{\mathscr{A}}F'=0$. Therefore $\mathrm{cl}_{\mathscr{A}}M' = 0$.
\end{proof}

\begin{prop}\label{finitecl}
Let $\mathscr{A}$ be a connected DG algebra such that
$\mathrm{cl}_{\mathscr{A}^e}\mathscr{A}< \infty$. Then for any DG $\mathscr{A}$-module $M$,  we
have $\mathrm{cl}_{\mathscr{A}}M\le \mathrm{cl}_{\mathscr{A}^e}\mathscr{A}$.
\end{prop}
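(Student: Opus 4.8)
The plan is to transport a bimodule semi-free resolution of $\mathscr{A}$ to a semi-free resolution of $M$ by tensoring, exactly as one bounds global dimension by Hochschild dimension in classical ring theory. Write $n=\mathrm{cl}_{\mathscr{A}^e}\mathscr{A}$, which is finite by hypothesis, and fix a semi-free resolution $X\stackrel{\simeq}{\to}\mathscr{A}$ of the diagonal DG $\mathscr{A}^e$-module $\mathscr{A}$ with $\mathrm{DGfree.class}_{\mathscr{A}^e}X=n$, together with a strictly increasing semi-free filtration $0=X(-1)\subset X(0)\subset\cdots\subset X(n)=X$ whose quotients $X(i)/X(i-1)=\mathscr{A}^e\otimes V(i)$ are DG free on cocycles.

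First I would record two structural facts. As in the proof of Lemma \ref{aelem}, restricting $X$ along $\mathscr{A}\to\mathscr{A}^e$ on the right-hand factor makes $X$ a $K$-projective, hence $K$-flat, right DG $\mathscr{A}$-module; consequently the map $X\otimes_{\mathscr{A}}M\to\mathscr{A}\otimes_{\mathscr{A}}M=M$ induced by $X\stackrel{\simeq}{\to}\mathscr{A}$ is a quasi-isomorphism for every $M$, so $X\otimes_{\mathscr{A}}M$ is a model for $M$ in $\mathscr{D}(\mathscr{A})$. Second, and this is the crux, I claim each free bimodule becomes DG free after tensoring: there is an isomorphism of DG $\mathscr{A}$-modules $\mathscr{A}^e\otimes_{\mathscr{A}}M\cong\mathscr{A}\otimes_k M$, obtained by pushing the $\mathscr{A}^{op}$-factor across the tensor onto $M$ (one checks the differentials correspond under $(a\otimes 1)\otimes m\mapsto a\otimes m$). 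Since $k$ is a field we have $M\simeq H(M)$ as DG $k$-modules, while $\mathscr{A}\otimes_k-$ preserves quasi-isomorphisms; therefore $\mathscr{A}\otimes_k M\simeq\mathscr{A}\otimes_k H(M)$, which is a direct sum of suspensions of $\mathscr{A}$, i.e.\ DG free. Hence for any graded $k$-space $V$ with zero differential, $(\mathscr{A}^e\otimes V)\otimes_{\mathscr{A}}M$ is DG free up to quasi-isomorphism, so its cone length over $\mathscr{A}$ is at most $0$.

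Next I would run the cone construction. By Lemma \ref{semicone} there are DG morphisms $f_i:\mathscr{A}^e\otimes\Sigma^{-1}V(i)\to X(i-1)$ with $X(i)=\mathrm{cone}(f_i)$. Because the mapping cone is assembled from direct sums and a shifted differential, the functor $-\otimes_{\mathscr{A}}M$ commutes with it, giving $X(i)\otimes_{\mathscr{A}}M=\mathrm{cone}(f_i\otimes_{\mathscr{A}}M)$ and hence a triangle $P_i\to X(i-1)\otimes_{\mathscr{A}}M\to X(i)\otimes_{\mathscr{A}}M\to\Sigma P_i$ in $\mathscr{D}(\mathscr{A})$, where $P_i=(\mathscr{A}^e\otimes\Sigma^{-1}V(i))\otimes_{\mathscr{A}}M$ is quasi-isomorphic to a DG free module by the previous paragraph. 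Then I would induct on $i$: the base case $X(0)\otimes_{\mathscr{A}}M=(\mathscr{A}^e\otimes V(0))\otimes_{\mathscr{A}}M$ has cone length $\le 0$, and for the inductive step, realizing $P_i$ by its DG free model $P_i'$ and lifting the connecting map to a DG morphism $P_i'\to G_{i-1}$, where $G_{i-1}$ is a semi-free resolution of $X(i-1)\otimes_{\mathscr{A}}M$ of $\mathrm{DGfree.class}_{\mathscr{A}}\le i-1$ (possible since $P_i'$ is $K$-projective), the module $\mathrm{cone}(P_i'\to G_{i-1})$ is a semi-free resolution of $X(i)\otimes_{\mathscr{A}}M$ carrying a semi-free filtration obtained by appending one DG free layer on top of a length-$(i-1)$ filtration; thus $\mathrm{cl}_{\mathscr{A}}(X(i)\otimes_{\mathscr{A}}M)\le i$. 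Taking $i=n$ and using $X\otimes_{\mathscr{A}}M\simeq M$ yields $\mathrm{cl}_{\mathscr{A}}M\le n=\mathrm{cl}_{\mathscr{A}^e}\mathscr{A}$.

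The main obstacle is the second structural fact above: showing that tensoring a free DG $\mathscr{A}^e$-module with $M$ produces a module of cone length $0$ over $\mathscr{A}$. In the classical setting $A\otimes_k M$ is literally a free $A$-module, but in the DG setting it carries the internal differential of $M$, so one must pass to $\mathscr{D}(\mathscr{A})$ and use that over a field $M$ collapses to its cohomology; this is precisely where the hypothesis that $k$ is a field is indispensable. The remaining bookkeeping --- that appending a single DG free layer raises the length of a semi-free filtration by exactly one, and that the honest tensor product computes the derived one here --- is routine given Lemma \ref{semicone} and the $K$-flatness of $X$.
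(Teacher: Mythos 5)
Your proposal is correct and follows essentially the same route as the paper: the same cone-by-cone induction along a length-$n$ semi-free filtration of $X$, with the same key observation that $(\mathscr{A}^e\otimes V)\otimes_{\mathscr{A}}M\cong\mathscr{A}\otimes_k(V\otimes_k M)$ is quasi-isomorphic to the DG free module $\mathscr{A}\otimes_k(V\otimes_k H(M))$ because $k$ is a field (the paper realizes this via an explicit quasi-isomorphism $\phi_i$ built from cocycle representatives of a basis of $H(F)$). The only cosmetic difference is that you tensor with $M$ directly and justify $X\otimes_{\mathscr{A}}M\simeq M$ by the $K$-flatness of $X$ over $\mathscr{A}$, whereas the paper tensors with a semi-free resolution $F$ of $M$.
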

\begin{proof}
Let $\mathrm{cl}_{\mathscr{A}^e}\mathscr{A}=n$. By the definition of cone length, the DG
$\mathscr{A}^e$-module $\mathscr{A}$ admits a semi-free resolution $X$ such that
$\mathrm{DGfree\,\,class}_{\mathscr{A}^e}X = n$. This implies that $X$ admits
a strictly increasing semi-free filtration
$$0=X(-1)\subset X(0)\subset X(1)\subset\cdots \subset X(n)=X,$$
where $X(0)= \mathscr{A}^e\otimes V(0)$ and $X(i)/X(i-1)\cong \mathscr{A}^e\otimes
V(i)$ is a DG free $\mathscr{A}^e$-module, $i=0,\cdots,n$. Let $E_i=\{e_{i_j}|j\in
I_i\}, i \ge 0,$ be a basis of $V(i)$. For any $i\ge 1$, define
$f_i: \mathscr{A}^e\otimes \Sigma^{-1}V(i)\to X(i-1)$ such that
$f_i(\Sigma^{-1}e_{i_j}) =
\partial_{X(i)}(e_{i_j})$. By Lemma \ref{semicone}, $X(i) \cong
\mathrm{cone}(f_i), i=1, 2,\cdots, n$.

For any DG $\mathscr{A}$-module $M$, let $\varrho_M:F\to M$ be a semi-free resolution of $M$. As
a DG $\mathscr{A}$-module, $X(i)\otimes_{\mathscr{A}}F\cong \mathrm{cone}(f_i\otimes_{\mathscr{A}}
\mathrm{id}_ {F})$, $i=1, 2,\cdots, n$.
 Since $\mathscr{A}^e\otimes_{\mathscr{A}}F\cong \mathscr{A}\otimes F$, we have
$$(\mathscr{A}^e\otimes V(i))\otimes_{\mathscr{A}}F\cong \mathscr{A}\otimes V(i)\otimes F,
\quad i= 0,1,\cdots, n.$$ Choose a subset $Z\subseteq F$ such
that each element $z\in Z$ is a cocycle and $\{\lceil z\rceil|z\in Z \}$ is a basis of the
$k$-vector space $H(F)$. Define a DG morphism
$$\phi_i: \mathscr{A}\otimes V(i)\otimes H(F)\to
\mathscr{A}\otimes V(i)\otimes F$$ such that $\phi_i(a \otimes v \otimes
\lceil z\rceil)=a \otimes v \otimes z$, for any $a \in \mathscr{A}, v \in V(i)$ and
$\lceil z\rceil$. It is easy to check that $\phi_i$ is a quasi-isomorphism.

In the following, we prove inductively that $\mathrm{cl}_{\mathscr{A}}
(X(i)\otimes_{\mathscr{A}}F)\le i, i= 0,1,\cdots, n$. Since $\phi_0: \mathscr{A}\otimes
V(0)\otimes H(F)\to X(0)\otimes_{\mathscr{A}}F$ is a quasi-isomorphism, we
have $\mathrm{cl}_{\mathscr{A}} (X(0)\otimes_{\mathscr{A}} F) = 0$. Suppose inductively
that we have proved that $$\mathrm{cl}_{\mathscr{A}}(X(l)\otimes_{\mathscr{A}}F)\le l,\,
l\ge 0.$$ We should prove $\mathrm{cl}_{\mathscr{A}}(X(l+1)\otimes_{\mathscr{A}}F)\le
l+1$. Since $\mathrm{cl}_{\mathscr{A}}(X(l)\otimes_{\mathscr{A}}F)\le l$, there is a
semi-free resolution $\varphi_l:G_l\stackrel{\simeq}{\to}
X(l)\otimes_{\mathscr{A}}F$ such that $\mathrm{DGfree\,\,class}_{\mathscr{A}}G_l\le l$.
 Because $\mathscr{A}\otimes\Sigma^{-1}V(l+1)\otimes H(F)$ is semi-free,
there is a DG morphism
$$\psi_{l}:\mathscr{A}\otimes\Sigma^{-1}V(l+1)\otimes H(F)\to
G_l$$ such that $\varphi_l\circ \psi_l \sim
(f_l\otimes_{\mathscr{A}}\mathrm{id}_{F})\circ \Sigma^{-1}(\phi_{l+1})$.

 For convenience, we write
$Q(l+1)=\mathscr{A}\otimes V(l+1)$ and $K(l+1) =\mathscr{A}^e\otimes V(l+1)$. In
$\mathscr{D}(\mathscr{A})$, there is a morphism
$h_{l+1}:\mathrm{cone}(\psi_l)\to X(l+1)\otimes_{\mathscr{A}}F$ making the
diagram
\begin{tiny}
\begin{align*}
\xymatrix{\Sigma^{-1}Q(l+1)\otimes H(F)
\ar[r]^{\psi_l}\ar[d]^{\Sigma^{-1}(\phi_{l+1})} & G_l
\ar[d]^{\varphi_l}\ar[r]^{\tau_{l}}&\mathrm{cone}(\psi_l)\ar[d]^{\exists
h_{l+1}}\ar[r]^{\varepsilon_l}&Q(l+1)\otimes_{k}H(F)\ar[d]^{\phi_{l+1}}
\\
\Sigma^{-1}K(l+1)\otimes_{\mathscr{A}}F \ar[r]^{f_l\otimes_{\mathscr{A}}\mathrm{id}_{F}}
&X(l)\otimes_{\mathscr{A}}F \ar[r]^{\iota_l}&X(l+1)\otimes_{\mathscr{A}}F
\ar[r]^{\pi_l}& K(l+1)\otimes_{\mathscr{A}}F
\\}
\end{align*}
\end{tiny}
commute. By five-lemma, $h_{l+1}$ is an isomorphism in
$\mathscr{D}(\mathscr{A})$. This implies that there are quasi-isomorphisms
$g:Y\to \mathrm{cone}(\psi_l)$ and $t:Y\to X(l+1)\otimes_{\mathscr{A}}F$,
where $Y$ is some DG $\mathscr{A}$-module. Hence $ \mathrm{cl}_{\mathscr{A}}
(X(l+1)\otimes_{\mathscr{A}}F)=\mathrm{cl}_{\mathscr{A}}Y =
\mathrm{cl}_{\mathscr{A}}\mathrm{cone}(\psi_l)\le l+1 $.
By induction, we have $\mathrm{cl}_{\mathscr{A}}(X\otimes_{\mathscr{A}}F)\le n$.
Since $F\simeq X\otimes_{\mathscr{A}}F$, we get $\mathrm{cl}_{\mathscr{A}}M\le n$.
\end{proof}

\begin{thm}\label{mindggf}
Let $M$ be an object in $\mathscr{D}^{+}(\mathscr{A})$ such that $\mathrm{cl}_{\mathscr{A}}M<\infty$, then there is a minimal semi-free resolution $G$ of $M$ such that $\mathrm{DGfree.class}_{\mathscr{A}}G=\mathrm{cl}_{\mathscr{A}}M$.
\end{thm}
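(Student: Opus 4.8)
The plan is to realize the cone length by the canonical minimal resolution produced by Lemma \ref{semidecomp}, and then to bound its DG free class from above by transporting an optimal filtration across the splitting. Set $n=\mathrm{cl}_{\mathscr{A}}M$, finite by hypothesis. Since the cone length is the infimum of a set of nonnegative integers, it is attained: there is a semi-free resolution $\varepsilon:F\stackrel{\simeq}{\to}M$ with $\mathrm{DGfree.class}_{\mathscr{A}}F=n$. Because $M\in\mathscr{D}^{+}(\mathscr{A})$, the cohomology $H(F)\cong H(M)$ is bounded below, so Lemma \ref{semidecomp} applies and yields a minimal semi-free resolution $G$ of $F$ (hence of $M$) together with a decomposition $F\cong G\oplus Q$ in which $Q$ is a homotopically trivial DG submodule of $F$. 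As $G$ is itself a semi-free resolution of $M$, the definition of cone length gives $\mathrm{DGfree.class}_{\mathscr{A}}G\ge\mathrm{cl}_{\mathscr{A}}M=n$. Thus the whole content is the reverse inequality $\mathrm{DGfree.class}_{\mathscr{A}}G\le n$.

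For the upper bound I would fix a strictly increasing semi-free filtration of $F$ of minimal length,
$$0=F(-1)\subset F(0)\subset\cdots\subset F(n)=F,\qquad F(i)/F(i-1)=\mathscr{A}\otimes V(i)\ \text{DG free on cocycles},$$
and push it forward along the projection $\pi:F\to G$ with kernel $Q$, setting $G(i):=\pi(F(i))$. This is an increasing chain of DG submodules of $G$ with $G(-1)=0$ and $G(n)=G$, so the only thing to verify is that each subquotient $G(i)/G(i-1)$ is again DG free on cocycles; granting this, the chain is a semi-free filtration of length at most $n$ and we are done. Writing $Q_i=F(i)\cap Q$, one has $G(i)\cong F(i)/Q_i$ and hence $G(i)/G(i-1)\cong (\mathscr{A}\otimes V(i))/\overline{Q_i}$, a quotient of the DG free layer $F(i)/F(i-1)$ by the image $\overline{Q_i}$ of $Q_i/Q_{i-1}$. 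The base case $n=0$ is already clean and indicates that the whole approach is sound: there $F$ is DG free, $G$ is a DG free direct summand of $F$ with $H(G)$ bounded below, and Lemma \ref{dgfree} forces $G$ to be DG free, so $\mathrm{DGfree.class}_{\mathscr{A}}G=0$.

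The main obstacle is exactly this freeness of the projected subquotients: images and quotients of DG free modules need not be DG free over a general connected DG algebra, and, as Example \ref{secex} shows, even a minimal module can carry standard filtrations of very different lengths, so one cannot read off the DG free class from a single semi-basis. I would resolve it by first establishing the structural fact that a homotopically trivial semi-free DG module is, up to isomorphism, a trivial module, i.e.\ a direct sum of two-generator disks $\mathscr{A}\otimes(k\stackrel{\cong}{\to}k)$, and then running an induction on $n$ built on the cone presentation of Lemma \ref{semicone}, writing $F=\mathrm{cone}(f)$ with $f:\mathscr{A}\otimes\Sigma^{-1}V(n)\to F(n-1)$. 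The inductive step uses the disk structure of $Q$ to split its contribution $\overline{Q_n}$ off the top layer $\mathscr{A}\otimes V(n)$ as a DG free summand, so that the induced top layer of $G$ stays DG free, while $F(n-1)$ together with its minimal part is controlled by the inductive hypothesis. Combining $\mathrm{DGfree.class}_{\mathscr{A}}G\le n$ with the trivial inequality then gives $\mathrm{DGfree.class}_{\mathscr{A}}G=n=\mathrm{cl}_{\mathscr{A}}M$, exhibiting $G$ as the desired minimal semi-free resolution.
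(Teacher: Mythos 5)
Your setup reproduces the paper's proof exactly: realize $n=\mathrm{cl}_{\mathscr{A}}M$ by a resolution $F$, split $F\cong G\oplus Q$ with $G$ minimal and $Q$ homotopically trivial via Lemma \ref{semidecomp}, observe $\mathrm{DGfree.class}_{\mathscr{A}}G\ge n$ from the definition of cone length, and push the optimal filtration of $F$ into $G$ (your $G(i)=\pi(F(i))$ is the paper's $G(i)=\sum_{r\le i}\sum_j \mathscr{A}g_{r_j}$). You also correctly isolate the crux: each layer $G(i)/G(i-1)$ must be DG free on a basis of cocycles. But at that crux your text stops being a proof and becomes a plan, and the plan has concrete holes. (a) The structural fact you invoke --- that a homotopically trivial semi-free DG module is a direct sum of disks $\mathrm{cone}(\mathrm{id}_{\Sigma^j\mathscr{A}})$ --- is asserted, not proved; the natural argument (lift a disk decomposition of $k\otimes_{\mathscr{A}}Q$ by graded Nakayama, Lemma \ref{naklem}) needs $Q$ to be bounded below as a graded module, and nothing guarantees this: $F$ is an arbitrary cone-length-realizing resolution, so $F$, hence its summand $Q$, need not be bounded below (only the minimal summand $G$ is, by Lemma \ref{exist}). (b) The inductive step is one sentence and does not work as stated: the inductive hypothesis concerns objects of $\mathscr{D}^{+}(\mathscr{A})$, but $H(F(n-1))$ need not be bounded below; even granting a disk decomposition of $Q$, it need not be compatible with the filtration $F(\bullet)$, so there is no argument that the image $\overline{Q_n}$ of $F(n)\cap Q$ in the top layer $\mathscr{A}\otimes V(n)$ is a DG free direct summand generated by cocycles --- and controlling a quotient of a DG free module by a possibly non-split submodule is precisely the obstacle you set out to overcome; finally, the hypothesis would at best produce \emph{some} minimal model of $F(n-1)$ with a short filtration, whereas you need a filtration of the fixed module $G$, and no gluing argument is supplied.

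For contrast, the paper closes this gap by a different mechanism: decomposing each semi-basis element $e_{i_j}=g_{i_j}+q_{i_j}$ along $F=G\oplus Q$, it asserts the componentwise splitting $F(r)=[\sum\mathscr{A}g_{i_j}]\oplus[\sum\mathscr{A}q_{i_j}]$, so that each layer of the induced filtration of $G$ is a direct \emph{summand} (not merely a quotient) of the DG free layer $F(r)/F(r-1)$; Lemma \ref{dgfree} then gives DG freeness of the layers, and the inequality $\mathrm{cl}_{\mathscr{A}}M\le\mathrm{DGfree.class}_{\mathscr{A}}G$ is used to rule out degenerate (zero) layers, yielding a strictly increasing filtration of length at most $n$. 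If you wish to salvage your route, you must actually prove the disk-decomposition lemma in the required (not necessarily bounded below) generality and, more importantly, prove a compatibility between that decomposition and the filtration $F(\bullet)$ --- which is essentially the same difficulty the paper addresses with its componentwise splitting, in different clothing.
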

\begin{proof}
Let $\mathrm{cl}_{\mathscr{A}}M=t$ and $b=\inf\{i|H^i(M)\neq 0\}$. There exists a semi-free resolution $P$ of $M$ such that $\mathrm{DGfree.class}_{\mathscr{A}}P=t$.
By \cite[Proposition 2.4]{MW1}, $M$ admits a minimal semi-free resolution $G$ with $G^{\#}\cong
\coprod_{i\ge b}\Sigma^{-i}(\mathscr{A}^{\#})^{(\Lambda^i)}$
each $\Lambda^i$ is an index set.
We have $P\cong G\oplus Q$ by Lemma \ref{semidecomp}, where $Q$ is a homotopically trivial DG $\mathscr{A}$-module.
 Set $F=G\oplus Q$. Then $\mathrm{DGfree\,\, class}_{\mathscr{A}}F =t$ and hence
 $F$ admits a semi-free filtration
$$0=F(-1)\subset F(0)\subset F(1)\subset \cdots \subset F(t)=F.$$
Let $E=\bigsqcup\limits_{i=0}^t E_i$ be a semi-basis of $F$ with respect to the semi-free filtration above.
For any $i\in \{1,2,\cdots, t\}$, let $E_i=\{e_{i_j}|j\in I_i \}$ and $F_i=F(i)/F(i-1)$. We have
 $\partial_F(e_{i_j})\subseteq \mathscr{A}(\bigsqcup\limits_{j=0}^{i-1}E_j)$.
Then each graded free $\mathscr{A}^{\#}$-module $F(r)^{\#}$ can be decomposed as
$$\bigoplus\limits_{i=0}^r\bigoplus\limits_{j\in I_i}\mathscr{A}^{\#}e_{i_j}, 0\le r\le t.$$
Let $e_{i_j}=g_{i_j}+q_{i_j}$, where $g_{i_j}\in G$ and $q_{i_j}\in Q$, for any $j\in I_i, i=0,1,\cdots, t$. We have
$$F(r)=[\sum\limits_{i=0}^r(\sum\limits_{j\in I_i}\mathscr{A}g_{i_j})]\oplus [\sum\limits_{i=0}^r(\sum\limits_{j\in I_i}\mathscr{A}q_{i_j})], \,\,\, 0\le r\le t.$$
Hence
$$F_r=F(r)/F(r-1)=(\sum\limits_{j\in I_r}\mathscr{A}\overline{g_{r_j}})\oplus (\sum\limits_{j\in I_r}\mathscr{A}\overline{q_{r_j}}),\,\,\, 0\le r\le t.$$
By Lemma \ref{dgfree}, $\sum\limits_{j\in I_r}\mathscr{A}\overline{g_{r_j}}$ is either a zero module or a DG free $\mathscr{A}$-module, for any $r=0,1,\cdots, t$.
Let $\omega_{r_{\lambda}}, \lambda\in \Lambda_r$ be its DG free basis ($\Lambda_r=\emptyset$ and $\omega_{r_{\lambda}}=0$ if $\sum\limits_{j\in I_r}\mathscr{A}\overline{g_{r_j}}=0$). Then $\sum\limits_{j\in I_r}\mathscr{A}\overline{g_{r_j}}=\bigoplus\limits_{j\in \Lambda_r}\mathscr{A}\omega_{r_j}$.
Note that $$\bigoplus\limits_{i\in I_r} \mathscr{A}^{\#}e_{r_i}=\bigoplus\limits_{\lambda\in \Lambda_r}\mathscr{A}^{\#}\omega_{r_{\lambda}}\oplus (\sum\limits_{j\in I_r}\mathscr{A}\overline{q_{r_j}})^{\#}$$
is a graded $\mathscr{A}^{\#}$-submodule of $F^{\#}$. So $\bigoplus\limits_{\lambda\in \Lambda_r}\mathscr{A}^{\#}\omega_{r_{\lambda}}$ is also a graded $\mathscr{A}^{\#}$-submodule of $F^{\#}$. Since $\partial_{F_r}(\omega_{r_{\lambda}})=0$, we have $$\partial_F(\omega_{r_{\lambda}})\in F(r-1)\cap \sum\limits_{i=0}^r(\sum\limits_{j\in I_i}\mathscr{A}g_{i_j})=\sum\limits_{i=0}^{r-1}(\sum\limits_{j\in I_i}\mathscr{A}g_{i_j}), \,\, 0\le r \le t.$$
Let $G(r)=\sum\limits_{i=0}^r(\sum\limits_{j\in I_i}\mathscr{A}g_{i_j}), r=0,1,\cdots, t$. Then
$$0\subseteq G(0)\subseteq G(1)\subseteq G(2)\subseteq \cdots \subseteq G(t)=G$$ is a filtration of DG $\mathscr{A}$-submodules of $G$.
Moreover, $G(r)/G(r-1)=\sum\limits_{j\in I_r}\mathscr{A}\overline{g_{r_j}}$ is either zero or a DG free $\mathscr{A}$-module
$\bigoplus\limits_{j\in \Lambda_r}\mathscr{A}\omega_{r_j}.$ If $G(r)/G(r-1)=0$, for some $r\in \{0,1,\cdots, t\}$, then we just cancel such $G(r)$. In this way, we can get a strictly increasing semi-free filtration with length smaller than $t$. Then $\mathrm{DGfree.class}_{\mathscr{A}}G<t=\mathrm{cl}_{\mathscr{A}}M$. It contradicts with $$\mathrm{cl}_{\mathscr{A}}M=\inf\{\mathrm{DGfree.class}_{\mathscr{A}}P| P\stackrel{\sim}{\to} M \,\text{is a semi-free resolution}\}\le \mathrm{DGfree.class}_{\mathscr{A}}G.$$
Therefore, $$0\subset G(0)\subset G(1)\subset G(2)\subset \cdots \subset G(t)=G$$
is a strictly increasing semi-free filtration of $G$. Then $\mathrm{DGfree.class}_{\mathscr{A}}G\le t$. On the other hand,
$t=\mathrm{cl}_{\mathscr{A}}M\le \mathrm{DGfree.class}_{\mathscr{A}}G$. Hence $ \mathrm{DGfree.class}_{\mathscr{A}}G=t$.
\end{proof}

In ring theory and homological algebra, it is well known that  the
global dimension of a ring $R$ is defined to be the supremum of the
set of projective dimensions of all $R$-modules. Since the invariant
cone length of a DG $\mathscr{A}$-module plays a similar role in DG
homological algebra as projective dimension of a module over a ring
does in homological ring theory, the following definition is reasonable to some extent.
\begin{defn} \cite{MW3}\label{defgldim} {\rm Let $\mathscr{A}$ be a connected cochain DG algebra. The left global
dimension and the right global dimension of $\mathscr{A}$ are respectively
defined by
$$l.\mathrm{Gl.dim}\mathscr{A} = \sup\{\mathrm{cl}_{\mathscr{A}}M| M\in \mathscr{D}(\mathscr{A})\}$$
and
$$r.\mathrm{Gl.dim}\mathscr{A} =
\sup\{\mathrm{cl}_{\mathscr{A}\!^{op}}M| M\in \mathscr{D}(\mathscr{A}\!^{op})\}.$$}
\end{defn}
Let $\mathscr{A}$ be a connected cochain DG algebra such that $H(\mathscr{A})$ is a graded algebra with finite global dimension.
 Then by the existence of Eilenberg-Moore resolution, one sees that any DG $\mathscr{A}$-module admits a semi-free resolution whose DG free class is not bigger than $\mathrm{gl.dim}H(\mathscr{A})$. So $l.\mathrm{Gl.dim}\mathscr{A} \le
\mathrm{gl.dim}H(\mathscr{A})$. If we assume in addition that $H(\mathscr{A})$ is Noetherian, then any cohomologically finitely generated DG $\mathscr{A}$-module is compact. Especially, the DG algebra $\mathscr{A}$ is homologically smooth by Lemma \ref{homsmooth}.  We emphasize that there are homologically smooth connected cochain DG algebras whose cohomology graded algebras are Noetherian graded algebras with infinite global dimension (see \cite[Example 3.12]{MW2}). So the homologically smoothness of $\mathscr{A}$ is weaker than $\mathrm{gl.dim}H(\mathscr{A})<\infty$ when $H(\mathscr{A})$ is Noetherian. Beside these, we have the following interesting results.

\begin{rem}\label{glineq}
We can similarly prove the following results as in \cite{MW3}.
\begin{enumerate}
\item $l.\mathrm{Gl.dim}\mathscr{A} =0$ if and only if $H(\mathscr{A})\simeq k$.
\item If $\partial_{\mathscr{A}}=0$, then $l.\mathrm{Gl.dim}\mathscr{A} =
\mathrm{gl.dim}\mathscr{A}^{\#}=r.\mathrm{Gl.dim}\mathscr{A}.$
\item $\mathrm{cl}_{\mathscr{A}}k =1$ if and only if
$l.\mathrm{Gl.dim}\mathscr{A} =1$ if and only if $\mathrm{gl.dim}H(\mathscr{A})=1.$
\item If $\mathrm{gl.dim}H(\mathscr{A})=2$, then
$l.\mathrm{Gl.dim}\mathscr{A} = \mathrm{cl}_{\mathscr{A}}k=2.$
\item If either $\mathrm{cl}_{\mathscr{A}}k $ or
$\mathrm{gl.dim}H(\mathscr{A})$ is finite and equals to
$\mathrm{depth}_{H(\mathscr{A})}H(\mathscr{A})$, then
$$l.\mathrm{Gl.dim}\mathscr{A} = \mathrm{gl.dim}H(\mathscr{A}) = \mathrm{cl}_{\mathscr{A}}k.$$
\end{enumerate}
Note that the DG algebras considered in \cite{MW3} are Adams connected DG algebras, which are a family of bigraded algebras. Although the DG algebras studied here are different from those in \cite{MW3}, the original proofs of the results above in \cite{MW3} are suitable to connected cochain DG algebras. The reason for this is because these two kinds of DG algebras admit unique maximal DG ideals and their underlying graded algebras are essentially connected graded algebras.
\end{rem}

\section{some criteria of homologically smooth dg algebras}
In DG homological algebra, homologically smooth DG algebras are fundamental and important as regular algebras in homological ring theory.
The motivation of this section is to figure out some criteria for a connected cochain DG algebra to be homologically smooth. The following
proposition will be useful for this purpose.
\begin{prop}\label{fgtocom}
Let $\mathscr{A}$ be a connected cochain DG algebra such that $H(\mathscr{A})$ is a Noetherian graded algebra. If $G$ is a minimal semi-free DG $\mathscr{A}$-module with finite DG free class in $\mathscr{D}_{fg}(\mathscr{A})$, then $G \in \mathscr{D}^c(\mathscr{A})$.
\end{prop}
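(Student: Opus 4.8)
The plan is to reduce compactness to a finiteness statement about the semi-basis and then induct on the DG free class. By Lemma~\ref{comp} together with Remark~\ref{charcomp}, $G$ is compact if and only if $\dim_k(k\otimes_{\mathscr{A}}G)<\infty$; since $G$ is minimal, $k\otimes_{\mathscr{A}}G$ carries the zero differential, so this dimension is exactly the cardinality of a semi-basis of $G$. I would therefore argue by induction on $t=\mathrm{DGfree.class}_{\mathscr{A}}G$, using a standard semi-free filtration $0=G(-1)\subset G(0)\subset\cdots\subset G(t)=G$ with each $G(i)/G(i-1)=\mathscr{A}\otimes V(i)$ DG free, and the linearly split short exact sequence $0\to G(0)\to G\to \bar G\to 0$, where $\bar G=G/G(0)$ inherits minimality and satisfies $\mathrm{DGfree.class}_{\mathscr{A}}\bar G\le t-1$.

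The heart of the argument, and the step I expect to be the main obstacle, is to show that the bottom free layer $V(0)$ is finite dimensional; here I would exploit minimality directly. Each $v\in V(0)$ is a cocycle (as $\partial(E_0)=0$), so it determines a class $[v]\in H(G)$, and I claim the resulting graded map $V(0)\to H(G)/H^{\geq 1}(\mathscr{A})\cdot H(G)$ is injective. To see this, suppose a homogeneous combination $w=\sum_v c_v v$ satisfies $[w]\in H^{\geq 1}(\mathscr{A})\cdot H(G)$, say $w=\sum_i \alpha_i w_i+\partial_G(u)$ with the $\alpha_i$ cocycles in $\frak{m}$ and the $w_i$ cocycles in $G$. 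Applying the projection $G\to k\otimes_{\mathscr{A}}G$ kills every $\alpha_i w_i$ (they lie in $\frak{m}G$) and kills $\partial_G(u)$ (the differential of $k\otimes_{\mathscr{A}}G$ vanishes by minimality), while it sends $w$ to $\sum_v c_v\bar v$, a combination of distinct semi-basis images, hence nonzero unless all $c_v=0$. This forces every $c_v$ to vanish and proves injectivity. Since $H(G)$ is finitely generated over the connected graded algebra $H(\mathscr{A})$, graded Nakayama gives $\dim_k\big(H(G)/H^{\geq 1}(\mathscr{A})\cdot H(G)\big)<\infty$, whence $\dim_k V(0)<\infty$.

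Once $V(0)$ is finite dimensional, $G(0)=\mathscr{A}\otimes V(0)$ has a finite semi-basis and is therefore compact, so in particular $G(0)\in\mathscr{D}^c(\mathscr{A})\subseteq\mathscr{D}_{fg}(\mathscr{A})$. Because $\mathscr{D}_{fg}(\mathscr{A})$ is a triangulated subcategory and the split exact sequence $0\to G(0)\to G\to\bar G\to 0$ yields a distinguished triangle, the two-out-of-three property gives $\bar G\in\mathscr{D}_{fg}(\mathscr{A})$. As $\bar G$ is minimal semi-free with $\mathrm{DGfree.class}_{\mathscr{A}}\bar G\le t-1$, the induction hypothesis makes $\bar G$ compact; and since $\mathscr{D}^c(\mathscr{A})$ is a thick subcategory closed under the same triangle, $G$ is compact as well. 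The base case $t=0$ is precisely the finiteness of $V(0)$ established above applied to $G=G(0)$, so the induction closes and the proposition follows.
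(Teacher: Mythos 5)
Your proof is correct, and its overall skeleton matches the paper's: induct on the length of a semi-free filtration, use minimality to control the bottom DG free layer, and use Noetherianity of $H(\mathscr{A})$ to keep the relevant quotients cohomologically finite. The genuine difference lies in how the finiteness of the bottom layer $V(0)$ is established. The paper considers the inclusion $\iota_0: G(0)\to G$, notes that $\operatorname{im}H(\iota_0)$ is a finitely generated submodule of the Noetherian module $H(G)$, and argues by contradiction that infinitely many basis cocycles would have to land in $\ker H(\iota_0)$, hence be boundaries in $G$, contradicting minimality; it then repeats this layer by layer inside the quotients $G/G(i-1)$ rather than invoking an induction hypothesis on the whole statement. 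You instead map $V(0)$ into $H(G)/H^{\geq 1}(\mathscr{A})\cdot H(G)$ and prove injectivity by passing to $k\otimes_{\mathscr{A}}G$, where minimality kills both the $\frak{m}$-multiples and the boundaries; finite generation of $H(G)$ then bounds the target. Your version is cleaner at the one delicate point of the paper's argument (the deduction that basis elements outside a finite supporting set lie in $\ker H(\iota_0)$ is stated rather tersely there), and it isolates exactly where minimality and finite generation enter; the paper's version has the minor advantage of never leaving the single filtration, so it does not need to re-verify that the quotient $G/G(0)$ is again a minimal semi-free object of $\mathscr{D}_{fg}(\mathscr{A})$ of smaller DG free class --- a step you do carry out correctly via the long exact sequence and Noetherianity.
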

\begin{proof}

 Let $\mathrm{DGfree\,\,class}_{\mathscr{A}}G=t<\infty$. Then $G$ admits a semi-free filtration
$$0=G(-1)\subset G(0)\subset G(1)\subset \cdots \subset G(t)=G$$
such that
 $G(i)/G(i-1) = \mathscr{A}\otimes W_i$ is
a DG free $\mathscr{A}$-module on a cocycle basis, for any $i\in \{0,1,\cdots, t\}$. It suffices to show  each
$\dim_kW_i<\infty$. Let $\{e_{i,j}|j\in I_i\}$ be a basis of
$W_i, i=0,1, \cdots, t$. Let $\iota_0: G(0)\to G$ be the
inclusion morphism. Since $\mathrm{im}H(\iota_0)$ is a graded
$H(\mathscr{A})$-submodule of $H(G)$ and $H(\mathscr{A})$ is a Noetherian graded algebra, we can conclude that
$\mathrm{im}H(\iota_0)\cong
\frac{H(G(0))}{\mathrm{ker}H(\iota_0)}$ is a finitely
generated $H(\mathscr{A})$-module.  Let $$\mathrm{im}H(\iota_0) =
H(\mathscr{A})f_{0,1}+H(\mathscr{A})f_{0,2}+\cdots+H(\mathscr{A})f_{0,n}.$$ Since $H(G(0))\cong
\bigoplus\limits_{j\in I_0}H(\mathscr{A})e_{0,j}$ is a free graded
$H(\mathscr{A})$-module, there is a finite subset $J_0=\{i_1,i_2,\cdots,i_l\}$
of $I_0$ such that
$$f_{0,s}=\sum\limits_{r=1}^la_{s,r}\overline{e_{0,i_r}}, s=1,2,
\cdots, n,$$ where each $a_{s,r}\in H(\mathscr{A})$. If $V(0)$ is infinite
dimensional, then both $I_0$ and $I_0\setminus J_0$ are infinite
sets. Hence for any $j\in I_0\setminus J_0$, we have $e_{0,j}\in
\mathrm{ker}H(\iota_0)$. Since $[\iota_{0}(e_{0,j})]=[e_{0,j}]=0$ in
$H(G)$, there exists $x_{0,j}\in G$ such that
$\partial_{G}(x_{0,j})=e_{0,j}$. This contradicts with the
minimality of $G$. Thus $W_0$ is finite dimensional and
$G(0)\in \mathscr{D}^f(\mathscr{A})$.

Assume inductively that $\dim_k W_j<\infty$ has been
proved $j=0,1,\cdots, i-1$. Then each $G(j)/G(j-1)$ is an object in $\mathscr{D}_{fg}(\mathscr{A})$, $j=0,1,\cdots,
i-1$.
We can prove inductively that
each $G(j)$ is in $\mathscr{D}_{fg}(\mathscr{A})$ by the
following sequence of short exact sequences
\begin{align*}
0\longrightarrow G(j-1)\longrightarrow G(j)\longrightarrow
G(j)/G(j-1)\longrightarrow 0,\,\, j= 1,\cdots, i-1.\\
\end{align*}
Similarly, $G/G(i-1)$ is also an object in $\mathscr{D}_{fg}(\mathscr{A})$ by
the short exact sequence $$0\longrightarrow G(i-1) \longrightarrow
G \longrightarrow G/G(i-1)\longrightarrow 0.$$
 On the other
hand, it is easy to see that $G/G(i-1)$ is also a minimal
semi-free DG $\mathscr{A}$-module and it has a semi-free filtration
$$G(i)/G(i-1)\subseteq G(i+1)/G(i-1)\subseteq \cdots \subseteq
G(t)/G(i-1)=G/G(i-1).$$ Let $\iota_i: G(i)/G(i-1)\to
G/G(i-1)$ be the inclusion morphism. Since
$\mathrm{im}H(\iota_i)$ is a graded $H(\mathscr{A})$-submodule of
$H(G/G(i-1))$ and $H(\mathscr{A})$ is Noetherian, one sees that
$\mathrm{im}H(\iota_i)\cong
\frac{H(G(i)/G(i-1))}{\mathrm{ker}H(\iota_i)}$ is a
finitely generated $H(\mathscr{A})$-module.  Let $$\mathrm{im}H(\iota_i) =
H(\mathscr{A})f_{i,1}+H(\mathscr{A})f_{i,2}+\cdots+H(\mathscr{A})f_{i,m}.$$ Since
$$H(G(i)/G(i-1))\cong \bigoplus\limits_{j\in I_i}H(\mathscr{A})e_{i,j}$$ is a
free graded $H(\mathscr{A})$-module, there is a finite subset
$J_i=\{s_1,s_2,\cdots,s_q\}$ of $I_i$ such that
$$f_{i,l}=\sum\limits_{j=1}^qa_{l,j}\overline{e_{i,s_j}}, l=1,2,
\cdots, m,$$ where each $a_{l,j}\in H(\mathscr{A})$. If $W_i$ is an infinite
dimensional space, then both $I_i$ and $I_i\setminus J_i$ are
infinite sets. Hence for any $j\in I_i\setminus J_i$, we have
$e_{i,j}\in \mathrm{ker}H(\iota_i)$. Since
$[\iota_{i}(e_{i,j})]=[e_{i,j}]=0$ in $H(G/G(i-1))$, there exist
$x_{i,j}\in G/G(i-1)$ such that
$\partial_{G}(x_{i,j})=e_{i,j}$. This contradict with the
minimality of $G$. Thus $W_i$ is finite dimensional.

By the induction above, we get $\dim_k W_i<\infty$ for any $i\in \{0,1,\cdots,t\}$. Hence $G$ has a finite semi-basis and $G$
is compact.
\end{proof}

The following theorem  completely characterize homologically smooth DG algebra intrinsically.

\begin{thm}\label{mainres}
Let $\mathscr{A}$ be a connected cochain DG algebra such that $H(\mathscr{A})$ is a Noetherian graded algebra. Then the following statements are equivalent:

$(a)\,\, \mathscr{A}$ is homologically smooth.

$(b)\,\, \mathrm{cl}_{\mathscr{A}^e}\mathscr{A}<\infty$.

$(c)\,\, l.\mathrm{Gl.dim}\,\mathscr{A}<\infty$.

$(d)\,\, \mathscr{D}^c(\mathscr{A})=\mathscr{D}_{fg}(\mathscr{A}) $.

$(e)\,\, \mathscr{D}_{sg}(\mathscr{A})=0$.

$(f)\,\, \mathrm{cl}_{\mathscr{A}}k<\infty$.

$(g)\,\, k\in \mathscr{D}^c(\mathscr{A})$.

\end{thm}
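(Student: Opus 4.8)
The plan is to prove the equivalences by closing the cycle
$$(a)\Rightarrow(b)\Rightarrow(c)\Rightarrow(d)\Rightarrow(g)\Rightarrow(a),$$
then attaching $(e)$ through $(d)\Leftrightarrow(e)$ and folding $(f)$ in via the side chain $(c)\Rightarrow(f)\Rightarrow(g)$. Since the analytic content has already been isolated in the preparatory results, the argument is essentially one of routing those results with the correct hypotheses.

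For $(a)\Rightarrow(b)$, I would use that homological smoothness means precisely that $\mathscr{A}$ is compact as a DG $\mathscr{A}^e$-module. As $\mathscr{A}^e$ is again a connected cochain DG algebra and $\mathscr{A}$ is cohomologically bounded below over it, Remark \ref{charcomp} applied to $\mathscr{A}^e$ furnishes a minimal semi-free resolution of $\mathscr{A}$ over $\mathscr{A}^e$ with a finite semi-basis; its standard semi-free filtration has finite length, so $\mathrm{cl}_{\mathscr{A}^e}\mathscr{A}<\infty$. The step $(b)\Rightarrow(c)$ is immediate from Proposition \ref{finitecl}, which bounds $\mathrm{cl}_{\mathscr{A}}M$ uniformly in $M$ once $\mathrm{cl}_{\mathscr{A}^e}\mathscr{A}$ is finite, giving $l.\mathrm{Gl.dim}\,\mathscr{A}<\infty$. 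For $(c)\Rightarrow(d)$ I would take any $M\in\mathscr{D}_{fg}(\mathscr{A})\subseteq\mathscr{D}^+(\mathscr{A})$: finiteness of the global dimension yields $\mathrm{cl}_{\mathscr{A}}M<\infty$, so Theorem \ref{mindggf} produces a minimal semi-free resolution $G\simeq M$ of finite DG free class, and because $G$ then has finitely generated cohomology, Proposition \ref{fgtocom} (this is where the Noetherian hypothesis on $H(\mathscr{A})$ is used) forces $G\in\mathscr{D}^c(\mathscr{A})$. Hence $\mathscr{D}_{fg}(\mathscr{A})\subseteq\mathscr{D}^c(\mathscr{A})$, and the reverse inclusion is automatic. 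Finally $(d)\Rightarrow(g)$ holds since $H(k)=k$ is finitely generated over $H(\mathscr{A})$, so $k\in\mathscr{D}_{fg}(\mathscr{A})=\mathscr{D}^c(\mathscr{A})$, and $(g)\Rightarrow(a)$ is exactly Lemma \ref{homsmooth}.

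It remains to attach $(e)$ and $(f)$. For $(d)\Leftrightarrow(e)$ I would observe that the Verdier quotient $\mathscr{D}_{sg}(\mathscr{A})=\mathscr{D}_{fg}(\mathscr{A})/\mathscr{D}^c(\mathscr{A})$ vanishes if and only if every object of $\mathscr{D}_{fg}(\mathscr{A})$ already lies in the thick subcategory $\mathscr{D}^c(\mathscr{A})$, which is precisely $(d)$. The implication $(c)\Rightarrow(f)$ is the trivial specialization $M=k$ in the definition of $l.\mathrm{Gl.dim}\,\mathscr{A}$, and $(f)\Rightarrow(g)$ repeats the mechanism of $(c)\Rightarrow(d)$ for the single module $k$: from $\mathrm{cl}_{\mathscr{A}}k<\infty$, Theorem \ref{mindggf} gives a minimal semi-free resolution of $k$ of finite DG free class lying in $\mathscr{D}_{fg}(\mathscr{A})$, whence Proposition \ref{fgtocom} yields $k\in\mathscr{D}^c(\mathscr{A})$.

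Because everything substantive is imported, the delicate points are not computations but hypothesis checks that let the cited results fire, and this is where I expect the most care to be needed: verifying that $\mathscr{A}^e$ satisfies the standing assumptions so that Remark \ref{charcomp} may be invoked over the enveloping algebra, and confirming in $(c)\Rightarrow(d)$ and $(f)\Rightarrow(g)$ that the minimal resolution supplied by Theorem \ref{mindggf} genuinely lands in $\mathscr{D}_{fg}(\mathscr{A})$, which is exactly the input Proposition \ref{fgtocom} demands. The conceptual heart of the theorem is the smoothness-versus-triviality dictionary of Lemma \ref{homsmooth} together with the finiteness transfer in Propositions \ref{finitecl} and \ref{fgtocom}; once those are available, the seven equivalences are a matter of threading them together.
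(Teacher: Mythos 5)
Your proposal is correct and follows essentially the same route as the paper: the same key inputs (Lemma \ref{homsmooth}, Proposition \ref{finitecl}, Theorem \ref{mindggf}, Proposition \ref{fgtocom}) are threaded together in the same way, the only cosmetic difference being that the paper derives $(d)$ from $(b)$ rather than from $(c)$ and routes $(d)\Rightarrow(f)\Rightarrow(g)$ where you use the shorter $(d)\Rightarrow(g)$. Your flagged hypothesis checks (that $\mathscr{A}^e$ is again connected cochain, and that the minimal resolution from Theorem \ref{mindggf} lies in $\mathscr{D}_{fg}(\mathscr{A})$) are exactly the points the paper relies on implicitly, and both hold.
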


\begin{proof}
(a)$\Rightarrow$(b) Since $\mathscr{A}$ is a homologically smooth DG algebra, the DG $\mathscr{A}^e$-module $\mathscr{A}$ is compact. So it admits a minimal semi-free resolution $X$ with a finite semi-basis. This implies that $\mathrm{DGfree.class}_{\mathscr{A}^e}X<\infty$.
By the definition of cone length, $$\mathrm{cl}_{\mathscr{A}^e}\le \mathrm{DGfree.class}_{\mathscr{A}^e}X<\infty.$$

(b)$\Rightarrow$(c) For any DG $\mathscr{A}$-module $M$, we have $\mathrm{cl}_{\mathscr{A}}M\le \mathrm{cl}_{\mathscr{A}^e}\mathscr{A}$
by Proposition \ref{finitecl}.
Therefore, $l.\mathrm{Gl.dim}\,\mathscr{A}=\sup\{\mathrm{cl}_{\mathscr{A}}M| M\in \mathscr{D}(\mathscr{A})\}\le \mathrm{cl}_{\mathscr{A}^e}\mathscr{A}<\infty$.

(b)$\Rightarrow$(d) It suffices to show that any DG $\mathscr{A}$-module $M$ in $\mathscr{D}_{fg}(\mathscr{A})$ is compact.
 By Proposition \ref{finitecl}, we have $\mathrm{cl}_{\mathscr{A}}M\le \mathrm{cl}_{\mathscr{A}^e}\mathscr{A} <\infty$. By Proposition \ref{mindggf}, $M$ admits a minimal semi-free resolution $G$ such that $\mathrm{DGfree. class}_{\mathscr{A}}G=\mathrm{cl}_{\mathscr{A}}M$.
 Then $G$ is an object in $\mathscr{D}^c(\mathscr{A})$ by Proposition \ref{fgtocom}.  Since $G$ is a semi-free resolution of $M$, we conclude that $M\in \mathscr{D}^c(\mathscr{A})$.

 (d)$\Leftrightarrow$(e) Since $\mathscr{D}_{sg}(\mathscr{A})=\mathscr{D}_{fg}(\mathscr{A})/\mathscr{D}^c(\mathscr{A})$, $\mathscr{D}_{fg}(\mathscr{A})=\mathscr{D}^c(\mathscr{A}) \Leftrightarrow \mathscr{D}_{sg}(\mathscr{A})=0$.

 (c)$\Rightarrow$ (f) We have $\mathrm{cl}_{\mathscr{A}}k\le \sup\{\mathrm{cl}_{\mathscr{A}}M| M\in \mathscr{D}(\mathscr{A})\}=l.\mathrm{Gl.dim}\,\mathscr{A}<\infty$.

 (d)$\Rightarrow$(f) Since $k\in \mathscr{D}_{fg}(\mathscr{A})=\mathscr{D}^c(\mathscr{A})$, $k$ admits a minimal semi-free resolution $F_k$ which has a finite semi-basis. We have $\mathrm{DGfree.class}_{\mathscr{A}}F_k<\infty$. Therefore, $$\mathrm{cl}_{\mathscr{A}}k\le \mathrm{DGfree.class}_{\mathscr{A}}F_k<\infty.$$

 (f)$\Rightarrow$(g) Let $\mathrm{cl}_{\mathscr{A}}k=t$.  By Proposition \ref{mindggf}, $k$ has a minimal semi-free resolution $F_k$ such that $\mathrm{DGfree.class}_{\mathscr{A}}F_k=t$. Applying Proposition \ref{fgtocom} to $F_k$, we conclude $F_k\in \mathscr{D}^c(\mathscr{A})$.  Then $k\in \mathscr{D}^c(\mathscr{A})$ since $F_k$ is a semi-free resolution of ${}_{\mathscr{A}}k$.

 (g)$\Rightarrow$(a) By Lemma\ref{homsmooth}, $\mathscr{A}$ is homologically smooth since  $k\in \mathscr{D}^c(\mathscr{A})$.
\end{proof}

By \cite[Proposition 4.6]{MW2}, we have quasi-inverse
contravariant equivalences of categories,
\begin{align*}
\xymatrix{&\mathscr{D}^{c}(\mathscr{A})\quad\quad\ar@<1ex>[r]^{R\Hom_{\mathscr{A}}(-,
\mathscr{A})}&\quad\quad
\mathscr{D}^{c}(\mathscr{A}\!^{op})\ar@<1ex>[l]^{R\Hom_{\mathscr{A}\!^{op}}(-,\mathscr{A})}}.
\end{align*}
By Theorem \ref{mainres} and Lemma \ref{homsmooth}, $\mathscr{D}^{c}(\mathscr{A})=\mathscr{D}_{fg}(\mathscr{A})$ and $\mathscr{D}^{c}(\mathscr{A}^{op})=\mathscr{D}_{fg}(\mathscr{A}^{op})$ when $\mathscr{A}$ is homologically smooth and $H(\mathscr{A})$ is Noetherian. The following corollary is obviously true.

\begin{cor}\label{dual}
Let $\mathscr{A}$ be a homologically smooth connected cochain DG algebra such that $H(\mathscr{A})$ is a Noetherian graded algebra.
There is a duality between $\mathscr{D}_{fg}(\mathscr{A})$ and
$\mathscr{D}_{fg}(\mathscr{A}\!^{op})$. To be precise, we have quasi-inverse
contravariant equivalences of categories,
\begin{align*}
\xymatrix{&\mathscr{D}_{fg}(\mathscr{A})\quad\quad\ar@<1ex>[r]^{R\Hom_{\mathscr{A}}(-,
\mathscr{A})}&\quad\quad
\mathscr{D}_{fg}(\mathscr{A}\!^{op})\ar@<1ex>[l]^{R\Hom_{\mathscr{A}\!^{op}}(-,\mathscr{A})}}.
\end{align*}
\end{cor}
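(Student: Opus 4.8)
The plan is to deduce this from the duality already available on compact objects, by upgrading the subscript $c$ to the subscript $fg$ on both sides. By \cite[Proposition 4.6]{MW2}, the functors $R\Hom_{\mathscr{A}}(-,\mathscr{A})$ and $R\Hom_{\mathscr{A}^{op}}(-,\mathscr{A})$ restrict to quasi-inverse contravariant equivalences between $\mathscr{D}^c(\mathscr{A})$ and $\mathscr{D}^c(\mathscr{A}^{op})$. Hence, once I establish the two identities $\mathscr{D}^c(\mathscr{A})=\mathscr{D}_{fg}(\mathscr{A})$ and $\mathscr{D}^c(\mathscr{A}^{op})=\mathscr{D}_{fg}(\mathscr{A}^{op})$, the very same pair of functors will furnish the asserted duality between $\mathscr{D}_{fg}(\mathscr{A})$ and $\mathscr{D}_{fg}(\mathscr{A}^{op})$, with no further naturality or coherence checks required, since the objects and morphisms are literally unchanged.

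The first identity $\mathscr{D}^c(\mathscr{A})=\mathscr{D}_{fg}(\mathscr{A})$ follows at once from the equivalence $(a)\Leftrightarrow(d)$ of Theorem \ref{mainres} applied to $\mathscr{A}$ itself, using the standing hypotheses that $\mathscr{A}$ is homologically smooth and $H(\mathscr{A})$ is Noetherian. To obtain the second identity I would apply the same theorem to $\mathscr{A}^{op}$, so the key step is to verify that $\mathscr{A}^{op}$ also satisfies both hypotheses. For the Noetherian condition, note $H(\mathscr{A}^{op})\cong H(\mathscr{A})^{op}$, and a graded algebra is Noetherian precisely when its opposite is. For homological smoothness, I would invoke Lemma \ref{homsmooth}: $\mathscr{A}$ is homologically smooth if and only if $k_{\mathscr{A}}$ is compact; but $k_{\mathscr{A}}$, viewed as a right DG $\mathscr{A}$-module, is exactly ${}_{\mathscr{A}^{op}}k$ as a left DG $\mathscr{A}^{op}$-module, whose compactness is in turn equivalent to homological smoothness of $\mathscr{A}^{op}$ by Lemma \ref{homsmooth} again. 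Equivalently, one observes directly that $(\mathscr{A}^{op})^e=\mathscr{A}^{op}\otimes\mathscr{A}\cong\mathscr{A}\otimes\mathscr{A}^{op}=\mathscr{A}^e$ via the signed swap, so that $\mathscr{A}\in\mathscr{D}^c(\mathscr{A}^e)$ forces $\mathscr{A}^{op}\in\mathscr{D}^c((\mathscr{A}^{op})^e)$.

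With both category equalities in hand, I substitute $\mathscr{D}_{fg}$ for $\mathscr{D}^c$ in the diagram of \cite[Proposition 4.6]{MW2} and read off the conclusion. The main, and essentially only, obstacle is this symmetry argument transferring the hypotheses to $\mathscr{A}^{op}$; everything else is bookkeeping, which is why the statement is recorded as a corollary. I would take care to make explicit the identification of right DG $\mathscr{A}$-modules with left DG $\mathscr{A}^{op}$-modules together with the isomorphism $(\mathscr{A}^{op})^e\cong\mathscr{A}^e$, since these are exactly what allow Theorem \ref{mainres} to be applied \emph{verbatim} to the opposite algebra.
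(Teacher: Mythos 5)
Your proposal is correct and follows essentially the same route as the paper: the paper likewise quotes the duality on compact objects from \cite[Proposition 4.6]{MW2} and then identifies $\mathscr{D}^c(\mathscr{A})=\mathscr{D}_{fg}(\mathscr{A})$ and $\mathscr{D}^c(\mathscr{A}^{op})=\mathscr{D}_{fg}(\mathscr{A}^{op})$ via Theorem \ref{mainres} together with Lemma \ref{homsmooth}. Your explicit verification that $\mathscr{A}^{op}$ inherits both hypotheses is exactly the role Lemma \ref{homsmooth} plays in the paper's (terser) argument.
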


\section{Ext and Castelnuovo-Mumford regularities of DG modules}
In this section, we study the Ext and Castelnuovo-Mumford regularities of DG modules. These two invariants of DG modules were introduced and studied in \cite{Jor2}.
\begin{defn}\label{exreg}{\rm
 For any $M \in \mathscr{D}(\mathscr{A})$, we define the Ext-regularity of $M$ by
$$\mathrm{Ext.reg}\, M = -\inf\{i|H^i(R\Hom_{\mathscr{A}}(M,k))\neq 0\},$$ and similarly for $N \in \mathrm{D}(\mathscr{A}^{op})$.  Note that $\mathrm{Ext.reg}(0)=-\infty$.  }
\end{defn}

\begin{rem}
For any  DG $\mathscr{A}$-module $M$ in $\mathscr{D}_{fg}(\mathscr{A})$, it admits a minimal semi-free resolution $F_M$ by Lemma \ref{exist}. Let $E$ be a semi-basis of $F_M$. Then by the minimality of $F_M$, we have $$\mathrm{Ext.reg}\, M=\sup\{|e|\, |e\in E\}.$$ If $\mathscr{A}$ is homologically smooth and $H(\mathscr{A})$ is Noetherian, then $\mathscr{D}_{fg}(\mathscr{A})=\mathscr{D}^c(\mathscr{A})$ by Theorem \ref{mainres} and hence any object in $\mathscr{D}_{fg}(\mathscr{A})$ has finite Ext-regularity.
\end{rem}

\begin{defn}\cite{MW1}
{\rm For any object $M \in \mathscr{D}(\mathscr{A})$, the depth  and $k$-injective dimension of $M$ are defined, respectively, as
$$
\mathrm{depth}_{\mathscr{A}}M=\inf\{j|H^j(R\Hom_{\mathscr{A}}(k,M))\neq 0\}$$
and $$ k.\mathrm{id}_{\mathscr{A}}M =\sup\{j|H^j(R\Hom_{\mathscr{A}}(k,M))\neq 0\}.$$
}
\end{defn}

In the rest of this section,  we assume that $\mathscr{A}$ is a homologically smooth connected cochain DG algebra.
Then both ${}_{\mathscr{A}}k$ and $k_{\mathscr{A}}$ are compact by Lemma \ref{homsmooth}. In this case, we have \cite[Setup 4.1]{Jor2}.
 Let $K$ and $L$ be the minimal semi-free resolutions of ${}_{\mathscr{A}}k$ and $k_{\mathscr{A}}$, respectively.  We have $\langle K\rangle = \langle {}_{\mathscr{A}}k \rangle$ and $\langle L\rangle =\langle k_{\mathscr{A}}\rangle$  in $\mathscr{D}(\mathscr{A})$.
Set $$\mathcal{N}=\langle {}_{\mathscr{A}}k\rangle^{\bot} =\langle {}_{\mathscr{A}}K\rangle^{\bot}, \mathscr{D}^{\mathrm{tors}}(\mathscr{A})={}^{\bot}\mathcal{N}\,\, \text{and}\,\, \mathscr{D}^{\mathrm{comp}}(\mathscr{A})=\mathcal{N}^{\bot}$$ in $\mathscr{D}(\mathscr{A})$. The DG modules in $\mathscr{D}^{\mathrm{tors}}(\mathscr{A})$ and $\mathscr{D}^{\mathrm{comp}}(\mathscr{A})$ are called torsion DG modules and complete DG modules, respectively.
Then $\mathscr{D}^{\mathrm{tors}}(\mathscr{A})=\langle{}_{\mathscr{A}}k\rangle=\langle{}_{\mathscr{A}}K\rangle$.
Let $\mathcal{E}=\Hom_{\mathscr{A}}(K,K)$ be the endomorphism DG algebra.
We have the following lemma on $\mathcal{E}$.
 \begin{lem}\label{local}The DG algebra $\mathcal{E}$ satisfies the following conditions.
 \begin{enumerate}
\item $\dim_kH(\mathcal{E})<\infty$;
\item  $0=\sup\{i\in \Bbb{Z}|H^i(\mathcal{E})\neq 0\}$;
\item  $H^0(\mathcal{E})$ is a local finite dimensional algebra.
 \end{enumerate}
 \end{lem}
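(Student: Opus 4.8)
The strategy is to pass from the DG endomorphism algebra $\mathcal{E}=\Hom_{\mathscr{A}}(K,K)$ to the far more transparent complex $\Hom_{\mathscr{A}}(K,k)$, on which minimality kills the differential, while simultaneously identifying $H^0(\mathcal{E})$ with $\mathrm{End}_{\mathscr{D}(\mathscr{A})}(k)$. Since $\mathscr{A}$ is homologically smooth, ${}_{\mathscr{A}}k$ is compact by Lemma \ref{homsmooth}, so its minimal semi-free resolution $K$ has a finite semi-basis concentrated in degrees $\ge 0$ by Lemma \ref{exist} (here $b=\inf\{j\,|\,H^j(k)\neq 0\}=0$). As $K$ is semi-free, hence K-projective, the augmentation $\varepsilon\colon K\xrightarrow{\simeq}k$ induces, by Lemma \ref{homotopyproj}, a quasi-isomorphism $\Hom_{\mathscr{A}}(K,\varepsilon)\colon \mathcal{E}\xrightarrow{\simeq}\Hom_{\mathscr{A}}(K,k)$. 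Thus $H(\mathcal{E})\cong H(\Hom_{\mathscr{A}}(K,k))\cong H(R\Hom_{\mathscr{A}}(k,k))$, and the middle complex has vanishing differential because $K$ is minimal.

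For statement $(1)$, smoothness gives the compactness of ${}_{\mathscr{A}}k$, so $\dim_k H(R\Hom_{\mathscr{A}}(k,k))<\infty$ by Lemma \ref{comp}; the isomorphism above then yields $\dim_k H(\mathcal{E})<\infty$. For statement $(2)$ I would read off degrees: since every semi-basis element of $K$ has degree $\ge 0$, the complex $\Hom_{\mathscr{A}}(K,k)\cong \Hom_k(K/\frak mK,k)$ is concentrated in degrees $\le 0$, whence $H^i(\mathcal{E})=0$ for $i>0$. On the other hand $H^0(\mathcal{E})=\Hom_{\mathscr{D}(\mathscr{A})}(K,K)=\mathrm{End}_{\mathscr{D}(\mathscr{A})}(k)$ contains the class of $\mathrm{id}_K$, which is nonzero; hence $\sup\{i\,|\,H^i(\mathcal{E})\neq 0\}=0$.

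Statement $(3)$ is the real point, and here one must resist claiming $H^0(\mathcal{E})\cong k$: because the cochain grading merges the homological and internal degrees, $H^0(\mathcal{E})=\mathrm{End}_{\mathscr{D}(\mathscr{A})}(k)$ can be strictly larger than $k$ (already for $\mathscr{A}=k[x]$ with $|x|=1$ one finds $H^0(\mathcal{E})\cong k[\theta]/(\theta^2)$). Finite-dimensionality of $H^0(\mathcal{E})$ is immediate from $(1)$. For locality I would argue through indecomposability: $H(k)=k$ is one-dimensional, so any splitting $k\cong M_1\oplus M_2$ in $\mathscr{D}(\mathscr{A})$ forces $H(M_1)=0$ or $H(M_2)=0$, i.e. one summand is acyclic and hence zero in $\mathscr{D}(\mathscr{A})$; thus $k$ is indecomposable. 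Since $\mathscr{D}(\mathscr{A})$ has arbitrary coproducts it is idempotent complete, so every idempotent of $\mathrm{End}_{\mathscr{D}(\mathscr{A})}(k)$ splits off a direct summand of $k$ and must therefore be $0$ or $\mathrm{id}$. A finite-dimensional $k$-algebra with no nontrivial idempotents is local, which gives $(3)$.

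The only genuinely delicate step is $(3)$: everything up to it is a formal consequence of K-projectivity, minimality, and compactness, but the locality of $H^0(\mathcal{E})$ cannot be seen by a naive degree count and instead requires the structural input that $k$ is an indecomposable object with finite-dimensional endomorphism ring. If one prefers to avoid invoking idempotent-completeness of $\mathscr{D}(\mathscr{A})$, an alternative is to consider the augmentation $\chi\colon H^0(\mathcal{E})\to \mathrm{End}_k(H^0(k))=k$ and to show that $\ker\chi$ is nilpotent by descending induction along the finite semi-free filtration of $K$; I expect the indecomposability argument to be the shorter and cleaner of the two.
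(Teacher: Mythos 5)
Your proof is correct, and for parts (1) and (2) it is essentially the paper's argument: both reduce $H(\mathcal{E})$ to $\Hom_{\mathscr{A}}(K,k)$ via minimality, get finite dimensionality from the finite semi-basis of $K$ (compactness of ${}_{\mathscr{A}}k$), and read off the vanishing of $H^i(\mathcal{E})$ for $i>0$ from the fact that the generators of $K$ sit in degrees $\ge 0$. For part (3) you take a genuinely different route: the paper simply quotes \cite[Lemma 10.2]{MW2} for the locality of $\Hom_{\mathscr{D}(\mathscr{A})}(k,k)$, whereas you give a self-contained argument --- $k$ is indecomposable in $\mathscr{D}(\mathscr{A})$ since $\dim_k H(k)=1$, idempotents split because $\mathscr{D}(\mathscr{A})$ has coproducts, and a finite-dimensional algebra without nontrivial idempotents is local. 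This is a clean and complete substitute for the citation, and arguably more illuminating, since it isolates exactly which structural facts are used. One caveat: your illustrative aside that $H^0(\mathcal{E})\cong k[\theta]/(\theta^2)$ for $\mathscr{A}=k[x]$, $|x|=1$, is not right in the single-graded cochain convention of this paper. There the minimal resolution of $k$ has generators $1$ and $e$ with $|e|=1$, so the dual class $\theta=e^{*}$ has total degree $-1$ and lands in $H^{-1}(\mathcal{E})$, not $H^0(\mathcal{E})$; in fact $H^0(\mathcal{E})\cong k^{(\Lambda^0)}$ where $\Lambda^0$ indexes the degree-$0$ generators of $K$. This slip is confined to the motivation and does not affect the validity of your argument, which never assumes anything about the size of $H^0(\mathcal{E})$ beyond finite dimensionality.
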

 \begin{proof}
(1) Since $\mathscr{A}$ is homologically smooth, the minimal semi-free resolution $K$ of ${}_{\mathscr{A}}k$ has a finite semi-basis $E$.
 By the minimality of $K$, one sees that \begin{align*}
 \dim_kH(\mathcal{E})&=\dim_kH(\Hom_{\mathscr{A}}(K,K))\\
 &=\dim_k\Hom_{\mathscr{A}}(K,k)\\
 &=\dim_k\oplus_{e\in E}ke=|E|<\infty.
 \end{align*}
 (2)By Lemma \ref{exist}, ${}_{\mathscr{A}}k$ has a minimal semi-free resolution $K$ such that
$$K^{\#}= \coprod\limits_{i\ge
0}\Sigma^{-i}(\mathscr{A}^{\#})^{(\Lambda^i)},$$ where each $\Lambda^i$ is an
index set. Thus \begin{align*}
\sup\{j\in \Bbb{Z}|H^j(\mathcal{E})\neq 0\}&=\sup\{i\in\Bbb{Z}|[\Hom_{\mathscr{A}}(K,k)]^i\neq 0\}\\
&=\sup\{j\in\Bbb{Z}| [\Hom_{\mathscr{A}}(\coprod\limits_{i\ge
0}\Sigma^{-i}(\mathscr{A}^{\#})^{(\Lambda^i)},k)]^j\neq 0\}\\
&=\sup\{j\in\Bbb{Z}| [\prod\limits_{i\ge
0}\Sigma^{i}(k)^{(\Lambda^i)}]^j\neq 0\}=0.
\end{align*}
(3) By \cite[Lemma 10.2]{MW2}, the algebra $\Hom_{\mathscr{D}(\mathscr{A})}(k,k)$ is local. Thus
the algebra $$H^0(\mathcal{E})=H^0(R\Hom_{\mathscr{A}}(k,k))=\Hom_{\mathscr{D}(\mathscr{A})}(k,k)$$ is a finite dimensional local algebra since $\dim_kH(\mathcal{E})<\infty$.
 \end{proof}

By Lemma \ref{local}, $H^0(\mathcal{E})$ is a local algebra. Let $J$ be its maximal ideal.
Set $b=\inf\{i|H^i(\mathcal{E})\neq 0$, $Z^i=\mathrm{ker}(d_{\mathcal{E}}^i)$, $C^i=\mathcal{E}^i/Z^i$, $H^i=H^i(\mathcal{E})$ and $B^i=\mathrm{im}(d_{\mathcal{E}}^{i-1})$. Then $\mathcal{E}$ admits two DG subalgebras
$$\mathcal{E}':\quad\quad \cdots \stackrel{d_{\mathcal{E}}^{i-1}}{\to} \mathcal{E}^i\stackrel{d_{\mathcal{E}}^{i}}{\to} \mathcal{E}^{i+1}
\stackrel{d_{\mathcal{E}}^{i+1}}{\to}\cdots \stackrel{d_{\mathcal{E}}^{-2}}{\to}\mathcal{E}^{-1}\stackrel{d_{\mathcal{E}}^{-1}}{\to} Z^0\to 0$$
and
$$\mathcal{E}'':\quad\quad  \cdots \stackrel{d_{\mathcal{E}}^{j-1}}{\to} \mathcal{E}^j\stackrel{d_{\mathcal{E}}^{j}}{\to} \mathcal{E}^{j+1}
\stackrel{d_{\mathcal{E}}^{j+1}}{\to}\cdots  \stackrel{d_{\mathcal{E}}^{b-1}}{\to} B^b\to 0.$$ Clearly, $\mathcal{E}''$ is a DG ideal of $\mathcal{E}'$. Note that the DG algebra $\mathcal{E}'/\mathcal{E}''$ is
$$0\to C^b\oplus H^b \stackrel{d_{\mathcal{E}}^{b}}{\to} \mathcal{E}^{b+1}\stackrel{d_{\mathcal{E}}^{b+1}}{\to} \cdots \stackrel{d_{\mathcal{E}}^{-2}}{\to} \mathcal{E}^{-1} \stackrel{d_{\mathcal{E}}^{-1}}{\to} Z^0\to 0.$$
One sees that both the inclusion morphism $\iota: \mathcal{E}'\to \mathcal{E}$ and the canonical surjection $\varepsilon: \mathcal{E}'\to \mathcal{E}'/\mathcal{E}''$ are quasi-isomorphisms.
  Let $R_t=(\mathcal{E}')^{-t}$ and $d_t^R=d_{\mathcal{E}}^{-t}$ for any $t\ge 0$.  In this way, $\mathcal{E}'$ can be considered as a chain DG algebra $R$:
$$\cdots\stackrel{d^R_{i+1}}{\to} R_{i}\stackrel{d_{i}^R}{\to} R_{i-1}\stackrel{d_{i-1}^R}{\to}\cdots \stackrel{d_{2}^R}{\to} R_1\stackrel{d_{1}^R}{\to} R_0\to 0.$$
Moreover,
$H_0(R)=R_0/\mathrm{im}(d_1^R)\cong H^0$ is a finite dimensional local algebra and $\dim_kH(R)=\dim_kH(\mathcal{E}')=\dim_k H(\mathcal{E})<\infty$. Each $H_i(R)$ is a finitely generated $H_0(R)$-module and $-b=\sup\{i\in \Bbb{Z}|H_i(R)\neq 0\}$.  So $R$ is  a local chain DG algebra introduced in \cite{FJ}. Its maximal DG ideal is
$$\mathfrak{m}_R: \quad \cdots \stackrel{d^R_{i+1}}{\to} R_i\stackrel{d_i^R}{\to}\cdots \stackrel{d_{2}^R}{\to}R_1 \stackrel{d_1^R}{\to} R_0=B^0\oplus J\to 0.$$
\begin{rem}
The DG algebra $\mathcal{E}'$ and $\mathcal{E}$ are both augmented DG algebras with augmented DG ideals
\begin{align*}
\mathfrak{m}_{\mathcal{E}'}: \quad\quad \cdots \stackrel{d_{\mathcal{E}}^{i-1}}{\to} \mathcal{E}^i\stackrel{d_{\mathcal{E}}^{i}}{\to}\cdots \stackrel{d_{\mathcal{E}}^{-2}}{\to}\mathcal{E}^{-1} \stackrel{d_{\mathcal{E}}^{-1}}{\to} B^0\oplus J\to 0
\end{align*}
and
\begin{align*}
\mathfrak{m}_{\mathcal{E}}: \quad\quad \cdots \stackrel{d_{\mathcal{E}}^{i-1}}{\to} \mathcal{E}^i\stackrel{d_{\mathcal{E}}^{i}}{\to}\cdots \stackrel{d_{\mathcal{E}}^{-2}}{\to}\mathcal{E}^{-1} \stackrel{d_{\mathcal{E}}^{-1}}{\to} B^0\oplus J\oplus C^0 \stackrel{d_{\mathcal{E}}^{0}}{\to} \mathcal{E}^1 \stackrel{d_{\mathcal{E}}^{1}}{\to} \cdots \stackrel{d_{\mathcal{E}}^{j}}{\to} \mathcal{E}^j\stackrel{d_{\mathcal{E}}^{j+1}}{\to}\cdots.
\end{align*}
\end{rem}

 \begin{prop}\label{minres}
 Let $X$ be a left DG $\mathcal{E}'$-module such that each $H^i(X)$ is a finitely generated $H^0(\mathcal{E}')$-module and $u=\sup\{i|H^i(X)\neq 0\}<\infty$. Then $X$ admits a minimal semi-free resolution $F$ with $F^{\#}_X=\coprod\limits_{j\le u}\Sigma^j(\mathcal{E}'^{\#})^{(\beta_j)}$, where each $\beta_j$ is finite.
 \end{prop}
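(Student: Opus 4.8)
The plan is to imitate the inductive construction of a minimal semi-free resolution used for connected cochain DG algebras (Lemma \ref{exist}), while paying attention to the two features that make the present situation genuinely different. First, $\mathcal{E}'$ is not connected but only \emph{local}: its degree-zero cohomology $H^0(\mathcal{E}')$ is the finite-dimensional local algebra of Lemma \ref{local}(3) rather than $k$, although $\mathcal{E}'/\mathfrak{m}_{\mathcal{E}'}\cong k$. Second, beyond mere existence one must produce a resolution with \emph{finitely many} semi-basis elements in each degree, i.e. each $\beta_j<\infty$. Since the underlying chain DG algebra $R=\mathcal{E}'$ is precisely the local chain DG algebra of \cite{FJ}, existence of a minimal semi-free resolution is already available; the real work lies in the finiteness and the degree bound. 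As a preliminary reduction I would observe that the hypotheses force each $H^i(X)$ to be finite-dimensional over $k$, since it is finitely generated over $H^0(\mathcal{E}')$ and $\dim_kH^0(\mathcal{E}')<\infty$ by Lemma \ref{local}(1); together with $u<\infty$ this says $H(X)$ is bounded above with finite-dimensional components. Because $\mathcal{E}'$ is concentrated in non-positive degrees and $\mathcal{E}'/\mathfrak{m}_{\mathcal{E}'}\cong k$, the DG Nakayama Lemma \ref{naklem} applies to bounded-above DG $\mathcal{E}'$-modules, which is the tool that lets minimal generating sets detect everything.

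Next I would carry out the construction starting from the top. Choose cocycles in $X$ whose classes form a minimal $H^0(\mathcal{E}')$-generating set of the top cohomology $H^u(X)$, and let $F(0)$ be the DG free module on these generators, placed in degree $u$, equipped with the evident map $F(0)\to X$; this is an isomorphism on $H^u$. Inductively, having built a minimal semi-free submodule agreeing with $X$ on cohomology in degrees $>i$, I would adjoin a DG free layer with generators in degree $i$ chosen so as to (a) hit a minimal generating set of the cohomology still unaccounted for in degree $i$ and (b) kill the spurious cycles created one degree above, always lifting so that the new differentials land in $\mathfrak{m}_{\mathcal{E}'}F$, which preserves minimality $\partial_F(F)\subseteq \mathfrak{m}_{\mathcal{E}'}F$. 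Because $\mathcal{E}'$ lives in degrees $\le 0$, the generators produced this way have degrees $\le u$, so $F_X^{\#}$ acquires the claimed shape $\coprod_{j\le u}\Sigma^{j}(\mathcal{E}'^{\#})^{(\beta_j)}$.

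Finally I would pin down finiteness, which is where the \textbf{main obstacle} lies. Minimality makes the differential of $k\otimes_{\mathcal{E}'}F_X$ vanish, so $k\otimes_{\mathcal{E}'}F_X\cong\coprod_{j}\Sigma^{j}k^{(\beta_j)}$ and hence $\beta_j=\dim_k H^{-j}(k{}^L\otimes_{\mathcal{E}'}X)$; it therefore suffices to show that $k{}^L\otimes_{\mathcal{E}'}X$ is degreewise finite-dimensional and concentrated in degrees $\le u$. The difficulty is controlling minimality and degreewise finiteness \emph{simultaneously} in the local (non-connected) setting: because $H^0(\mathcal{E}')$ is a finite-dimensional local algebra rather than $k$, at each stage one must select minimal generating sets over $H^0(\mathcal{E}')$ and invoke Nakayama there, and then argue that finitely many generators suffice in each degree. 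I would close this via a standard Eilenberg--Moore type spectral sequence whose $E_2$-page is assembled from $\mathrm{Tor}$ over $H(\mathcal{E}')$ of the bounded-above module $H(X)$: since $\dim_kH(\mathcal{E}')<\infty$ by Lemma \ref{local}(1) and each $H^i(X)$ is finite-dimensional with $i\le u$, every $E_2$-term is finite-dimensional and vanishes above degree $u$, the spectral sequence converges, and we conclude $\dim_k H^i(k{}^L\otimes_{\mathcal{E}'}X)<\infty$ with $H^i=0$ for $i>u$. Thus each $\beta_j<\infty$, and the finite-dimensionality of both $H^0(\mathcal{E}')$ and the $H^i(X)$ supplied by the hypotheses is exactly what makes the construction terminate with finite data in each degree.
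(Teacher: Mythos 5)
Your argument is correct in substance, but it takes a genuinely different route from the paper. The paper's proof is a two-line reduction: setting $M_{-i}=X^i$ turns $X$ into a DG module over the local chain DG algebra $R$ obtained by reindexing $\mathcal{E}'$, with $H(M)$ bounded below and each $H_i(M)$ finitely generated over the finite-dimensional local algebra $H_0(R)$; the existence of a minimal semi-free resolution with finite, bounded-below Betti numbers is then exactly \cite[0.5]{FJ}, and one reindexes back. You instead reprove that result from scratch: a top-down induction adjoining DG free layers whose differentials land in $\mathfrak{m}_{\mathcal{E}'}F$, followed by the identification $\beta_j=\dim_kH^{-j}(k\,{}^L\otimes_{\mathcal{E}'}X)$ forced by minimality, with finiteness extracted from an Eilenberg--Moore spectral sequence with $E_2=\mathrm{Tor}^{H(\mathcal{E}')}(k,H(X))$. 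This works --- since $H(\mathcal{E}')$ is finite-dimensional and concentrated in degrees $[b,0]$ while $H(X)$ is degreewise finite and bounded above by $u$, each total degree of the $E_2$-page receives only finitely many finite-dimensional contributions, so convergence is unproblematic even though $H(X)$ may be unbounded below --- and it has the virtue of being self-contained. What it costs is precisely the care you flag: the DG Nakayama argument over $\mathcal{E}'$ is more delicate than the connected case of Lemma \ref{naklem}, because $\mathfrak{m}_{\mathcal{E}'}$ has a nontrivial degree-zero part $B^0\oplus J$, so one must combine the degree filtration with ordinary Nakayama over the Artinian local ring $H^0(\mathcal{E}')$ applied to the finitely generated cohomology modules at each stage; this is exactly the content that the citation of \cite[0.5]{FJ} lets the paper avoid writing out.
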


\begin{proof}
Let $M=\bigoplus\limits_{j\in \Bbb{Z}} M_j$ with $M_{-i}=X^i$ for any $i\in \Bbb{Z}$. Then $M$ is DG $R$-module such that each $H_i(M)$ is a finitely generate $H_0(R)$-module. And $H(M)$ is bounded below with $-u=\inf\{i|H_i(M)\neq 0\}$. It follows from \cite[0.5]{FJ} that $M$ admits a minimal semi-free resolution $G$ such that
$$G^{\#}=\coprod\limits_{i\ge -u}\Sigma^i(R^{\#})^{(\beta_i)},$$ where each $\beta_i$ is finite. Let $F^i=G_{-i}$. Then $F$ is a minimal semi-free $\mathcal{E}'$-module with
$$F^{\#}=\coprod\limits_{j\le u}\Sigma^j(\mathcal{E}'^{\#})^{\beta_j}.$$ Moreover, it is a minimal semi-free resolution of $X$.
\end{proof}

\begin{prop}\label{semifree}
Let $N$ be a DG $\mathcal{E}$-module such that $u=\sup\{i|H^i(X)\neq 0\}<\infty$ and each $H^i(N)$ is a finitely generated $H^0(\mathcal{E})$-module.
Then $N$ admits a minimal semi-free resolution $F$ such that $F^{\#}=\coprod\limits_{j\le u}\Sigma^j(\mathcal{E}^{\#})^{\beta_j}$,  where each $\beta_j$ is finite.
\end{prop}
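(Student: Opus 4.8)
The plan is to transfer the existence of minimal semi-free resolutions from $\mathcal{E}'$ to $\mathcal{E}$ along the quasi-isomorphism $\iota:\mathcal{E}'\to\mathcal{E}$, reducing everything to Proposition \ref{minres}. Since $\iota$ is a quasi-isomorphism of DG algebras, restriction of scalars $\iota^{*}$ and extension of scalars $\mathcal{E}\otimes_{\mathcal{E}'}(-)$ induce mutually inverse equivalences between $\mathscr{D}(\mathcal{E}')$ and $\mathscr{D}(\mathcal{E})$, so the strategy is to build a minimal semi-free $\mathcal{E}'$-resolution of the restriction of $N$ and then apply $\mathcal{E}\otimes_{\mathcal{E}'}(-)$ to it.

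First I would regard $N$ as a DG $\mathcal{E}'$-module via $\iota$. Restriction does not change the underlying cohomology, and since $H^{0}(\mathcal{E}')\cong H^{0}(\mathcal{E})$ (both equal $Z^{0}/B^{0}$), each $H^{i}(N)$ remains a finitely generated $H^{0}(\mathcal{E}')$-module and $\sup\{i\mid H^{i}(N)\neq 0\}=u$. Proposition \ref{minres} then yields a minimal semi-free resolution $\varepsilon\colon G\stackrel{\simeq}{\to} N$ of DG $\mathcal{E}'$-modules with $G^{\#}=\coprod_{j\le u}\Sigma^{j}(\mathcal{E}'^{\#})^{(\beta_{j})}$, each $\beta_{j}$ finite. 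Setting $F=\mathcal{E}\otimes_{\mathcal{E}'}G$, the module $F$ is semi-free over $\mathcal{E}$, and since $(\mathcal{E}\otimes_{\mathcal{E}'}G)^{\#}=\mathcal{E}^{\#}\otimes_{\mathcal{E}'^{\#}}G^{\#}$, one gets $F^{\#}=\coprod_{j\le u}\Sigma^{j}(\mathcal{E}^{\#})^{(\beta_{j})}$ with the desired finite multiplicities.

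Two points then remain. To see that the multiplication map $\mu\colon F=\mathcal{E}\otimes_{\mathcal{E}'}G\to N$, $e\otimes g\mapsto e\cdot\varepsilon(g)$, is a quasi-isomorphism of DG $\mathcal{E}$-modules, I would use that $G$ is semi-free, hence K-flat, over $\mathcal{E}'$, so the functor $-\otimes_{\mathcal{E}'}G$ sends the quasi-isomorphism $\iota$ to a quasi-isomorphism $\iota\otimes\mathrm{id}_{G}\colon G\cong\mathcal{E}'\otimes_{\mathcal{E}'}G\stackrel{\simeq}{\to}\mathcal{E}\otimes_{\mathcal{E}'}G=F$; since the composite $\mu\circ(\iota\otimes\mathrm{id}_{G})$ is exactly $\varepsilon$, the two-out-of-three property forces $\mu$ to be a quasi-isomorphism. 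The main subtle point is to verify that $F$ is minimal, i.e. $\partial_{F}(F)\subseteq\mathfrak{m}_{\mathcal{E}}F$. For this I would note that $\partial_{F}(e\otimes g)=\partial_{\mathcal{E}}(e)\otimes g+(-1)^{|e|}e\otimes\partial_{G}(g)$; the first summand lies in $\mathfrak{m}_{\mathcal{E}}F$ because every coboundary of $\mathcal{E}$ lies in $\mathfrak{m}_{\mathcal{E}}$ (in nonzero degrees $\mathfrak{m}_{\mathcal{E}}$ is all of $\mathcal{E}$, and $B^{0}\subseteq\mathfrak{m}_{\mathcal{E}}^{0}$), while the second lies in $\mathfrak{m}_{\mathcal{E}}F$ because $\partial_{G}(g)\in\mathfrak{m}_{\mathcal{E}'}G$ by minimality of $G$ and $\mathfrak{m}_{\mathcal{E}'}\subseteq\mathfrak{m}_{\mathcal{E}}$. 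Equivalently, $k\otimes_{\mathcal{E}}F\cong k\otimes_{\mathcal{E}'}G$ has vanishing differential. This exhibits $F$ as the required minimal semi-free resolution of $N$ over $\mathcal{E}$.
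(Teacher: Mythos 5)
Your proposal is correct and follows essentially the same route as the paper: restrict $N$ to a DG $\mathcal{E}'$-module along the quasi-isomorphism $\iota$, invoke Proposition \ref{minres} to get a minimal semi-free resolution $G$ over $\mathcal{E}'$, and base-change to $F=\mathcal{E}\otimes_{\mathcal{E}'}G$. The paper simply asserts that "one sees easily" that $F$ is a minimal semi-free resolution of ${}_{\mathcal{E}}N$, whereas you supply the verification (K-flatness of $G$ plus two-out-of-three for the quasi-isomorphism, and the containment $\partial_F(F)\subseteq\mathfrak{m}_{\mathcal{E}}F$ for minimality), which is a welcome amplification of the same argument.
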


\begin{proof}
Via the inclusion morphism $\iota: \mathcal{E}'\to \mathcal{E}$, $N$ can be considered as a DG $\mathcal{E}'$-module. By Proposition \ref{minres}, ${}_{\mathcal{E}'}N$ admits a minimal semi-free resolution $G$ such that $$G^{\#}=\coprod\limits_{j\le u}\Sigma^j(\mathcal{E}'^{\#})^{\beta_j},$$  where each $\beta_j$ is finite. One sees easily that $F=\mathcal{E}\otimes_{\mathcal{E}'}G$ is a minimal semi-free resolution of ${}_{\mathcal{E}}N$ and
$$F^{\#}=\coprod\limits_{j\le u}\Sigma^j(\mathcal{E}^{\#})^{\beta_j},$$ where each $\beta_j$ is finite.
\end{proof}

 The DG module $K$ acquires the structure ${}_{\mathscr{A},{}_{\mathcal{E}}}K$ while $K^*=\Hom_{\mathscr{A}}(K,\mathscr{A})$ has the structure $K^*_{\mathscr{A},\mathcal{E}}$. Define functors \,\, $T(-)=-\,{}^L{\otimes}_{\mathcal{E}}K$,
$$W(-)=\Hom_{\mathscr{A}}(K,-)\simeq K^*\,{}^L{\otimes}_{\mathscr{A}}- \quad \text{and}\quad
C(-)=R\Hom_{\mathcal{E}^{op}}(K^*,-),$$
which form adjoint pairs $(T,W)$ and $(W,C)$ between $\mathscr{D}(\mathcal{E}^{op})$ and $\mathscr{D}(\mathscr{A})$.
There are pairs of quasi-inverse equivalences of categories as follows
$$\xymatrix{\mathscr{D}^{\mathrm{comp}}(\mathscr{A})   \ar@<1ex>[r]^{\quad W}
& \mathscr{D}(\mathcal{E}^{op}) \ar@<1ex>[l]^{\quad C}  \ar@<1ex>[r]^{T} & \mathscr{D}^{\mathrm{tors}}(\mathscr{A}) \ar@<1ex>[l]^{W} } .$$
In particular, $WC$ and $WT$ are equivalent to the identity functor on $\mathscr{D}(\mathcal{E}^{op})$ , so if we set $$\Gamma =TW, \Lambda=CW,$$
then we get endofunctors of $\mathscr{D}(\mathscr{A})$ which form an adjoint pair $(\Gamma, \Lambda)$ and satisfy
$$\Gamma^2\simeq \Gamma, \Lambda^2\simeq \Lambda, \Gamma\Lambda\simeq \Gamma, \Lambda\Gamma\simeq \Lambda.$$
These functors are adjoints of inclusions as follows, where left-adjoint are displayed above right-adjoints
$$\xymatrix{\mathscr{D}^{\mathrm{comp}}(\mathscr{A})   \ar@<1ex>[r]^{\quad \mathrm{inc}}
& \mathscr{D}(\mathscr{A}) \ar@<1ex>[l]^{\quad \Lambda}  \ar@<1ex>[r]^{\Gamma} & \mathscr{D}^{\mathrm{tors}}(\mathscr{A}) \ar@<1ex>[l]^{ \mathrm{inc }} } .$$
Write $Q=K^*{}^L\otimes_{\mathcal{E}}K$ and $D=Q^{\vee}=\Hom_{k}(Q,k)$. One sees that $Q$ and $D$ have the structures ${}_{\mathscr{A}}Q_{\mathscr{A}}$ and ${}_{\mathscr{A}}D_{\mathscr{A}}$,  respectively.  From the definitions, we have $$\Gamma(-)=Q\,{}^L{\otimes}_{\mathscr{A}}-\quad \text{and} \quad \Lambda(-)=R\Hom_{\mathscr{A}}(Q,-).$$
The following definition was introduced in \cite[Definition 5.1]{Jor2}.
\begin{defn}{\rm
For any DG $\mathscr{A}$-module $M$, its Castelnuovo-Mumford regularity is defined by
$$\mathrm{CMreg}M=\sup\{i|H^i(\Gamma (M))\neq 0\}.$$
Note that $ \mathrm{CMreg}(0)=-\infty$.}
\end{defn}
\begin{defn}\cite{FHT1}\label{gordef}{\rm
Let $\mathscr{A}$ be a connected cochain DG algebra.
If $$\dim_kH(R\Hom_{\mathscr{A}}(k,\mathscr{A})) =1,\,\,(\text{resp.} \dim_{k}H(R\Hom_{\mathscr{A}^{op}}(k,\mathscr{A}))=1),$$ then $\mathscr{A}$ is called left (resp. right) Gorenstein.   If $\mathscr{A}$ is both left Gorenstein and right Gorenstein, then we say that $\mathscr{A}$ is Gorenstein. }
\end{defn}
\begin{rem}
Assume that $\mathscr{A}$ is a left Gorenstein DG algebra.  Then we have
$k.\mathrm{id}_{\mathscr{A}}\mathscr{A}=\mathrm{depth}_{\mathscr{A}}\mathscr{A}$ since $\dim_kH(R\Hom_{\mathscr{A}}(k,\mathscr{A}))=1$. By the way, the invariant $k.\mathrm{id}_{\mathscr{A}}\mathscr{A}$ is called `formal dimension' of $\mathscr{A}$ in \cite{Gam}. Although a left Gorenstein DG algebra is not necessarily right Gorenstein in noncommutative setting. For any homologically smooth DG algebra $\mathscr{A}$, it is left Gorenstein if and only if it is right Gorenstein by \cite[Remark 7.6]{MW2}.
\end{rem}

\begin{prop}\label{eqfd}
 Suppose that $\mathscr{A}$ is a homologically smooth connected cochain DG algebra. If $\mathscr{A}$ is Gorenstein, then $\mathrm{depth}_{\mathscr{A}}\mathscr{A}=\mathrm{depth}_{\mathscr{A}^{op}}\mathscr{A}$.
\end{prop}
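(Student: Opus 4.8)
The plan is to deduce the equality from the duality $R\Hom_{\mathscr{A}}(-,\mathscr{A})\colon\mathscr{D}^c(\mathscr{A})\to\mathscr{D}^c(\mathscr{A}^{op})$, combined with the observation that the left Gorenstein condition pins down $R\Hom_{\mathscr{A}}(k,\mathscr{A})$ up to a single suspension of $k$. Since $\mathscr{A}$ is homologically smooth, Lemma \ref{homsmooth} gives ${}_{\mathscr{A}}k\in\mathscr{D}^c(\mathscr{A})$, so $P:=R\Hom_{\mathscr{A}}(k,\mathscr{A})$ is an object of $\mathscr{D}^c(\mathscr{A}^{op})$, and the quasi-inverse equivalences of \cite[Proposition 4.6]{MW2} yield $R\Hom_{\mathscr{A}^{op}}(P,\mathscr{A})\simeq k$ in $\mathscr{D}(\mathscr{A})$. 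The left Gorenstein hypothesis says $\dim_k H(P)=1$; writing $s=\mathrm{depth}_{\mathscr{A}}\mathscr{A}$, this forces $H(P)$ to be concentrated in the single degree $s$ (so that $s=k.\mathrm{id}_{\mathscr{A}}\mathscr{A}$ as recorded in the Remark).

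The technical heart, and the step I expect to be the main obstacle, is a lemma asserting that any DG $\mathscr{A}^{op}$-module $N$ with $\dim_k H(N)=1$, with $H(N)$ living in degree $s$, is isomorphic in $\mathscr{D}(\mathscr{A}^{op})$ to $\Sigma^{-s}k$. I would prove this through the minimal semi-free resolution $F\xrightarrow{\simeq}N$ supplied by Lemma \ref{exist}, whose underlying graded module is $\coprod_{i\ge s}\Sigma^{-i}(\mathscr{A}^{\#})^{(\Lambda^i)}$. Because $\mathscr{A}^{<0}=0$ and $\mathscr{A}^0=k$, the degree-$s$ part $F^s$ is spanned by the degree-$s$ semi-basis elements, which are automatically cocycles; since $F^{s-1}=0$ we get $H^s(F)=F^s$, and one-dimensionality of $H(N)$ forces a unique generator $e_0$ in degree $s$ with $H^s(F)=k[e_0]$. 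I would then define the $\mathscr{A}^{op}$-linear degree-$0$ map $\psi\colon F\to\Sigma^{-s}k$ sending $e_0$ to a generator and every higher-degree semi-basis element to $0$; equivalently $\psi$ factors through $F/\mathfrak{m}F$. The crucial point is that $\psi$ is a morphism of DG modules: for any higher generator $e$ the element $\partial_F(e)$ lies in $\mathfrak{m}F$, and its only $\psi$-surviving component $e_0 a$ has $a\in\mathscr{A}^{|e|-s}\subseteq\mathfrak{m}$, so $\psi(\partial_F e)=\psi(e_0)\,\epsilon(a)=0$ by minimality. As $\psi$ is an isomorphism on $H^s$ and both sides vanish in other degrees, $\psi$ is a quasi-isomorphism, whence $N\simeq F\simeq\Sigma^{-s}k$. (Conceptually this reflects the degree-forced vanishing of all higher $A_\infty$-products on the one-dimensional $H(N)$, but the resolution argument sidesteps that machinery.)

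Applying this lemma to $P$ gives $R\Hom_{\mathscr{A}}(k,\mathscr{A})\simeq\Sigma^{-s}k$ in $\mathscr{D}(\mathscr{A}^{op})$. Finally I would run the duality backwards: using the suspension rule $R\Hom_{\mathscr{A}^{op}}(\Sigma^{-s}k,\mathscr{A})=\Sigma^{s}R\Hom_{\mathscr{A}^{op}}(k,\mathscr{A})$ together with $R\Hom_{\mathscr{A}^{op}}(P,\mathscr{A})\simeq k$, I obtain
$$\Sigma^{s}R\Hom_{\mathscr{A}^{op}}(k,\mathscr{A})\simeq R\Hom_{\mathscr{A}^{op}}(\Sigma^{-s}k,\mathscr{A})\simeq R\Hom_{\mathscr{A}^{op}}(P,\mathscr{A})\simeq k,$$
so $R\Hom_{\mathscr{A}^{op}}(k,\mathscr{A})\simeq\Sigma^{-s}k$. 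Reading off cohomological degrees then gives $\mathrm{depth}_{\mathscr{A}^{op}}\mathscr{A}=\inf\{j\mid H^j(R\Hom_{\mathscr{A}^{op}}(k,\mathscr{A}))\neq 0\}=s=\mathrm{depth}_{\mathscr{A}}\mathscr{A}$, as desired. Note that this argument uses only left Gorensteinness and, as a byproduct, reproves that $\mathscr{A}$ is then right Gorenstein.
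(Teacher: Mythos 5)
Your argument is correct and follows essentially the same route as the paper: both identify $R\Hom_{\mathscr{A}}(k,\mathscr{A})$ with $\Sigma^{-s}k_{\mathscr{A}}$ in $\mathscr{D}(\mathscr{A}^{op})$ and then apply the biduality quasi-isomorphism $k\stackrel{\simeq}{\to}R\Hom_{\mathscr{A}^{op}}(R\Hom_{\mathscr{A}}(k,\mathscr{A}),\mathscr{A})$ of \cite[Proposition 4.6]{MW2} to read off the depth on the other side. The only differences are that you prove the rigidity statement (a DG module with one-dimensional cohomology is a shift of $k$) directly from the minimal semi-free resolution where the paper cites \cite[Lemma 2.7]{MGYC}, and that you use only left Gorensteinness, recovering the right Gorenstein condition as a byproduct.
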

\begin{proof}
Let $\mathrm{depth}_{\mathscr{A}}\mathscr{A}=m$ and $\mathrm{depth}_{\mathscr{A}^{op}}\mathscr{A}=n$. Then $H(R\Hom_{\mathscr{A}}(k,\mathscr{A}))\cong \Sigma^{-m}k$ and
and $H(R\Hom_{\mathscr{A}^{op}}(k,\mathscr{A}))\cong \Sigma^{-n}k$. By \cite[Lemma 2.7]{MGYC}, $R\Hom_{\mathscr{A}}(k,\mathscr{A})\simeq \Sigma^{-m}k_{\mathscr{A}}$ in $\mathscr{D}(\mathscr{A}^{op})$ and $R\Hom_{\mathscr{A}^{op}}(k,\mathscr{A})\cong \Sigma^{-n}{}_{\mathscr{A}}k$ in $\mathscr{D}(\mathscr{A})$. Since ${}_{\mathscr{A}}k$ is compact, the biduality morphism
$$k\to R\Hom_{\mathscr{A}^{op}}(R\Hom_{\mathscr{A}}(k,\mathscr{A}),\mathscr{A})$$ is a quasi-isomorphism by \cite[Proposition 4.6]{MW2}. On the other hand,
\begin{align*}
 H(R\Hom_{\mathscr{A}^{op}}(R\Hom_{\mathscr{A}}(k,\mathscr{A}),\mathscr{A}))&\cong H(R\Hom_{\mathscr{A}^{op}}(\Sigma^{-m}k_{\mathscr{A}},\mathscr{A}))\\
                                                      &\cong \Sigma^{m-n} k.
\end{align*}
Thus $m=n$.
\end{proof}

\begin{thm}\label{formula}
Let $\mathscr{A}$ be a Gorenstein and homologically smooth connected cochain DG algebra such that $H(\mathscr{A})$ is a Noetherian graded algebra. Then for any object $M$ in $\mathscr{D}^f(\mathscr{A})$, we have
$$\mathrm{CMreg}M = \mathrm{depth}_{\mathscr{A}}\mathscr{A} + \mathrm{Ext.reg}\, M<\infty.$$
\end{thm}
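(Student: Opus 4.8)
The plan is to factor the asserted equality through the $k$-injective dimension, establishing the two identities
$$\mathrm{CMreg}\,M=k.\mathrm{id}_{\mathscr{A}}M\qquad\text{and}\qquad k.\mathrm{id}_{\mathscr{A}}M=\mathrm{depth}_{\mathscr{A}}\mathscr{A}+\mathrm{Ext.reg}\,M,$$
where the first requires only homological smoothness while the second is exactly where the Gorenstein hypothesis enters. Throughout, Theorem \ref{mainres} gives $\mathscr{D}_{fg}(\mathscr{A})=\mathscr{D}^c(\mathscr{A})$, so $M$ is compact; in particular $W(M)=R\Hom_{\mathscr{A}}(k,M)$ has bounded finite-dimensional cohomology, and $\mathrm{Ext.reg}\,M=\sup\{|e|\mid e\in E\}<\infty$ for the semi-basis $E$ of a minimal semi-free resolution $F_M$ of $M$.

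For the first identity I would exploit the factorization $\Gamma=TW$, with $W(-)=R\Hom_{\mathscr{A}}(K,-)$ and $T(-)=-\,{}^L{\otimes}_{\mathcal{E}}K$, together with the fact that $T\colon\mathscr{D}(\mathcal{E}^{op})\to\mathscr{D}^{\mathrm{tors}}(\mathscr{A})$ is an equivalence. The key reduction is that $T$ preserves the top cohomological degree, namely $\sup\{i\mid H^i(T(N))\neq0\}=\sup\{i\mid H^i(N)\neq0\}$ for every $N\in\mathscr{D}(\mathcal{E}^{op})$ with bounded finite-dimensional cohomology; applied to $N=W(M)$ this yields $\mathrm{CMreg}\,M=\sup\{i\mid H^i(\Gamma M)\neq0\}=\sup\{i\mid H^i(W(M))\neq0\}=k.\mathrm{id}_{\mathscr{A}}M$. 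To prove the degree statement I would put $u=\sup\{i\mid H^i(N)\neq0\}$, take the minimal semi-free resolution of $N$ over $\mathcal{E}^{op}$ supplied by the $\mathcal{E}^{op}$-analogue of Proposition \ref{semifree} (whose free generators lie in degrees $\le u$, with a non-zero contribution in degree $u$), and tensor with $K$. Since $\sup\{i\mid H^i(\mathcal{E})\neq0\}=0$ by Lemma \ref{local}(2) and $\sup\{i\mid H^i(K)\neq0\}=0$ because $K\simeq{}_{\mathscr{A}}k$, no cohomology can be produced above degree $u$, while the degree-$u$ generator survives by minimality.

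This non-cancellation at the top is the main obstacle. The minimal $\mathcal{E}^{op}$-resolution of $N$ is in general infinite (already for $N$ the residue module, as $\mathcal{E}$ is a non-regular local DG algebra), so there is genuine cancellation in all degrees below $u$, and the argument must isolate why the extremal class is untouched. The bounded-above cohomology of $\mathcal{E}$ recorded in Lemma \ref{local}(2) is precisely what confines every contribution to degrees $\le u$, and minimality over the local chain DG algebra $R$ of \cite{FJ} is what prevents the top from being a boundary.

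For the second identity I invoke the Gorenstein hypothesis. By Proposition \ref{eqfd} and its proof, together with \cite[Lemma 2.7]{MGYC}, one has $R\Hom_{\mathscr{A}}(k,\mathscr{A})\simeq\Sigma^{-m}k_{\mathscr{A}}$ in $\mathscr{D}(\mathscr{A}^{op})$, where $m=\mathrm{depth}_{\mathscr{A}}\mathscr{A}$. As ${}_{\mathscr{A}}k$ is compact by Lemma \ref{homsmooth}, the natural evaluation map $R\Hom_{\mathscr{A}}(k,\mathscr{A})\,{}^L{\otimes}_{\mathscr{A}}M\to R\Hom_{\mathscr{A}}(k,M)$ is an isomorphism, whence $R\Hom_{\mathscr{A}}(k,M)\simeq\Sigma^{-m}(k\,{}^L{\otimes}_{\mathscr{A}}M)$ and therefore $k.\mathrm{id}_{\mathscr{A}}M=m+\sup\{i\mid H^i(k\,{}^L{\otimes}_{\mathscr{A}}M)\neq0\}$. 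Computing $k\,{}^L{\otimes}_{\mathscr{A}}M=k\otimes_{\mathscr{A}}F_M$ with the minimal resolution $F_M$, the differential vanishes, so $H^i(k\,{}^L{\otimes}_{\mathscr{A}}M)=\bigoplus_{|e|=i}k$ and its supremum equals $\sup\{|e|\mid e\in E\}=\mathrm{Ext.reg}\,M$. Combining the two identities gives $\mathrm{CMreg}\,M=\mathrm{depth}_{\mathscr{A}}\mathscr{A}+\mathrm{Ext.reg}\,M$, which is finite since each summand is.
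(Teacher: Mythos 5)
Your second identity, $k.\mathrm{id}_{\mathscr{A}}M=\mathrm{depth}_{\mathscr{A}}\mathscr{A}+\mathrm{Ext.reg}\,M$, is correct and cleanly argued: the Gorenstein hypothesis gives $R\Hom_{\mathscr{A}}(k,\mathscr{A})\simeq\Sigma^{-m}k_{\mathscr{A}}$, tensor--evaluation applies because ${}_{\mathscr{A}}k$ is compact, and minimality of $F_M$ kills the differential of $k\otimes_{\mathscr{A}}F_M$. The genuine gap is in your first identity, $\mathrm{CMreg}\,M=k.\mathrm{id}_{\mathscr{A}}M$, i.e.\ the claim that $T(-)=-\,{}^L\otimes_{\mathcal{E}}K$ preserves the top cohomological degree. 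You correctly observe that $P_N\otimes_{\mathcal{E}}K$ is not minimal and that the whole difficulty is why the extremal class is not cancelled, but ``minimality over the local chain DG algebra $R$ prevents the top from being a boundary'' is a restatement of the assertion, not an argument. What actually has to be checked is: with $u'=\sup\{i\mid H^i(N)\neq 0\}$, the level-$p$ generators of the minimal $\mathcal{E}^{op}$-resolution $P_N$ lie in cohomological degrees $\le u'-p$ (a degree-drop argument over $R$), so the only differential that can reach the degree-$u'$ classes of $H\bigl(\mathrm{gr}^0P_N\otimes_{\mathcal{E}}K\bigr)$ is the one induced by the level-$1$ part of $\partial_{P_N}$, which is right multiplication by coefficients $a_j\in\mathfrak{m}_{\mathcal{E}}$ in $\partial_{P_N}(e)=\sum e_ja_j$; since $\partial_{P_N}^2=0$ forces these $a_j$ to be cocycles and the augmentation ideal of $H(\mathcal{E})$ annihilates $H(K)=k$, this differential vanishes and the top class survives. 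None of this is in your write-up, and without it the first identity --- and hence the theorem --- is unproved.

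For comparison, the paper sidesteps the non-cancellation problem entirely: it substitutes the Gorenstein identification $K^*\simeq\Sigma^{-t}k_{\varepsilon,\mathscr{A}}$ directly into $\Gamma(M)=(K^*\,{}^L\otimes_{\mathcal{E}}K)\,{}^L\otimes_{\mathscr{A}}M$, replaces $K$ by its minimal semi-free resolution $P$ over $\mathcal{E}$ (Proposition \ref{semifree}) and $M$ by its minimal semi-free resolution $F$, and notes that $(\Sigma^{-t}k\otimes_{\mathcal{E}}P)\otimes_{\mathscr{A}}F$ has identically zero differential by minimality on both sides, so $\sup=t+u$ is read off with no cancellation question anywhere. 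Your factorization through $k.\mathrm{id}_{\mathscr{A}}M$ is more structural and would isolate a statement ($\Gamma$ preserves top degree) valid without the Gorenstein hypothesis, but it is strictly harder; to make it a proof you must supply the filtration/spectral-sequence argument sketched above.
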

\begin{proof}
By Theorem \ref{mainres}, we have $M\in \mathscr{D}^c(\mathscr{A})$. Then $M$ admits a minimal semi-free resolution $F$ with a finite semi-basis $E$.  By the minimality of $F$, $$H(R\Hom_{\mathscr{A}}(M,k))=\Hom_{\mathscr{A}}(F,k)\cong \bigoplus\limits_{e\in E}ke.$$
One sees clearly that $$\mathrm{Ext.reg}\, M= -\inf\{i|H^i(R\Hom_{\mathscr{A}}(M,k))\neq 0\}=\sup\{|e|\,\, |\,\,e\in E\}<\infty.$$
Let $b=\inf\{i\in\Bbb{Z}|H^i(M)\neq 0\}$, $u=\mathrm{Ext.reg}\, M$ and $t=\mathrm{depth}_{\mathscr{A}}\mathscr{A}$. Then $$K^*=\Hom_{\mathscr{A}}(K,\mathscr{A})\cong \Sigma^{-t}k_{\varepsilon,\mathscr{A}}$$ in $\mathscr{D}(\mathscr{A}^{op})$ and $$F^{\#}=\coprod\limits_{b\le j\le u}\Sigma^{-j}(\mathscr{A}^{\#})^{(\alpha_j)},$$
where each $\alpha_j$ is finite. By Proposition \ref{semifree}, ${}_{\mathscr{E}}K$ admits a minimal semi-free resolution $P$ such that $P^{\#}=\coprod\limits_{j\le 0}\Sigma^j(\mathcal{E}^{\#})^{(\beta_j)}$,  where each $\beta_j$ is finite.
Therefore, \begin{align*}
\mathrm{CMreg}M &= \sup\{i\in \Bbb{Z}|H^i(\Gamma (M))\neq 0\}\\
&= \sup\{i\in \Bbb{Z}|H^i[(K^*{}^L\otimes_{\mathcal{E}}K)\otimes_{\mathscr{A}}F]\neq 0\}\\
&=\sup\{i\in \Bbb{Z}|H^i[(\Sigma^{-t}k_{\varepsilon,\mathscr{A}}\otimes_{\mathcal{E}}P)\otimes_{\mathscr{A}}F]\neq 0\}\\
&=\sup\{i\in \Bbb{Z}|H^i[(\Sigma^{-t}k_{\varepsilon,\mathscr{A}}\otimes_{\mathcal{E}}\coprod\limits_{j\le 0}\Sigma^j(\mathcal{E}^{\#})^{(\beta_j)} )\otimes_{\mathscr{A}}F]\neq 0\} \\
&=\sup\{i\in \Bbb{Z}|[(\coprod\limits_{j\le 0}\Sigma^{-t+j}k^{(\beta_j)})\otimes_{\mathscr{A}}\coprod\limits_{b\le q\le u}\Sigma^{-q}(\mathscr{A}^{\#})^{(\alpha_q)}]^i\neq 0\}\\
&=\sup\{i\in \Bbb{Z}|[(\coprod\limits_{j\le 0}\coprod\limits_{b\le q\le u}\Sigma^{-t+j-q}(k^{(\beta_j)})^{(\alpha_q)}]^i\neq 0\}\\
&= t+u= \mathrm{depth}_{\mathscr{A}}\mathscr{A}+ \mathrm{Ext.reg}\, M.
\end{align*}

\end{proof}
\begin{rem}
Note that a homologically smooth DG algebra is not necessarily Gorenstein. For example, the trivial DG free algebra $$\mathscr{A}=(k\langle x,y\rangle, 0)\quad \text{ with}\quad |x|=|y|=1$$ is homologically smooth but not Gorenstein (cf. \cite[Proposition 6.2]{MXYA}). Since there are Noetherian non AS-Gorenstein connected graded algebras with finite global dimension, one sees that homologically smooth DG algebras are not necessarily Gorenstein under the additional assumption that the cohomology graded algebra $H(\mathscr{A})$ is Noetherian.
\end{rem}

\section{some examples}
In this section, we list some homologically smooth and Gorenstein connected cochain DG algebras whose cohomology algebra is Noetherian.
\begin{ex}\label{ex1}
 Let $\mathscr{A}$ be a connected DG algebra such that $\mathscr{A}^{\#} = k\langle x,y\rangle/(xy+yx) $ with $|x|=|y|=1$ and its differential $\partial_{\mathscr{A}}$ is defined by $\partial_{\mathscr{A}}(x) = y^2$ and  $\partial_{\mathscr{A}}(y) = 0$.
 By \cite[Example 3.12]{MW2},  $\mathscr{A}$
is a homologically smooth and  Gorenstein DG algebra with
 $$H(\mathscr{A})\cong
 k[\lceil x\rceil^2, \lceil y\rceil]/(\lceil y\rceil^2).$$
\end{ex}

\begin{ex}\label{ex2}
Let $\mathscr{A}$ be the connected cochain DG algebra such that
$$ \mathscr{A}^{\#}= k \langle x,y\rangle/\left(\begin{array}{ccc}
                            x^2y-(\xi-1) xyx- \xi yx^2            \\
                             xy^2-(\xi-1) yxy-\xi y^2x           \\
                                                 \end {array}\right) $$
is the graded down-up algebra generated by degree $1$ elements $x,y$, and its differential $\partial_{\mathscr{A}}$ is defined by
 $\partial_{\mathscr{A}}(x)=y^2$ and $\partial_{\mathscr{A}}(y)=0$, where $\xi$ is a fixed primitive cubic root of unity.
 By \cite[Proposition 6.1]{MHLX}, $A$ is a Calabi-Yau DG algebra. So $\mathscr{A}$ is a homologically smooth and Gorenstein DG algebra.
 By \cite[Proposition 5.5]{MHLX}, $$H(\mathscr{A})=\frac{k\langle \lceil xy+yx\rceil, \lceil y\rceil\rangle}{\left(\begin{array}{ccc}
                                  \xi \lceil y\rceil \lceil xy+yx\rceil - \lceil xy+yx\rceil \lceil y\rceil \\
                                   \lceil y^2\rceil \\
                                    \end {array}\right)}. $$
\end{ex}

\begin{ex}\label{ex3}
Let $\mathscr{A}$ be the connected cochain DG algebra such that
$$ \mathscr{A}^{\#}= k \langle x,y\rangle/\left(\begin{array}{ccc}
                            x^2y- yx^2            \\
                             xy^2- y^2x           \\
                                                 \end {array}\right) $$
is the graded down-up algebra generated by degree $1$ elements $x,y$, and its differential $\partial_{\mathscr{A}}$ is defined by
 $\partial_{\mathscr{A}}(x)=y^2$ and $\partial_{\mathscr{A}}(y)=0$.
 By \cite{MHLX}, $\mathscr{A}$ is a Calabi-Yau DG algebra with $$H(\mathscr{A})= k[\lceil x^2\rceil,\lceil y\rceil,\lceil xy+yx\rceil]/(\lceil y\rceil^2).$$ Hence $\mathscr{A}$ is a homologically smooth and Gorenstein DG algebra.
 \end{ex}
For the three examples above, the corresponding DG algebras are homologically smooth and Gorenstein DG algebras whose cohomology algebras are Noetherian. We can apply Theorem \ref{mainres} and Theorem \ref{formula} to them.

\subsection*{Acknowledgments}
The author are grateful to Jiwei He, Xingting Wang and James Zhang for helpful discussions.\\
\textbf{Author Contributions} N/A \\
\textbf{Funding}\, The author was supported by NSFC (Grant No.11871326).\\
\textbf{Availability of data and materials}\, All data is available within the article.\\
\\
\begin{Large}\textbf{Declarations}\end{Large}\\
\\
\textbf{Ethical Approval} N/A\\
\textbf{Competing interests}\, The author has no competing interests to declare that are relevant to the content of this
article.


\def\refname{References}

\end{document}